\documentclass[11pt,reqno]{amsproc}
\usepackage[margin=1in]{geometry}
\usepackage{amsmath, amsthm, amssymb}
\usepackage{times, esint,stackrel}
\usepackage{enumitem, graphicx}
\usepackage{color}
\usepackage{enumitem}   
\usepackage{xcolor} 
\usepackage{mathrsfs}
\usepackage{cite}
\usepackage{mathabx}
\usepackage{stackengine}
\usepackage{subcaption,graphicx}
\usepackage{float}
\usepackage{hyperref}
 \usepackage[capitalise]{cleveref}
\usepackage{autonum}
\usepackage{bbm}
\usepackage{tcolorbox}

\crefformat{equation}{(#2#1#3)}
\crefrangeformat{equation}{(#3#1#4) to~(#5#2#6)}
\crefmultiformat{equation}{(#2#1#3)}%
{ and~(#2#1#3)}{, (#2#1#3)}{ and~(#2#1#3)}

\usepackage{tikz}
\usepackage{pgfplots}
\pgfplotsset{compat=newest}
\usetikzlibrary{decorations.pathreplacing,decorations.markings}
\tikzset{
	on each segment/.style={
		decorate,
		decoration={
			show path construction,
			moveto code={},
			lineto code={
				\path [#1]
				(\tikzinputsegmentfirst) -- (\tikzinputsegmentlast);
			},
			curveto code={
				\path [#1] (\tikzinputsegmentfirst)
				.. controls
				(\tikzinputsegmentsupporta) and (\tikzinputsegmentsupportb)
				..
				(\tikzinputsegmentlast);
			},
			closepath code={
				\path [#1]
				(\tikzinputsegmentfirst) -- (\tikzinputsegmentlast);
			},
		},
	},
	mid arrow/.style={postaction={decorate,decoration={
				markings,
				mark=at position .5 with {\arrow[#1]{stealth}}
	}}},
}


\usepackage{mathtools}

%


\makeatletter

\def\@tocline#1#2#3#4#5#6#7{\relax
  \ifnum #1>\c@tocdepth 
  \else
    \par \addpenalty\@secpenalty\addvspace{#2}%
    \begingroup \hyphenpenalty\@M
    \@ifempty{#4}{%
      \@tempdima\csname r@tocindent\number#1\endcsname\relax
    }{%
      \@tempdima#4\relax
    }%
    \parindent\z@ \leftskip#3\relax \advance\leftskip\@tempdima\relax
    \rightskip\@pnumwidth plus4em \parfillskip-\@pnumwidth
    #5\leavevmode\hskip-\@tempdima
      \ifcase #1
       \or\or \hskip 1em \or \hskip 2em \else \hskip 3em \fi%
      #6\nobreak\relax
    \dotfill\hbox to\@pnumwidth{\@tocpagenum{#7}}\par
    \nobreak
    \endgroup
  \fi}
\makeatother

\newcommand{\red}[1]{\textcolor{red}{#1}}

\def\dd{{\rm d}}

\def\eps{\varepsilon}
\def\e{{\rm e}} 
\def\de{{\partial}}

\def\RR {\mathbb{R}}
\def\TT {\mathbb{T}}
\def\ZZ {\mathbb{Z}}

\def\HH {\dot{H}^1}

\def\Re{{\rm Re}}
\def\Im{{\rm Im}}

\newcommand{\taninv}{\tan^{-1}}




\newcommand{\rmin}{\mathrm{in}}
\newcommand{\rmout}{\mathrm{out}}
\newcommand{\rmL}{\mathrm{L}}

\newcommand{\cM}{\mathcal{M}}

\newcommand{\cK}{\mathcal{K}}

\newcommand{\cR}{\mathcal{R}}

\newcommand{\cO}{\mathcal{O}}
\newcommand\smallO{
  \mathchoice
    {{\scriptstyle\mathcal{O}}}
    {{\scriptstyle\mathcal{O}}}
    {{\scriptscriptstyle\mathcal{O}}}
    {\scalebox{.7}{$\scriptscriptstyle\mathcal{O}$}}
  }



\newtheorem{proposition}{Proposition}[section]
\newtheorem{theorem}[proposition]{Theorem}
\newtheorem{corollary}[proposition]{Corollary}
\newtheorem{lemma}[proposition]{Lemma}

\theoremstyle{definition}
\newtheorem{definition}[proposition]{Definition}

\newtheorem{remark}[proposition]{Remark}

\theoremstyle{definition}

\numberwithin{equation}{section}


\title[Forced nonuniqueness in 2D Euler]{Self-similar instability and forced nonuniqueness: an application to the 2D Euler equations}
\author[M. Dolce]{Michele Dolce}
\address{Institute of Mathematics, EPFL, Station 8, 1015 Lausanne, Switzerland}
\email{michele.dolce@epfl.ch}

\author[G. Mescolini]{Giulia Mescolini}
\address{Institute of Mathematics, EPFL, Station 8, 1015 Lausanne, Switzerland}
\email{giulia.mescolini@epfl.ch}

\begin{document}

\begin{abstract}
Building on an approach introduced by Golovkin in the '60s, we show that nonuniqueness in some forced PDEs is a direct consequence of the existence of a self-similar linearly unstable eigenvalue: the key point is a clever choice of the forcing term removing complicated nonlinear interactions. We use this method to give a short and self-contained proof of nonuniqueness in 2D perfect fluids, first obtained in Vishik's groundbreaking result.
 In particular, we present a direct construction of a forced self-similar unstable vortex, where we treat perturbatively the self-similar operator  in a new and more quantitative way. 
\end{abstract}

\maketitle

\section{Introduction}
Many partial differential equations arising from the description of physical systems, such as the Euler, Navier-Stokes or Boussinesq equations, when seen in self-similar variables $(\xi,t)=(x/t^{\gamma},t)$ for some $\gamma>0$ give rise to a Cauchy problem of the form 
\begin{equation}
\label{eq:genpde}
    \begin{cases}
        t\de_t G=L(G)+B(G,G)+F, \qquad t\geq 0, \quad \xi\in \RR^d,\\
        G|_{t=0}=g_0.
        \end{cases}
\end{equation}
Here $G:[0,+\infty)\times \RR^d\to \RR^m$ is the unknown self-similar profile, $L:X\to \mathbb{R}^m$ is a linear operator with $X$ being a Banach space, $B:X\times X\to \mathbb{R}^m$ a bilinear one, and $F:[0,+\infty)\times \RR^d\to \RR^m$ is a forcing term. We claim that, by choosing appropriately the forcing term, it is straightforward to generate two solutions emanating from the same initial data $g_0$ if the linearization of the PDE at $g_0$ has an unstable eigenvalue. This  observation is a consequence of a nice trick used by Golovkin\footnote{He was a student of Lady\v{z}enskaja \cite{ladyzhenskaya1971kirill} passed away prematurely in the 1969. He was interested in the study of nonuniqueness of stationary states in fluids with given boundary conditions.} \cite{golovkin1964nonuniqueness,golovkin1965examples} and the self-similar non-uniqueness scenario proposed by Jia and \v{S}ver\'{a}k in the 3D Navier-Stokes equations \cite{jia2015incompressible}. The ideas of Golovkin were used in the example of nonuniqueness proposed by  Lady\v{z}enskaja  \cite{ladyvzenskaja1969example} and its interplay with an approximately self-similar scenario was explored for some shell-models in \cite{filonov2021nonuniqueness}. We will refer to this method as \textit{Golovkin's trick} and it can be explained as follows: we look for two solutions in the form
\begin{equation}
   \label{eq:gpm}
    G^\pm(\xi,t)=g_0(\xi)\pm \mathrm{Re}(t^\lambda \eta(\xi)),
\end{equation}
with $\eta:\mathbb{R}^d\to \mathbb{C}^m$ and $\lambda\in \mathbb{C}$ with $\Re(\lambda)>0$. This last assumption guarantees that $G^\pm|_{t=0}=g_0$ if $\eta$ is sufficiently regular. Then, we still have the freedom of choosing the forcing term $F$ and the equation satisfied by $\eta$. Plugging the ansatz \cref{eq:gpm} in \cref{eq:genpde}, we find that 
\begin{equation}
\mp\mathrm{Re}\left( t^\lambda(L(\eta)+B(g_0,\eta)+B(\eta,g_0)-\lambda \eta )\right)=L(g_0)+B(g_0,g_0)+B(\mathrm{Re}(t^\lambda\eta),\mathrm{Re}(t^\lambda\eta))+F.
\end{equation}
Hence, if we define 
\begin{align}
&F\coloneqq-L(g_0)-B(g_0,g_0)-B(\mathrm{Re}(t^\lambda\eta),\mathrm{Re}(t^\lambda\eta))\\
&\mathcal{L}_{g_0}(\eta)\coloneqq L(\eta)+B(g_0,\eta)+B(\eta,g_0),
\end{align}
we see that \cref{eq:gpm} solves \cref{eq:genpde} if there exists a couple $(\lambda,\eta)$ with $\Re(\lambda)>0$ and a function $\eta$ such that
\begin{equation}
    \mathcal{L}_{g_0}(\eta)=\lambda \eta.
\end{equation}
Namely, $\eta$ is an eigenfunction associated to any unstable eigenvalue of the operator $\mathcal{L}_{g_0}$. With this trick, which heavily relies on the fact that the nonlinearity is quadratic, one can always generate exactly two different solutions (if $\eta$ exists and it is non-trivial) with the force $F$ (that depends on $g_0$ and $\eta$). The two different solutions are \[
G^+(\xi,t)=g_0(\xi)+\Re(t^\lambda \eta(\xi)) \quad \text{and} \quad G^-(\xi,t)=g_0(\xi)-\Re(t^\lambda \eta(\xi)).\]

Notice that $g_0$ does not even need to solve the unforced PDE. However, it is desirable that the force $F$ satisfies at least some integrability properties, which are directly related to properties of  $g_0$ and the eigenfunction $\eta$. 
A natural question arises:
\begin{quote}
    \textbf{Question:} Under which conditions on $L,B$ one can always find a smooth profile $g_0$ so that the operator $\mathcal{L}_{g_0}$ has at least one unstable eigenvalue with a smooth eigenfunction?
\end{quote}
We require some (unspecified) smoothness to guarantee that the self-similar solution lives in the natural $L^p$ spaces where the existence of weak solutions is known. Finally, notice that the property of not having eigenvalues with positive real part for \emph{all} smooth $g_0$ seems to impose a quite rigid structure on the operators $L$ and $B$ (such as $L$ being a good enough dissipative operator  or $\mathcal{L}_{g_0}$ being always skew-adjoint for any $g_0$).



\subsection{Forcing the nonuniqueness in 2D perfect fluids}
\label{sec:vishik}
Spectacular examples of nonuniqueness with a force have been recently obtained for equations arising in fluid dynamics \cite{jia2015incompressible,vishik_notes,castro2024proof,albritton23Leray,albritton23Hypo,albritton22annals}. The PDEs involved are of the form \cref{eq:genpde} when seen in self-similar variables. In this paper, we only focus on the specific example of the 2D forced and incompressible Euler equations. In vorticity formulation they are given by
\begin{align}
\label{eq:Euler}
\begin{cases}\de_t \omega +u\cdot \nabla \omega=f, \qquad x\in \RR^2, \, t\in [0,T),\\
u=\nabla^\perp\psi, \quad \Delta \psi=\omega,\\
\omega|_{t=0}=\omega_0,
\end{cases}
\end{align}
where for our discussion $T>0$ will always be finite.
It is well known that for $\omega_0\in L^1_x\cap L^\infty_x$ and $f\in L^1_t(L^1_x\cap L^\infty_x)$, Yudovich's theory \cite{yudovich1963non} guarantees the existence and uniqueness of  weak solutions to \cref{eq:Euler}. The outstanding open problem of nonuniqueness \emph{without a force} with some integrable vorticity has been solved only recently by Brué, Colombo and Kumar \cite{brue2024flexibility}. This result relies on a new convex integration scheme that works for $\omega_0\in L^{1+\eps}_x(\TT^2)$ with $\eps$ sufficiently small. However, nonuniqueness with convex integration  can only be obtained  for $\omega_0 \in L^p_x(\TT^2)$ with $p<3/2$ in view of energy conservation for larger values of $p$. For general $p$ without a force, the problem is fully open and here we just mention two promising scenarios proposed by Bressan and Shen \cite{bressan21shen} (see also \cite{bressan20murray}) and Elgindi, Murray and Said \cite{elgindi2023wellposedness}. 
If one instead uses the extra degree of freedom given by a force, then it is possible to achieve a large range of $L^p_x$ spaces not covered by Yudovich's theory. This was first proved in Vishik's groundbreaking results  \cite{vishik2018instability1,vishik2018instability2}, establishing the nonuniqueness of solutions ``emanating'' from a force  $f\in L^\infty_t(L^1_x\cap L^p_x)$.  Vishik's result was later revisited and refined in \cite{vishik_notes}, see also \cite{castro2024proof}, and the theorem can now be stated as follows.
\begin{theorem}[\cite{vishik_notes}]
\label{th:vishik}
For every $p\in(2,\infty)$ there is a triple $\omega_0\in L^1_x\cap L^p_x$, $u_0\in L^2_x$ and $f\in L^1_t(L^1_x\cap L^p_x)$ with the property that there are uncountably many solutions $\omega\in L^\infty_t(L^1_x\cap L^p_x)$ and $u\in L^\infty_t(L^2_x)$ of \cref{eq:Euler}. Moreover, $\omega_0$ can be chosen to vanish identically.
\end{theorem}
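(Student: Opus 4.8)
The plan is to realize all the solutions as self-similar profiles and to reduce the whole statement, through Golovkin's trick, to the existence of a single self-similar vortex whose linearization has an eigenvalue with positive real part.

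\emph{Self-similar reduction.} Fix a parameter $\beta>p/2$ and look for $\omega(x,t)=t^{-1}G(\xi,s)$ with $\xi=x/t^{\beta}$, $s=\log t$. A direct computation turns \cref{eq:Euler} into a Cauchy problem of the form in the introduction,
\begin{equation*}
t\partial_t G=L(G)+B(G,G)+F,
\end{equation*}
with self-similar (dilation) operator $L(G)=G+\beta\,\xi\cdot\nabla_\xi G$, Euler nonlinearity $B(G,G)=-(\nabla_\xi^\perp\Delta_\xi^{-1}G)\cdot\nabla_\xi G$, and $f(x,t)=t^{-2}F(\xi,s)$, $u(x,t)=t^{\beta-1}\nabla_\xi^\perp\Delta_\xi^{-1}G$. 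By the Golovkin trick it then suffices to produce a smooth, zero-mean, suitably decaying radial profile $\bar\Omega$, a $\lambda\in\mathbb{C}$ with $\Re\lambda>0$, and an $\eta\in L^1_\xi\cap L^p_\xi$ (zero-mean, regular enough) with $\mathcal{L}_{\bar\Omega}\eta=\lambda\eta$, where $\mathcal{L}_{\bar\Omega}:=L+B(\bar\Omega,\cdot)+B(\cdot,\bar\Omega)=(\mathrm{Id}+\beta\,\xi\cdot\nabla_\xi)+L_E^{\bar\Omega}$ and $L_E^{\bar\Omega}$ is the linearized 2D Euler operator about $\bar\Omega$. Indeed $G^\pm=\bar\Omega\pm\Re(t^\lambda\eta)$ and $F=-L(\bar\Omega)-B(\bar\Omega,\bar\Omega)-B(\Re(t^\lambda\eta),\Re(t^\lambda\eta))$ solve the self-similar PDE, so they give two solutions $\omega^\pm$ of \cref{eq:Euler} with the same $f$, distinct because $\eta\not\equiv0$. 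The choice $\beta>p/2$ is precisely what forces $\|\omega^\pm(\cdot,t)\|_{L^p_x}=t^{2\beta/p-1}\|G^\pm(\cdot,s)\|_{L^p_\xi}\to0$ and $\|u^\pm(\cdot,t)\|_{L^2_x}=t^{2\beta-1}\|\nabla_\xi^\perp\Delta_\xi^{-1}G^\pm\|_{L^2_\xi}\to0$ as $t\to0^+$, hence $\omega_0\equiv0$, $u_0\equiv0$; the $L^1_x$ bound on $\omega$ and the $L^1_t(L^1_x\cap L^p_x)$ bound on $f$ follow from the analogous scaling identities, using that $\bar\Omega$ and $\eta$ are integrable with zero mean.

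\emph{The unstable self-similar vortex (the heart).} Restrict $\mathcal{L}_{\bar\Omega}$ to an $m$-fold symmetric sector, $\eta=e^{im\varphi}h(r)$ with $m\ge2$: both $L$ and $L_E^{\bar\Omega}$ leave it invariant and all its elements have zero mean. One starts from the classical spectral instability of 2D vortices — a suitable non-monotone radial vorticity makes $L_E^{\bar\Omega}$ carry an eigenvalue $\lambda_E$ with $\Re\lambda_E>0$ in some such sector (Rayleigh's inflection-point mechanism, in the quantitative Friedlander--Strauss--Vishik form), or one constructs such a $\bar\Omega$ by hand. The genuine obstruction is that the self-similar operator is \emph{not} small: on $L^p(\RR^2)$ its spectrum is the line $\{\Re z=1-2\beta/p\}$, which for $\beta>p/2$ lies strictly in the open left half-plane, so it cannot simply be added on top of $L_E^{\bar\Omega}$. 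The quantitative way around this is to dilate the amplitude $\bar\Omega\mapsto N\bar\Omega$: since $L_E^{N\bar\Omega}=N\,L_E^{\bar\Omega}$, both the unstable eigenvalue and the spectral gap around it scale like $N$, while $\mathrm{Id}+\beta\,\xi\cdot\nabla_\xi$ is untouched; for $N=N(p,\beta)$ large one may treat $\mathrm{Id}+\beta\,\xi\cdot\nabla_\xi$ as a perturbation of $N\,L_E^{\bar\Omega}$ of small \emph{relative} size and conclude, via analytic perturbation theory, that $\mathcal{L}_{N\bar\Omega}$ has an eigenvalue near $N\lambda_E$ (hence with $\Re>0$) with eigenfunction close to the unperturbed one. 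The delicate points — the (weighted) functional framework in which $\xi\cdot\nabla_\xi$ is actually controlled by $L_E^{\bar\Omega}$, the explicit bookkeeping of the $N$-, $\beta$- and $p$-dependences, and above all checking that the produced $\eta$ genuinely lies in $L^1_\xi\cap L^p_\xi$ — are where I expect the main difficulty to lie; these should constitute the ``new and more quantitative'' perturbative treatment of the self-similar operator announced in the abstract. Should a large $\Re\lambda$ turn out to be unavoidable (it competes with the decay of $\eta$ at infinity), one presumably has to combine the construction with a harmless spatial truncation of the self-similar profile, whose error is absorbed into $f$.

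\emph{From two to uncountably many.} Put $P=G-\bar\Omega$ and $P^\star(s)=\Re(e^{\lambda s}\eta)$; in logarithmic time $P$ solves $\partial_s P=\mathcal{L}_{\bar\Omega}P+B(P,P)-B(P^\star,P^\star)$, whose inhomogeneity $B(P^\star(s),P^\star(s))=O(e^{2\Re\lambda\,s})$ vanishes as $s\to-\infty$. Thus $R:=P-P^\star$ satisfies an equation with $R\equiv0$ as an equilibrium whose linearization at $s=-\infty$ is the unstable operator $\mathcal{L}_{\bar\Omega}$. A standard Lyapunov--Perron argument then builds a family of solutions $R\to0$ as $s\to-\infty$, parametrized by a ball in the (at least one-dimensional) unstable subspace of $\mathcal{L}_{\bar\Omega}$; distinct parameters yield distinct $R$, hence distinct profiles $G=\bar\Omega+P^\star+R$, hence uncountably many solutions $\omega$ of \cref{eq:Euler} with the same triple $(\omega_0,u_0,f)=(0,0,f)$, all defined on a short time interval $[0,T)$.
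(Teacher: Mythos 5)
Your architecture is the right one and coincides with the strategy of the cited proof as the paper recounts it: pass to self-similar variables with rate $\beta>p/2$, accept a forcing term so that an arbitrary radial profile becomes a forced self-similar solution, amplify the profile so that the linearized Euler part dominates the dilation operator, extract an unstable eigenvalue, and upgrade from two solutions to uncountably many via an unstable-manifold construction in logarithmic time. The scaling identities you record (e.g.\ $\|\omega(t)\|_{L^p_x}=t^{2\beta/p-1}\|G\|_{L^p_\xi}$) are correct and do give $\omega_0\equiv 0$, $u_0\equiv0$ and $f\in L^1_t(L^1_x\cap L^p_x)$ exactly when $\beta>p/2$. But the proposal has two genuine gaps, and they sit precisely where all the analytic work of the actual proof lives. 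First, the spectral perturbation: $\xi\cdot\nabla_\xi$ is a first-order operator and is \emph{not} relatively bounded by $L_E^{\bar\Omega}$ (itself a transport operator plus a nonlocal lower-order correction), so after the rescaling $\bar\Omega\mapsto N\bar\Omega$ "analytic perturbation theory" does not apply off the shelf --- a naive resolvent expansion loses derivatives, exactly as the paper warns. You flag this yourself as "where I expect the main difficulty to lie", but flagging is not resolving; the resolution (here, \cref{th:resolvent}/\cref{prop:conv_Sbeta}, an expansion of $(\mathcal S_\beta-z)^{-1}$ built from the explicit semigroup, which avoids the loss) is the actual content. Incidentally, your worry that a large $\Re\lambda$ "competes with the decay of $\eta$ at infinity" is off target: with a compactly supported vortex the eigenfunction is compactly supported, and a large $\Re\lambda$ is \emph{helpful}, governing the regularity at the origin (cf.\ \cref{cor:eta}).

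Second, the Lyapunov--Perron step is not "standard" given only the existence of one unstable eigenvalue. To run it you need a growth bound for $\e^{s\mathcal L_{\bar\Omega}}$ on the complement of the unstable subspace, i.e.\ control of the essential spectrum and a spectral-mapping-type statement for a non-self-adjoint, non-sectorial transport-type generator; establishing this occupies a substantial part of \cite{vishik_notes} and is nowhere addressed in your outline. (Your variant, which keeps the Golovkin term $-B(P^\star,P^\star)$ in the force and builds the unstable manifold of the shifted equation for $R=P-P^\star$, is legitimate and your algebra for the $R$-equation is correct, but it needs exactly the same dichotomy estimates.) So: correct skeleton, same route as the cited proof, with the two load-bearing lemmas assumed rather than proved. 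Note finally that the paper itself does not reprove this uncountable version --- it is quoted from \cite{vishik_notes}; what the paper proves is the two-solution statement \cref{th:vishik2}, for which Golovkin's trick dispenses with the Lyapunov--Perron step entirely, and that is its whole point.
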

To explain the main ideas behind the proof of Theorem \ref{th:vishik}, we  introduce the self-similar variables 
\begin{align}
\xi=\frac{x}{t^{\frac{1}{\alpha}}}, \qquad (\omega,\psi,f)(x,t)=\frac{1}{t}(\Omega,\, t^{\frac{2}{\alpha}}\Psi,\, t^{-1}F)(\xi,t),
\end{align}
where $\alpha \in (0,2)$ is a free parameter due to the scaling properties of \cref{eq:Euler}.
The equations \cref{eq:Euler} in self-similar variables are
\begin{align}
\label{eq:ssEuler}
\begin{cases}\displaystyle t\de_t\Omega -\big(1+\frac{\xi}{\alpha}\cdot \nabla\big)\Omega+U \cdot \nabla \Omega=F, \qquad \xi \in \RR^2, \, t>0,\\
U=\mathrm{BS}(\Omega)=\nabla^\perp \Psi, \qquad 
\Delta \Psi=\Omega.
\end{cases}
\end{align}
The $t\to0$ limit is a singular limit, but one can carefully assign  an initial data in \cref{eq:ssEuler}, and then recover the initial data $\omega_0$ in $L^{p}$ sense for instance. 

The first idea behind the proof of Theorem \ref{th:vishik} is to exploit Jia and \v{S}ver\'{a}k strategy \cite{jia2015incompressible}: find a linearly unstable self-similar profile (ideally without a force). The simplest exact self-similar solution is the radial vortex $\Omega=|\xi|^{-\alpha}$, which unfortunately cannot be used for the desired nonuniqueness as we explain in more detail in \cref{rem:ralpha}. By perturbing this radial vortex, self-similar spirals have been recently constructed in \cite{elling12spiral,elling16spiral,shao2023self}. However, the study of spectral properties of these exact self-similar solutions remains an extremely interesting open problem to date.
To overcome the issue of using an exact self-similar solution, the first key observation by Vishik is that any radial vortex is a self-similar solution with a suitable force. Indeed, a solution to \cref{eq:ssEuler} is 
\begin{equation}
    \label{eq:VishikVortex}
\Omega_V(|x|)\coloneqq\beta g(|x|), \qquad U_V(x)=\beta v(|x|)x^\perp,\qquad F_V=-\beta(1+\xi/\alpha)\cdot \nabla g,
\end{equation}
with $0\neq \beta\in \RR$, $g:\RR\to \RR$ a given smooth function and 
\begin{equation}
    \label{def:v}
    v(r)=\frac{1}{r^2}\int_0^r\rho g(\rho)\dd \rho.
\end{equation}
However, it was not clear if these type of solutions can be unstable in the self-similar framework, but here comes the second fundamental observation by Vishik: consider the linearized operator
  \begin{equation}
  \label{def:Lg}
      \mathcal{L}_{\beta,g}(w)\coloneqq\frac{1}{\beta}(1+\frac{\xi}{\alpha})\cdot \nabla w-U_V\cdot \nabla w-\mathrm{BS}(w)\cdot \nabla g,
  \end{equation}
  arising from the linearization of \cref{eq:ssEuler} at $g$. Then, the operator $\mathcal{L}_{\beta,g}$ can be seen as a perturbation of the one obtained by formally setting $\beta=\infty$. But $\mathcal{L}_{\infty,g}$ is nothing more than the linearized operator associated to the 2D Euler equations in standard variables. For the latter, examples of spectrally unstable vortices were rigorously proved to exist already in the work of Z. Lin \cite{lin2003instability}, for instance. However, the spectral perturbation in $\beta$ is quite delicate and in fact the structure of the specific operators involved is crucial (indeed, with a naive approach one could encounter derivative losses).
More precisely, thanks to the refinements of Vishik's results obtained in \cite{vishik_notes,castro2024proof}, the following is now established.
\begin{theorem}
\label{th:unstable}
For a suitable class of $g\in C^{\infty}_c(\RR)$ with zero mean $\int_{\RR}g(r)r\dd r=0$, there exists $\beta_0>0$ such that for all $\beta>\beta_0$ there exists $(\lambda,w)$ with $\mathrm{Re}(\lambda)>0$ and $w\in C^\infty_c(\RR^2\setminus \{0\})\cap C^2_c(\RR^2)$, $\int_{\RR^2}w(x)\dd x=0$ such that
\begin{equation}
    \mathcal{L}_{\beta,g}(w)=\lambda w.
\end{equation}   
\end{theorem}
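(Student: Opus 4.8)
The plan is to realize $(\lambda,w)$ as a perturbation of the $2$D Euler picture, i.e.\ of the formal limit $\beta=\infty$, exactly as suggested after \eqref{def:Lg}. Write $\mathcal{D}\coloneqq(1+\xi/\alpha)\cdot\nabla=\mathrm{id}+\tfrac1\alpha\,\xi\cdot\nabla$ for the self-similar dilation, so that $\mathcal{L}_{\beta,g}=\mathcal{L}_{\infty,g}+\tfrac1\beta\mathcal{D}$, where $\mathcal{L}_{\infty,g}$ is the linearization of the $2$D Euler equations at the radial vortex $g$. Since that vortex is radial, $\mathcal{L}_{\infty,g}$ and $\mathcal{D}$ both act diagonally on angular Fourier modes, so it suffices to build an unstable eigenpair on a single mode $m\ge1$; for such $m$ the constraint $\int_{\RR^2}w\,\dd x=0$ is then automatic. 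Two structural facts follow from the hypotheses on $g$: if $g\in C_c^\infty(\RR)$ has support in an annulus $\{a\le r\le b\}$, $a>0$, and $\int_{\RR}g(r)r\,\dd r=0$, then by \eqref{def:v} the profile $v$, hence $U_V$, is supported in the same annulus; and since $g$ is smooth and compactly supported, $g'$, $v$ and $g'/r$ vanish to infinite order at $r=a,b$. On the mode $m$, writing $w=w_m(r)e^{im\theta}$ and $\mathrm{BS}(w)=\nabla^\perp(\phi_m(r)e^{im\theta})$, the equation $\mathcal{L}_{\beta,g}w=\lambda w$ becomes
\begin{equation*}
\frac1\beta\Big(1+\frac r\alpha\,\de_r\Big)w_m=(\lambda+imv(r))\,w_m-im\,\frac{g'(r)}{r}\,\phi_m,\qquad \Big(\de_r^2+\frac1r\de_r-\frac{m^2}{r^2}\Big)\phi_m=w_m,
\end{equation*}
with $\phi_m$ decaying at $r=0$ and $r=\infty$.

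At $\beta=\infty$ the first relation is algebraic, $w_m=\tfrac{img'/r}{\lambda+imv}\,\phi_m$, and substituting into the second gives the classical Rayleigh eigenvalue equation of a $2$D Euler vortex. We fix $g$ in a class for which this problem is known to have, on some mode $m\ge1$, an eigenvalue $\lambda_0$ with $\Re\lambda_0>0$ --- this is Lin's instability mechanism \cite{lin2003instability}, see also \cite{vishik_notes,castro2024proof} --- which we may moreover take to be a simple, isolated eigenvalue, so that the associated Fredholm problem has one-dimensional kernel $\mathrm{span}\{(w_{0,m},\phi_{0,m})\}$ and one-dimensional cokernel. Because $\Re\lambda_0>0$, the factor $\lambda_0+imv(r)$ never vanishes; combined with $w_{0,m}=\tfrac{img'/r}{\lambda_0+imv}\phi_{0,m}$, the smoothness of $\phi_{0,m}$, and the infinite-order vanishing of $g'$ at $r=a,b$, this shows $w_{0,m}\in C_c^\infty$ with support in $\{a\le r\le b\}$, hence $w_0\coloneqq w_{0,m}(r)e^{im\theta}\in C_c^\infty(\RR^2\setminus\{0\})$.

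For large finite $\beta$ we set $\lambda=\lambda_0+\mu$, $w_m=w_{0,m}+\widetilde w$, $\phi_m=\phi_{0,m}+\widetilde\phi$ (with $\widetilde\phi$ solving the radial Poisson problem with datum $\widetilde w$) and subtract the $\beta=\infty$ relations, obtaining
\begin{equation*}
\frac1\beta\Big(1+\frac r\alpha\de_r\Big)\widetilde w-(\lambda_0+imv)\widetilde w+im\frac{g'}{r}\widetilde\phi=-\frac1\beta\Big(1+\frac r\alpha\de_r\Big)w_{0,m}+\mu\,(w_{0,m}+\widetilde w),
\end{equation*}
a perturbation of the Rayleigh linearization --- which has the one-dimensional kernel above --- by $O(1/\beta)$ source terms. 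We close by a Lyapunov--Schmidt reduction: normalize $\widetilde w$ transversally to the kernel, solve the projection of the equation onto a complement of the kernel for $(\widetilde w,\widetilde\phi)$ as a function of $(\mu,1/\beta)$ by a contraction mapping in a suitably weighted space on the half-line $r>0$ adapted to the first-order operator $\tfrac1\beta(1+\tfrac r\alpha\de_r)$, and reduce to a scalar solvability equation which --- by simplicity of $\lambda_0$, so that the relevant pairing has nonzero $\de_\mu$-derivative --- is solved for $\mu=\mu(\beta)\to0$ by the implicit function theorem. Taking $\beta_0$ large, this yields $(\lambda,w)$ with $\Re\lambda>0$ for every $\beta>\beta_0$. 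The support and regularity claims follow from the ODE outside the annulus, where $v\equiv g'\equiv0$, so that $w_m$ solves $\tfrac1\beta(1+\tfrac r\alpha\de_r)w_m=\lambda w_m$, i.e.\ $w_m(r)\propto r^{\alpha(\beta\lambda-1)}$: since $\Re(\beta\lambda)>1$ this blows up as $r\to\infty$, forcing $w_m\equiv0$ on $\{r\ge b\}$, while on $\{r\le a\}$ we select the unique solution with support away from the origin, namely $w_m\equiv0$. Thus $w$ is supported in $\{a\le|x|\le b\}$, so $w\in C_c^\infty(\RR^2\setminus\{0\})\cap C^2_c(\RR^2)$, and being a single mode $m\ge1$, $\int_{\RR^2}w\,\dd x=0$.

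The main obstacle is the singular nature of the perturbation $\tfrac1\beta\mathcal{D}$: it is a genuine first-order operator that is \emph{not} relatively bounded by $\mathcal{L}_{\infty,g}$ with small relative bound --- the first-order part of $\mathcal{L}_{\infty,g}$ is the skew-symmetric transport $U_V\cdot\nabla$, which cannot control a general radial derivative --- so a naive Kato-type argument loses a derivative. The quantitative mechanism that resolves this is that, on each mode, the top-order operator in $\widetilde w$, namely $\tfrac1\beta(1+\tfrac r\alpha\de_r)-(\lambda_0+\mu+imv)$, is boundedly invertible \emph{uniformly in} $\beta$ --- indeed with inverse of size $O(1/\beta)$ in the relevant norms --- because solving this first-order ODE by an integrating factor produces the kernel $\exp\!\big(-\alpha\beta\!\int^r\! s^{-1}(\lambda_0+\mu+imv(s))\,\dd s\big)$, which decays exponentially thanks to $\Re(\lambda_0+\mu)>0$. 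This favorable sign, itself a by-product of the instability, tames the singular perturbation and is exactly what makes a direct, quantitative contraction/implicit-function argument possible in place of abstract spectral perturbation theory.
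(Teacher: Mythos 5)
Your proposal follows a genuinely different route from the paper. You perturb in $1/\beta$ off a \emph{known} unstable eigenvalue $\lambda_0$, $\Re\lambda_0>0$, of the standard ($\beta=\infty$) Rayleigh problem, taken as a black box from \cite{lin2003instability,vishik_notes,castro2024proof}, and continue it to finite $\beta$ by Lyapunov--Schmidt. This is essentially the Vishik-style spectral perturbation that the paper deliberately avoids: the paper instead builds the eigenpair directly in self-similar variables by a neutral-limiting-mode bifurcation, perturbing simultaneously in $\eps$ (with $k^2=k_0^2+\eps$) and in $1/\beta$, and extracting $\Im\tilde c>0$ from the Sochocki--Plemelj formula. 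Your route has a real advantage if one grants the black box: since $\Re\lambda_0$ is a fixed positive number independent of $\beta$, the resolvent of the first-order operator is controlled uniformly near $\lambda_0$, whereas the paper must track an eigenvalue whose real part is only $\gtrsim\eps$, which forces the constraint $\beta\gg\eps^{-2}$. The price is self-containedness: the cited instability results are themselves proved by the neutral-mode construction, so your argument outsources exactly the part the paper re-does from scratch. You also correctly identify the central analytic mechanism — that the integrating-factor kernel of $\tfrac1\beta(1+\tfrac r\alpha\de_r)-(\lambda+imv)$ decays thanks to $\Re\lambda>0$, so the singular first-order perturbation does not lose derivatives — which is precisely the content of \cref{prop:conv_Sbeta} and \cref{sec:proof_inverse}.

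That said, three points need repair. First, the quantitative heart of the argument — that the inverse of the first-order operator converges, in norms strong enough to close the fixed point (you need control of $r\de_r$, not just $L^2$), to the multiplication operator $-1/(\lambda+imv)$ uniformly for $\lambda$ near $\lambda_0$ — is asserted, not proved; this is the paper's \cref{prop:conv_Sbeta} and occupies all of \cref{sec:proof_inverse}. Relatedly, your claim that this inverse is ``of size $O(1/\beta)$'' is a misstatement: the inverse is $O(1)$ uniformly in $\beta$ (it converges to a multiplication operator of size $\sim 1/\Re\lambda_0$); only the \emph{correction} to that multiplication operator is $\smallO_\beta(1)$. Second, the support claim on $\{r\le a\}$ is wrong as written: the equation for $w_m$ is a first-order ODE, so once the constant is fixed by requiring $w_m\equiv0$ on $\{r\ge b\}$ (where the homogeneous solution blows up), the value $w_m(a)$ is determined and is generically nonzero; setting $w_m\equiv0$ on $(0,a)$ would create a jump at $r=a$ incompatible with the ODE. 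The eigenfunction actually continues into $(0,a)$ as $w_m(a)(r/a)^{\alpha(\beta\lambda-1)}$, so its support is $[0,b]$ and its regularity at the origin is only finite — exactly why the theorem (and \cref{cor:eta}) claims $C^\infty_c(\RR^2\setminus\{0\})\cap C^2_c(\RR^2)$ for $\beta$ large rather than smoothness with support away from the origin. Third, simplicity of $\lambda_0$ is not free: isolatedness and finite multiplicity do follow from $\Re\lambda_0>0$ (the essential spectrum on a fixed angular mode lies on the imaginary axis), but simplicity must either be arranged or replaced by a finite-dimensional Rouch\'e/determinant reduction. None of these is fatal to the strategy, but the first is where the actual work of the theorem lives.
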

These smoothness properties of the eigenfunction and a simplified construction of a compactly supported unstable vortex have been established in \cite{castro2024proof}, where the zero mean condition was imposed to simplify the proof to get $L^2_x$ velocity fields.
The proposition above implies that $\Omega=g+\Re(t^\lambda \eta)$ solves
\begin{equation}
    t\de_t \Omega=\mathcal{L}_{\beta,g}(\Omega)+F_V,
\end{equation}
which is the linearized problem of \cref{eq:ssEuler} at $g$. We thus have two solutions of the linearized problem emanating from the same initial data, namely $\Omega_0=g$ and $\Omega_1=g+\Re(t^\lambda w)$. To obtain two solutions in the nonlinear problem, one has to perform a delicate nonlinear argument in order to  correct the error arising when plugging the linearized solution in \cref{eq:ssEuler}, which is the last key point in the proof of \cref{th:vishik}.

\subsection{Golovkin's trick in the 2D Euler equations}
\label{sec:golovkin_euler}
Combinining \cref{th:unstable} with the Golovkin's trick, we observe that the nonlinear argument mentioned before can be avoided if one is only interested in producing \emph{exactly two} solutions.
Indeed, it is enough to define 
\begin{equation}
\label{def:FG}
    F=-\beta(1+\xi/\alpha)\cdot \nabla g+\mathrm{BS}(\Re(t^\lambda w))\cdot \nabla \Re(t^\lambda w),
\end{equation}
where $g$ is an unstable vortex and $(\lambda,w)$ is the eigenpair in Theorem \ref{th:unstable}. Then, we have the following two solutions to \cref{eq:ssEuler}
\begin{equation}
\label{eq:Ompm}
    \Omega^+=g+\Re(t^\lambda w), \qquad \Omega^-=g-\Re(t^\lambda w).
\end{equation}
The associated solutions in original variables are $\omega^\pm(x,t)=t^{-1}\Omega^\pm(x/t^{1/\alpha})$. Thanks to the properties of $g$ and $w$, for any $T>0$ we also know that 
\begin{equation}
    \|\omega^\pm(t)\|_{L^{\frac{2}{\alpha}}}=\|\Omega^\pm(t)\|_{L^{\frac{2}{\alpha}}}, \qquad  \|\omega^\pm(t)\|_{L^p}<\infty \qquad \text{for all } 0\leq t<T \text{ and } p<\tfrac{2}{\alpha}.
\end{equation}
Moreover $\omega^\pm \to 0$ as $t\to 0$ in $L^p$ with $p<2/\alpha$ but $\omega^+(t)\neq\omega^{-}(t) \neq 0$ in $L^p$ for all $t>0.$

It remains to verify that the velocity fields $u^{\pm}(t,\cdot)=\mathrm{BS}(\omega^{\pm})(t,\cdot)\in L^2_x$. To this end, it is enough to observe $\int_{\RR^2}\omega^\pm \dd x=0$ since $g(r)$ is a zero mean vortex and $w$ has zero mean (it is in fact a function concentrated in an angular Fourier mode $k\neq 0$.)
Therefore, the two solutions in \cref{eq:Ompm} enjoy the same properties of the infinitely many ones found in \cite{vishik2018instability1,vishik2018instability2,vishik_notes,castro2024proof}. Thus,  with the Golovkin's trick, nonuniqueness becomes a direct corollary of Theorem \ref{th:unstable}.

\begin{remark}
The nonuniqueness statement obtained with the Golovkin's trick is  weaker compared to  what is obtained in \cite{vishik2018instability1,vishik2018instability2,vishik_notes,castro2024proof}. One has exactly two solutions and not a whole family living in the unstable manifold. The main difference is in the choice of the force. In our case, the force depends on the eigenpair $(\lambda,w)$. Therefore, once we fix $(\lambda,w)$ we find exactly two solutions with a single force. On the other hand, the results in \cite{vishik2018instability1,vishik2018instability2,vishik_notes,castro2024proof} show that with the force $F_V$ (which only depends on the vortex $g$), one can construct infinitely many solutions due to the possibility of suitably scaling any eigenpair $(\lambda,w)$ (or changing the $m$-fold symmetry of $w$). Therefore, the nonuniqueness found in \cite{vishik2018instability1,vishik2018instability2,vishik_notes,castro2024proof} is much more dramatic. The price to pay is a delicate nonlinear argument needed to correct the nonlinear error arising from the eigenpair (the last term in \cref{def:FG}).
\end{remark}
\begin{remark}[On the stability of a power law vortex]
\label{rem:ralpha}
Consider the power law vortex $\omega_\alpha(x)=(2-\alpha)|x|^{-\alpha}$ with associated self-similar profile $\Omega_\alpha(\xi)=(2-\alpha)|\xi|^{-\alpha}$ with $\alpha\in(0,2)$, the first solves \cref{eq:Euler} and the second \cref{eq:ssEuler}. We exhibit a simple example of nonuniqueness arising from modes $|k|=1$. In this case, one can explicitly find nonunique solutions for the linearized problem around $\omega_\alpha$ in standard variables, that is 
    \begin{equation}
\label{eq:linraplpha}\begin{cases}\de_t w_{\alpha}+r^{-\alpha}\de_\theta w_{\alpha}-\alpha(2-\alpha) r^{-(2+\alpha)}\de_\theta \varphi_{\alpha}=0,\\
       (\de_{rr}+r^{-1}\de_r+r^{-2}\de_{\theta\theta})\varphi_\alpha=w_\alpha.
        \end{cases}
    \end{equation}Imposing a separation of variables ansatz $w_\alpha(r,\theta,t)=t^\gamma r^q\cos(k\theta)$, it is not hard to verify that setting $q=-(1+\alpha)$, $k=1$ and $\varphi_\alpha(r,\theta,t)=-\frac{1}{\alpha(2-\alpha)} t^\gamma(r^{1-\alpha}\cos(\theta)+\frac{\gamma}{t} r\sin(\theta) )$ then we explicitly solve \cref{eq:linraplpha} for any $\gamma >0 $. This explicit solution emanates from the radially symmetric vortex and exact self-similar solution $\omega_\alpha$ and it is instantaneously breaking the radial symmetry. The factor $\gamma$ can be interpreted as an unstable  eigenvalue for the self-similar problem, but the freedom in choosing $\gamma$ relies on special cancellations for the modes $k=\pm1$ (as $r\sin(\theta)$ being harmonic). Moreover,  applying the Golovkin's trick we know that $\omega^\pm(r,\theta,t)=(2-\alpha)r^{-\alpha}\pm w_\alpha(r,\theta,t)\in L^\infty_t(L^1_{{\rm loc},x}\cap L^{p_\alpha}_{{\rm loc},x}(\RR^2))$ are two solutions to  \cref{eq:Euler} with the force
    \begin{equation}
        f=-\frac{1}{\alpha(2-\alpha)}\nabla^\perp \varphi_\alpha\cdot \nabla w_\alpha \in L^1_{t}\big((L^1_{{\rm loc},x}\cap L^{\tilde{p}_\alpha}_{{\rm loc},x})(\RR^2\setminus \{0\})\big),
    \end{equation}
    where $p_\alpha,\tilde{p}_\alpha\in (1,\infty)$ can be explicitly computed. 
However, $f\notin L^1_t(L^p_{{\rm loc},x}(\RR^2))$ for any $1\leq p\leq \infty$, meaning that this example does not prove the desired nonuniqueness for the 2D forced Euler equations.
Finally, let us also remark that linear stability properties of  $\Omega_\alpha$ have been rigorously studied in \cite{coiculescu2024stability}, exhibiting a concept of stability that naturally excludes the pathological behavior described above.
\end{remark}

\subsection{Main result}
The main goal of this paper is to provide a self-contained proof of the following.
\begin{theorem}
\label{th:vishik2}
For every $p\in(2,\infty)$ there is a triple $\omega_0\in L^1_x\cap L^p_x$, $u_0\in L^2_x$ and $f\in L^1_t(L^1_x\cap L^p_x)$ with the property that there are two solutions $\omega^+,\omega^{-}\in L^\infty_t(L^1_x\cap L^p_x)$ and $u^+,u^-\in L^\infty_t(L^2_x)$ of \cref{eq:Euler}. Moreover, $\omega_0$ can be chosen to vanish identically.
\end{theorem}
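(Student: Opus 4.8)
The plan is to combine Theorem~\ref{th:unstable} with the Golovkin reduction of Section~\ref{sec:golovkin_euler}, and then transfer the two self-similar solutions back to the original variables, where the only real work is an elementary scaling bookkeeping. First, given $p\in(2,\infty)$, I fix the self-similarity exponent $\alpha\in(0,2/p)$, which is possible because $2/p<1<2$; note in particular that $\alpha<1$ and $p<2/\alpha$. For this $\alpha$, Theorem~\ref{th:unstable} provides a zero-mean profile $g\in C^\infty_c(\RR)$, a parameter $\beta$, and an eigenpair $(\lambda,w)$ with $\Re\lambda>0$ and $w\in C^\infty_c(\RR^2\setminus\{0\})\cap C^2_c(\RR^2)$ of zero mean and supported in a single nonzero angular Fourier mode; by the computation in Section~\ref{sec:golovkin_euler}, the profiles $\Omega^\pm=g\pm\Re(t^\lambda w)$ solve \cref{eq:ssEuler} with the force $F$ of \cref{def:FG}. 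I then define, in the original variables,
\[
\omega^\pm(x,t):=t^{-1}\Omega^\pm(x/t^{1/\alpha},t),\qquad u^\pm(x,t):=t^{\frac1\alpha-1}\,\mathrm{BS}(\Omega^\pm)(x/t^{1/\alpha},t),\qquad f(x,t):=t^{-2}F(x/t^{1/\alpha},t),
\]
so that $u^\pm=\mathrm{BS}(\omega^\pm)$ and $(\omega^\pm,u^\pm)$ solve \cref{eq:Euler} classically for $t>0$, with $\omega^+(t)\neq\omega^-(t)$ for every $t>0$ because $\Re(t^\lambda w)$, being $t^{\Re\lambda}$ times the real part of $e^{i\Im\lambda\ln t}$ times a nonzero function $W(r)e^{ik\theta}$ with $k\neq0$, is never identically zero.

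Next I carry out the scaling estimates. Since $g$, $\nabla g$, $w$ and $\nabla w$ lie in $L^1_x\cap L^\infty_x$ with compact support and $|t^\lambda|=t^{\Re\lambda}$ is bounded on $[0,T]$, the self-similar quantities $\Omega^\pm(t)$, $(1+\xi/\alpha)\cdot\nabla g$ and $\mathrm{BS}(\Re(t^\lambda w))\cdot\nabla\Re(t^\lambda w)$ have $L^q_\xi$-norms bounded uniformly on $[0,T]$ for every $q\in[1,\infty]$ (using that $\mathrm{BS}$ sends compactly supported $L^1\cap L^\infty$ densities into $L^\infty$). For the velocity I invoke the two zero-mean conditions: by \cref{def:v}, $\mathrm{BS}(g)(x)=\beta v(|x|)x^\perp$ is \emph{compactly supported}, because $v(r)=r^{-2}\int_0^r\rho g(\rho)\,\dd\rho$ vanishes once $r$ exceeds the support of $g$ since $\int_0^\infty\rho g(\rho)\,\dd\rho=0$; and $\mathrm{BS}(w)$, the Biot--Savart field of a zero-mean compactly supported density in an angular mode $|k|\ge1$, is bounded and decays like $|x|^{-2}$, hence $\mathrm{BS}(w)\in L^2_x$. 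Therefore $\mathrm{BS}(\Omega^\pm)(t)\in L^2_x$ with norm bounded on $[0,T]$. Inserting the change-of-variables identities
\[
\|\omega^\pm(t)\|_{L^q_x}=t^{\frac{2}{\alpha q}-1}\|\Omega^\pm(t)\|_{L^q_x},\qquad \|u^\pm(t)\|_{L^2_x}=t^{\frac{2}{\alpha}-1}\|\mathrm{BS}(\Omega^\pm)(t)\|_{L^2_x},\qquad \|f(t)\|_{L^q_x}\lesssim t^{-2+\frac{2}{\alpha q}}\bigl(1+t^{2\Re\lambda}\bigr),
\]
and using $\alpha<2/q$ for $q\in\{1,p\}$, I obtain: $\|\omega^\pm(t)\|_{L^1_x\cap L^p_x}$ and $\|u^\pm(t)\|_{L^2_x}$ tend to $0$ as $t\to0^+$ and stay bounded on $[0,T)$, so $\omega_0=0$ and $u_0=0$ are admissible; and $\|f(t)\|_{L^1_x\cap L^p_x}\lesssim t^{-a}$ with $a:=\max\{2-\tfrac2\alpha,\,2-\tfrac{2}{\alpha p}\}<1$, hence $f\in L^1_t(L^1_x\cap L^p_x)$. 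Finally $\omega^\pm\in L^\infty_t L^2_x$ by interpolation (using $p>2$), so $\omega^\pm u^\pm\in L^\infty_t L^1_x$; integrating the pointwise identity \cref{eq:Euler} on $[\tau,T)$ against a test function, integrating by parts in $x$ and $t$, and letting $\tau\to0^+$ with $\|\omega^\pm(\tau)\|_{L^1_x}\to0$ gives the weak formulation on $[0,T)$ with vanishing initial datum, as required.

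I do not expect a serious obstacle, since the hard analytic content --- the existence of a self-similar linearly unstable vortex with the required regularity and mean-zero structure --- is exactly Theorem~\ref{th:unstable}. The only point that must be handled with care is the strict inequality $\alpha<2/p$ (as opposed to $\alpha\le2/p$): in the original variables the radial part $F_V=-\beta(1+\xi/\alpha)\cdot\nabla g$ of the force scales like $t^{-2+2/(\alpha p)}$ in $L^p_x$, which just fails to be time-integrable near $t=0$ precisely at $\alpha=2/p$, so one must keep $\alpha$ strictly below $2/p$ (and below $2$), after which every time-weight is integrable; the $L^2_x$ bound on the velocity and the admissibility of $u_0=0$ then rest solely on the zero-mean normalization of $g$ and $w$ built into Theorem~\ref{th:unstable}.
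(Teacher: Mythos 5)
Your proof is correct and follows essentially the same route as the paper: the Golovkin reduction of Section~\ref{sec:golovkin_euler} applied to the eigenpair of Theorem~\ref{th:unstable}, followed by the self-similar change-of-variables bookkeeping with $\alpha<2/p$ and the zero-mean conditions on $g$ and $w$ ensuring $u^\pm\in L^2_x$ --- which is exactly what the paper's own proof does, with Corollary~\ref{cor:eta} supplying the eigenpair. If anything, your version is slightly more careful on two points the paper glosses over: the force scales as $t^{-2}F$ (consistent with the introduction's self-similar ansatz, whereas the displayed proof writes $t^{-1}F$; the strict inequality $\alpha<2/p$ still yields $f\in L^1_t(L^1_x\cap L^p_x)$), and you spell out the passage to the weak formulation across $t=0$.
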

 Thanks to the Golovkin's trick, we know that everything is reduced in obtaining yet another proof of Theorem \ref{th:unstable}.  The novelty is that we construct the eigenpair $(\lambda,\eta)$ directly in self-similar variables with a compactly supported vortex $g\in C^\infty_c(\RR)$. We thus avoid performing the spectral perturbation argument after the construction of an unstable vortex in standard variables as in \cite{vishik2018instability1,vishik2018instability2,vishik_notes,castro2024proof}.

We will still treat the operator arising from the self-similar change of variables perturbatively, but the new key observation is in how we include it directly in the construction of the eigenpair. To do this, we first observe that when studying spectral stability properties of vortices (or shear flows) in standard variables, one formally divide the Rayleigh equation by $(ik v(r)-\lambda)$, where $v$ is as in \cref{eq:VishikVortex}. This operation is related to considering the resolvent of the operator $U_V\cdot \nabla$. Here, we will instead consider also the self-similar part and study the resolvent of the operator 
\begin{equation}
\label{eq:calSbeta}
   \mathcal{S}_\beta= U_V\cdot \nabla -\frac{1}{\alpha \beta}(\xi \cdot \nabla +1),
\end{equation}
which takes into account the skew adjoint part of $\frac{1}{\alpha\beta}\xi\cdot \nabla$.
In particular, we perform an expansion for $\beta\gg1$ and we obtain the following.
\begin{proposition}
\label{th:resolvent}
For all $z\in \mathbb{C}$ with $\mathrm{Re}(z)>0$, there exists $\beta_0>0$ such that for all $\beta\geq \beta_0$ the following holds true: there exists an operator $\mathcal{T}_{z,\beta} \coloneqq \mathcal{T}_\beta :D(\mathcal{S}_\beta) \to L^2_x$ such that    \begin{equation}
        (\mathcal{S_\beta}-z)^{-1}=(U_V\cdot \nabla -z)^{-1}-\mathcal{T}_\beta
    \end{equation}
    where $\|\mathcal{T}_{\beta}\|_{X\to X}\to 0$ as $\beta \to \infty$ with $X\in \{L^2_x,\dot{H}^1_x\}$.
\end{proposition}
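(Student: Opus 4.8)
\emph{Proof sketch.} The plan is to diagonalise both operators in the angular variable and reduce everything to a family of one--dimensional \emph{first order} linear ODEs, one per Fourier mode $k\in\ZZ$, exploiting that by construction $\mathcal{S}_\beta$ only contains a transport and a dilation term. Writing $w=\sum_k w_k(r)e^{ik\theta}$ and recalling that $U_V\cdot\nabla=\beta v(|\cdot|)\de_\theta$ and $\xi\cdot\nabla=r\de_r$, both operators act diagonally on modes, with $(\mathcal{S}_\beta)^{(k)}=ik\beta v(r)-\tfrac1{\alpha\beta}(r\de_r+1)$ and $(U_V\cdot\nabla)^{(k)}=ik\beta v(r)$. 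First I would pass to the logarithmic variable $s=\log r$ together with the conjugation $H(s)=e^{s}w_k(e^s)$, which turns $r\de_r+1$ into $\de_s$ and flattens the $L^2_x$ measure into $\dd s$; this is exactly the bookkeeping that makes $\tfrac1{\alpha\beta}(\xi\cdot\nabla+1)$, hence $\mathcal{S}_\beta$, skew--adjoint on $L^2_x$. Since $v\in L^\infty$ (because $g\in C^\infty_c(\RR)$), skew--adjointness gives at once that $(\mathcal{S}_\beta-z)^{-1}$ and $(U_V\cdot\nabla-z)^{-1}$ exist for $\Re z>0$ with norms $\le 1/\Re z$, uniformly in $\beta$ and $k$; thus $\mathcal{T}_\beta:=(U_V\cdot\nabla-z)^{-1}-(\mathcal{S}_\beta-z)^{-1}$ is well defined, and the resolvent identity gives the convenient form $\mathcal{T}_\beta=-(U_V\cdot\nabla-z)^{-1}\,\tfrac1{\alpha\beta}(\xi\cdot\nabla+1)\,(\mathcal{S}_\beta-z)^{-1}$. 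Note that a plain Neumann expansion is \emph{not} available, since $\tfrac1{\alpha\beta}(\xi\cdot\nabla+1)$ is unbounded relative to $U_V\cdot\nabla$; this is precisely why one should solve the ODE rather than iterate.

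The second step is to write $(\mathcal{S}_\beta-z)^{-1}$ explicitly. Mode by mode it is the solution operator of a first order ODE in $s$, and the endpoint forced by $\Re z>0$ (the same one keeping solutions regular at $r=0$) gives the kernel
\[
\big[(\mathcal{S}_\beta^{(k)}-z)^{-1}G\big](s)=-\alpha\beta\int_{-\infty}^{s}e^{-\alpha\beta z(s-\sigma)}\,\exp\!\Big(i\alpha k\beta^2\!\!\int_\sigma^s\tilde v\Big)\,G(\sigma)\,\dd\sigma,\qquad \tilde v(\tau):=v(e^\tau).
\]
By comparison $(U_V\cdot\nabla-z)^{-1}$ is multiplication by $(ik\beta\tilde v(s)-z)^{-1}$, and the key algebraic observation is that freezing the phase to $\tilde v(s)(s-\sigma)$ and replacing $G(\sigma)$ by $G(s)$ turns the displayed integral \emph{exactly} into $(ik\beta\tilde v(s)-z)^{-1}G(s)$, since $-\alpha\beta\int_{-\infty}^s e^{-\alpha\beta(z-ik\beta\tilde v(s))(s-\sigma)}\,\dd\sigma=(ik\beta\tilde v(s)-z)^{-1}$. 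Hence $\mathcal{T}_\beta^{(k)}$ decomposes into an ``input--variation'' error, governed by the increments $G(s)-G(\sigma)$ tested against the frozen kernel, and a ``phase'' error, governed by $\exp\big(i\alpha k\beta^2\int_\sigma^s(\tilde v(\tau)-\tilde v(s))\,\dd\tau\big)-1$ tested against it.

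I would then bound these by Schur/Young--type estimates, using that $\alpha\beta\,e^{-\alpha\beta\Re z\,(s-\sigma)}\mathbbm{1}_{\{\sigma<s\}}$ is an approximate identity concentrating at scale $(\alpha\beta\Re z)^{-1}\to 0$: the input--variation error is controlled by the $L^2$ modulus of continuity of $G$ at scale $\sim1/\beta$, and the crude bound on the phase error uses $|\tilde v(s)-\tilde v(\tau)|\le\|\tilde v'\|_\infty|s-\tau|$ together with the exponential gain (legitimate since $g\in C^\infty_c$). \emph{The main obstacle is uniformity in the angular frequency $k$}: the crude phase estimate loses one power of $|k|$, and here the precise structure of the operators is crucial --- a coarser argument would cost a derivative of the input. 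To recover the lost power of $k$ I would exploit the fast oscillation of the frozen phase $e^{i\alpha k\beta^2\tilde v(s)(s-\sigma)}$, integrating by parts in $s-\sigma$ (which differentiates only the smooth, $\beta$--independent factors, not $G$), and treat separately a shrinking neighbourhood of the zeros of $v$ --- where no oscillation is available and one instead uses the quantitative vanishing of $v$ (controlled by the smoothness and zero mean of $g$) together with the structure of the norms $L^2_x$, $\dot{H}^1_x$. This should give $\|\mathcal{T}_\beta^{(k)}\|\le C(z)\,o_{\beta\to\infty}(1)$ with a rate uniform in $k$, hence $\|\mathcal{T}_\beta\|_{L^2_x\to L^2_x}\to 0$.

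For $X=\dot{H}^1_x$ I would differentiate. From the ODE one reads $\de_s\big[(\mathcal{S}_\beta^{(k)}-z)^{-1}G\big]=\alpha\beta\big[(ik\beta\tilde v(s)-z)(\mathcal{S}_\beta^{(k)}-z)^{-1}G-G\big]$, so commuting a derivative through $\mathcal{T}_\beta=-(U_V\cdot\nabla-z)^{-1}\tfrac1{\alpha\beta}(\xi\cdot\nabla+1)(\mathcal{S}_\beta-z)^{-1}$ produces, besides $\de_s G$, only commutators with the smooth compactly supported coefficient $v$ and with the angular weight $k/r$, which are lower order; re-running the $L^2$ estimates on $\de_s G$ and these commutator terms yields the same $o(1)$ bound in $\dot{H}^1_x$. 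Fixing $\beta_0$ so large that all the resulting $o(1)$ quantities lie below any prescribed threshold finishes the proof.
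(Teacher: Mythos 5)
Your reduction to a family of first--order ODEs in the logarithmic variable, the explicit resolvent kernel, and the identification of $(U_V\cdot\nabla-z)^{-1}$ as the ``frozen'' version of that kernel are all algebraically correct, and this is a genuinely different organization from the paper's: there the resolvent is written as the Laplace transform $-\int_0^\infty \e^{-z\tau}\e^{\tau S_\beta}\,\dd\tau$ of the explicitly computable semigroup, the comparison with $\e^{\tau S_\infty}$ is done pointwise in the time variable $\tau$ by dominated convergence (\cref{lemma:small_tau}), and the $\tau$--integral is split at a large time $T$ using the decay of $\e^{-\Re(z)\tau}$. Your kernel is literally the $\tau$--integrated form of the same characteristics, so the two computations are equivalent; the difference is only in where the limit $\beta\to\infty$ is taken.

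The genuine gap is in the ``input--variation'' error. You claim it is controlled by the $L^2$ modulus of continuity of $G$ at scale $1/\beta$; that quantity tends to zero for each \emph{fixed} $G$, but it is \emph{not} small uniformly over the unit ball of $L^2$ (translate a fixed profile, or take $G(s)=\e^{i\mu s}\chi(s)$ with $\mu\gg\beta$), so this step yields strong convergence $\mathcal{T}_\beta w\to0$, not $\|\mathcal{T}_\beta\|_{X\to X}\to0$. In fact the operator--norm statement cannot be rescued: since $\int_0^{r_0}\rho g(\rho)\,\dd\rho=0$, one has $v\equiv0$ on $[r_0,\infty)$, and for $w=\e^{i\mu\log r}\chi(r)$ supported in $[2r_0,3r_0]$ with $\mu/\beta\to\infty$ one computes $(U_V\cdot\nabla-z)^{-1}w=-w/z$ while $(\mathcal{S}_\beta-z)^{-1}w\to0$, so $\|\mathcal{T}_\beta\|_{L^2_x\to L^2_x}\geq 1/|z|-o(1)$ (and similarly in $\dot H^1_x$). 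What the paper's argument actually delivers --- and all that is used downstream, since $T_\beta$ is only ever applied to $ik g'\phi/r$ with $\phi$ ranging over a bounded set of $H^1$ functions supported in $[0,r_0]$, hence over a compact subset of $L^2$ --- is convergence on each fixed element, equivalently uniform convergence on compact sets. Note also that once one settles for this strong form, the uniformity in $k$ that occupies the second half of your argument comes for free (a fixed $w\in L^2_x$ has a small tail in high angular modes, and all the resolvents are bounded by $1/\Re(z)$ uniformly in $k$ and $\beta$), whereas your plan to recover it from the oscillation of the frozen phase away from the zeros of $v$ cannot work as described: $v$ vanishes identically on the whole half--line $r\geq r_0$, which is exactly where the counterexample lives.
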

With this result, we have a quantitative way of expanding the resolvent of $\mathcal{S}_\beta$ so that we can follow the standard construction of unstable eigenvalues in the 2D Euler equations. To construct the eigenpair, we will combine the arguments used in \cite{vishik_notes,castro2024proof,kumar2023simple,albritton2023linear}. Namely, we use a compactly supported vortex as in \cite{castro2024proof}. This allow us to perform an inner-outer splitting as in \cite{kumar2023simple,albritton2023linear} and follow the construction from a neutral limiting mode as done in \cite{vishik_notes,kumar2023simple,lin2003instability}. Moreover, with a compactly supported background vortex it is rather straightforward to show the desired properties of the eigenfunction, see \cref{cor:eta}.

\medskip 

The rest of the paper is organized as follows: in 
 \cref{sec:instability} we present our direct proof of \cref{th:unstable}. In \cref{sec:proof_inverse} we show the proof of a result that implies \cref{th:resolvent}, which is a key ingredient in the proof of \cref{th:unstable}.

\section{A direct approach to self-similar instability}
\label{sec:instability}
The goal of this section is to provide a direct proof of Theorem \ref{th:unstable}. We will follow the standard strategy of constructing the unstable eigenpair as the one arising from a \textit{neutral limiting mode}, a method that dates back at least to the work of Tollmien \cite{tollmien1935allgemeines}. This was first rigorously implemented by Z. Lin \cite{lin2003instability} and improved and refined in \cite{vishik_notes}. The main novelty compared to the recent works \cite{vishik_notes, vishik2018instability1,vishik2018instability2, castro2024proof} is in how we treat the self-similar operator. Here, we directly include it in the argument to construct the eigenvalue, thus avoiding the use of general spectral perturbation arguments.

\subsection{The self-similar Rayleigh's equation} We start our analysis by writing the system for the perturbation around a vortex as in \cref{eq:VishikVortex}. To this end, let
$$\Omega^\pm=\Omega_V\pm W.$$ 
To write the system for $W$, we introduce the standard polar coordinates $\xi=r(\cos(\theta),\sin(\theta))$ where $(r,\theta)\in \RR_+\times \TT$. The gradient and the Laplacian are  $\nabla_{r,\theta}=(\de_r,r^{-1}\de_\theta)$ and $\Delta_{r,\theta}=\de_{rr}+r^{-1}\de_r+r^{-2}\de_{\theta \theta}$ respectively. Then, it is not hard to check that if $W$ satisfies the following system 
\begin{equation}
    \label{eq:W}
\begin{cases}
    t\de_tW-(1+\tfrac{1}{\alpha}r\de_r)W+\beta v(r)\de_\theta W-\beta\tfrac{g'(r)}{r}\de_\theta \Phi=0,\\
    \Delta_{r,\theta} \Phi=W, \quad \Phi \to 0 \text{ as } r\to 0^+ \text{ and } r\to +\infty,\\
    W|_{t=0}=0, 
\end{cases}
\end{equation}
then $\Omega^+$ and $\Omega^-$ are two solutions of \cref{eq:ssEuler} with the force 
\begin{equation}
\label{eq:F}
F=\nabla^\perp_{r,\theta} \Phi\cdot \nabla_{r,\theta} W-\beta(1+\tfrac{1}{\alpha}r
\de_r)g(r).
\end{equation}
We aim at solving \cref{eq:W} in $L^2(r\dd r \dd \theta)$ based spaces. Therefore, given the structure of the equations, it is convenient to make the following ansatz \begin{align}
\label{eq:WPhi}
    W(t,r,\theta)=\mathrm{Re}(t^{\beta \lambda}r^{-\frac12}\e^{ik\theta}\eta(r)), \qquad \Phi(t,r,\theta)=\mathrm{Re}(t^{\beta \lambda}r^{-\frac12}\e^{ik\theta}\phi(r)),
\end{align}
where $k\in \ZZ$ such that $|k|\geq 2$, $\beta\gg 1$ to be specified later and $(\lambda,\eta)\in \{z\in \mathbb{C}\, :\,\Re(z)>0\}\times H^1(\RR)$ is the eigenpair we are looking for. More precisely, the eigenfunction  in \cref{th:unstable} is $w=r^{-\frac12}\e^{ik\theta}\eta(r)$. The factor $r^{-\frac12}$ is introduced to work in $L^2(\dd r)$ based spaces instead of $L^2(r\dd r).$ We first observe that $\Delta_{r,\theta}\Phi=W$ becomes 
\begin{equation}
\rmL_k(\phi)\coloneqq\phi''-\frac{1}{r^2}\big(k^2-\frac14\big)\phi=\eta, \qquad \phi\to 0  \text{ as } r\to 0^+ \text{ and } r\to \infty.
\end{equation}
Notice that $\rmL_k$ is  the conjugation of the Laplacian restricted to the $k$-th angular Fourier mode with the weight $r^{-\frac12}$ , namely $\rmL_k=r^\frac12(\de_{rr}+r^{-1}\de_r-r^2/k^2)r^{-\frac12}.$
Then, rewriting the first equation in \cref{eq:W}  in terms of $\phi$, and writing $$
r^{\frac12}(1+\frac{1}{\alpha}r\de_r)r^{-\frac12}=1-\frac{1}{\alpha}+\frac{1}{\alpha}(r\de_r+\frac12)$$
we arrive at the 
\emph{self-similar Rayleigh's equation}
\begin{equation}
\label{eq:rayleigh_phi}
\left(ikv(r) - \frac{1}{\alpha\beta} \big(r\partial_r + \frac{1}{2}\big) +\lambda_\beta\right)\rmL_k(\phi)-ik\frac{g'(r)}{r}\phi=0,
\end{equation}
where we denote 
\begin{equation}
    \label{def:lambdabeta}    \lambda_\beta\coloneqq\lambda-\frac{\alpha-1}{\alpha\beta}.
\end{equation}
Our first goal is to find a solution $(\phi,\lambda_\beta) \in \dot{H^1}(\dd r)\times \mathbb{C}$ to \cref{eq:rayleigh_phi} with $\Re(\lambda_\beta)>0$ for a class of possibly unstable vortices.  By Rayleigh's criterion \cite{marchioropulvirenti}, we recall that $g$ needs to have at least one non-degenerate critical point. It is also convenient to assume it has compact support and nice properties close to the origin.  We then define the following class:

\begin{definition}
\label{def:class} Our class $\mathfrak{G}$ of possibly unstable compactly supported and smooth vortices consists of functions $g:\RR\to \RR$ such that:
\begin{itemize}
    \item[i)] $g\in C^\infty_c(\RR)$ and $\mathrm{supp}(g)\subseteq[0,r_0]$ for a finite $r_0>0$.
    \item[ii)] $g(r)=g_0-\gamma_0r^2$ for $r\in [0,\delta_0]$ for some $\delta_0\in (0,1)$ and $g_0,\gamma_0>0$ such that $g(r)>0$ on $[0,\delta_0]$.
    \item[iii)] $g(r)=-g_1+\tfrac12\gamma_1(r-r_1)^2$ for $r\in[r_1-\delta_1,r_1+\delta_1]$ with $g_1>0$ and some $\delta_1\in (0,1)$ and $\gamma_1>0$; in particular, $g'(r_1)=0$. 
    \item[iv)]  $g'(r)\leq0 $ in $[0,r_1]$ and $g'(r)\geq 0$ on $(r_1,r_0)$.  
   \item[v)] The vorticity profile has zero mean, namely $\int_{\RR}g(r)r \dd r=0$.
    \end{itemize}
\end{definition}

\begin{figure}[H]

\begin{tikzpicture}
  \begin{axis}[
    axis lines=middle,
    xlabel={$r$},
    ylabel={$g(r)$},
    xmin=-1, xmax=9,
    ymin=-4, ymax=8,
    xtick=\empty,
    ytick=\empty,
    samples=200,
    domain=-5:20,
    axis line style={->},
    extra x ticks={3.957,6.264},
  extra x tick labels={, },
    extra x tick style={
    tick style={black, thick}
  }
  ]
    \draw[dashed] (3.957,0)--(3.957,-3);
\node[black, label=above:{$r_1$}] at (axis cs:3.957,0) {};
\node[black, label=above:{$r_0$}] at (axis cs:6.264,0) {};

  \addplot[cyan!70!blue, thick, domain=0:2.2439] {5 *cos(deg(0.7 * x))};
  \addplot[cyan!70!blue, thick, domain=2.243:5.23] {1.021*(x-3.957)^(2)-3};
  \addplot[cyan!70!blue, thick, domain=5.23:6.264] {-1.258*(x-6.264)^(2)};
  \addplot[cyan!70!blue, thick, domain=6.264:9] {0*x};

  \end{axis}
\end{tikzpicture}
\caption{Qualitative plot of a vortex profile $g \in \mathfrak{G}$.}
\end{figure}

\begin{remark}
    The hypothesis v) is needed to guarantee that the velocity field associated to the vortex with vorticity profile $g$ is in $L^2_x$. This was already used in \cite{castro2024proof} to simplify the proof of \cref{th:vishik2}, that would otherwise require additional technical steps to ensure $u^{\pm}\in L^\infty_tL^2_x$. In fact, the vortex used in \cite{castro2024proof} belongs to the class $\mathfrak{G}.$
\end{remark}
\noindent 
For a vortex in this class, we are able to prove the following.
\begin{theorem}
\label{th:phiH1}
    There exists a vortex $g$ in the class $\mathfrak{G}$, $0<\eps<1$  and $\beta_0>0$ such that the following holds true: for all $\beta\geq \beta_0$, there exists $(\phi,\lambda_\beta)\in \dot{H}^1(\RR)\times \mathbb{C} $ solving \cref{eq:rayleigh_phi} with $\Re(\lambda_\beta)>\eps/2$.
\end{theorem}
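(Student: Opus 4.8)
The plan is to implement the neutral limiting mode strategy, treating the self-similar correction perturbatively via \cref{th:resolvent}. First I would fix a suitable vortex $g\in\mathfrak{G}$: by Rayleigh's criterion the presence of the non-degenerate interior critical point at $r_1$ with $g(r_1)=-g_1$ is the source of potential instability, and I would want the associated ``limiting'' Rayleigh problem at the critical value to have a nontrivial solution. Concretely, rewrite \cref{eq:rayleigh_phi} in the form $\rmL_k(\phi) = \dfrac{ik\,g'(r)/r}{\,ikv(r) - \frac{1}{\alpha\beta}(r\partial_r+\frac12) + \lambda\,}\,\phi$; using \cref{th:resolvent} with $z = \lambda$ (and noting $\mathcal{S}_\beta$ is exactly the operator $v(r)\partial_\theta$ plus the self-similar part at the relevant Fourier level, up to the $\beta$ rescaling absorbed in $\lambda_\beta$), the resolvent $(\mathcal{S}_\beta - \lambda)^{-1}$ equals $(ikv(r)-\lambda)^{-1}$ plus an operator $\mathcal{T}_\beta$ small on $L^2$ and $\dot H^1$. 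Thus \cref{eq:rayleigh_phi} becomes a compact perturbation, uniformly in $\beta\geq\beta_0$, of the classical Rayleigh eigenvalue problem $\rmL_k(\phi) = ik\,\frac{g'(r)/r}{ikv(r)-\lambda}\,\phi$.

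Next I would carry out the inner–outer splitting as in \cite{kumar2023simple,albritton2023linear}: since $g$ is compactly supported in $[0,r_0]$, the operator $\rmL_k$ can be inverted explicitly (in terms of the modified-Bessel-type fundamental solutions of $\phi'' - r^{-2}(k^2-\frac14)\phi=0$, i.e.\ powers $r^{k+1/2}$, $r^{-k+1/2}$) on the exterior region where $g'\equiv 0$, matching at $r_0$ to the interior problem. This reduces the construction to a self-adjoint-type eigenvalue problem on the bounded interval, where one tracks a family of real eigenvalues $\lambda = \lambda(s)$ of the modified operator as a coupling/homotopy parameter $s$ is varied (e.g.\ scaling the vorticity amplitude or deforming $g$ towards the limiting profile). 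At the endpoint one locates a \emph{neutral mode}: a solution with $\lambda$ on the imaginary axis, corresponding to $\lambda = ikv(r_*)$ for $r_*$ the critical point, where the singular integral in the inverted equation must be interpreted via the Rayleigh principal-value/Plemelj limit, and the sign of the imaginary part of the resulting dispersion relation (governed by $g''(r_1)$ and the local behavior of $v$) determines on which side the eigenvalue perturbs. Standard Rayleigh instability theory (as in \cite{lin2003instability,vishik_notes}) then gives, for the chosen $g$, an honest eigenvalue with $\Re(\lambda) > \eps$ for some fixed $\eps\in(0,1)$ when $\beta=\infty$ (i.e.\ for the unperturbed operator), and since $\mathcal{T}_\beta\to 0$ in the relevant norms, a continuity/implicit-function argument upgrades this to $\Re(\lambda_\beta) > \eps/2$ for all $\beta\geq\beta_0$, with $\phi\in\dot H^1(\RR)$ by the explicit exterior behavior and elliptic regularity of $\rmL_k^{-1}$ in the interior.

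The main obstacle I expect is the neutral-mode/bifurcation step: one must verify that for \emph{some} admissible $g\in\mathfrak G$ the limiting Rayleigh problem actually produces a genuinely unstable eigenvalue (not merely a neutral one that fails to bifurcate into $\Re(\lambda)>0$), and that this happens with the eigenvalue bounded \emph{away} from the imaginary axis uniformly — this is what allows the crude perturbative bound $\|\mathcal T_\beta\|\to 0$ to be enough rather than needing sharp control. This requires a careful choice of the parameters $g_0,\gamma_0,g_1,\gamma_1,r_0,r_1$ in \cref{def:class} together with the zero-mean constraint v), and a quantitative version of the classical instability criterion; I would borrow the specific profile and the sign computations from \cite{castro2024proof} (whose vortex lies in $\mathfrak G$) and from \cite{vishik_notes,kumar2023simple}, and simply check that the self-similar term, being $O(1/\beta)$ relative to the transport term after the $\beta\lambda$ rescaling in \cref{eq:WPhi}, does not destroy the spectral gap. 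The remaining steps — explicit solution of $\rmL_k\phi=\eta$ outside the support, the $\dot H^1$ bound, and continuity of the eigenvalue under the norm-convergent perturbation $\mathcal T_\beta$ — are routine given \cref{th:resolvent}.
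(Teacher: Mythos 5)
Your plan assembles the same ingredients as the paper's proof — the resolvent expansion of \cref{th:resolvent}/\cref{prop:conv_Sbeta} to avoid derivative losses, an inner--outer splitting exploiting the compact support of $g$, a neutral limiting mode at $c=v(r_1)$, and the Plemelj formula to get $\Im(\tilde c)>0$ — but it organizes them differently. You propose a two-step argument: first produce a genuinely unstable eigenvalue for the classical ($\beta=\infty$) Rayleigh problem, then transfer it to finite $\beta$ by a continuity/implicit-function argument using $\|\mathcal T_\beta\|\to 0$. That is essentially the Vishik/Albritton--Bru\'e--Colombo route, and the paper is explicitly designed to avoid it: there the operator $T_\beta$ is carried \emph{through} the neutral-mode construction (it enters the fixed-point system \cref{eq:varphi} via $\cR_\beta$ and the solvability condition via $\Gamma_2(\beta)$), and the eigenvalue is produced in a single Rouch\'e argument in which the $\eps$-, $M$- and $\beta$-errors compete simultaneously. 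Your version also replaces the cutoff-plus-Neumann-series coupling of the two regions by explicit matching of the power solutions $r^{\pm k+1/2}$ at $r_0$, which is a legitimate simplification of the exterior problem.

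The one point I would push back on is your claim that the $\beta=\infty$ eigenvalue is ``bounded away from the imaginary axis uniformly,'' which you use to justify that the crude bound $\|\mathcal T_\beta\|\to 0$ suffices. The eigenvalue produced by the neutral-limiting-mode bifurcation satisfies $\Re(\lambda_\beta)=k\,\eps\,\Im(\tilde c)$, i.e.\ it sits at distance $O(\eps)$ from the imaginary axis (and from the essential spectrum, which is the range of $ikv$), where $\eps=k^2-k_0^2$ is the detuning parameter you do not get to choose freely: it is constrained by the requirement that $k^2\in\mathbb N$, $|k|\ge 2$, which forces the linear-combination-of-vortices tuning from Vishik's notes that your sketch omits. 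Consequently the perturbation must be small \emph{relative to} $\eps$ (the paper needs $\beta\gg\eps^{-2}$), not merely small; this is compatible with the theorem since $\eps$ is fixed before $\beta_0$, but it means your ``routine'' continuity step hides exactly the quantitative bookkeeping (the $\mathcal O(\eps^2M^4+M^{-1})+M^4\smallO_\beta(1)$ error in the solvability condition versus the size $\eps$ of the leading term) that constitutes the technical core of Steps 2--5 of the paper's proof. If you make that dependence explicit, your outline closes.
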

The proof of this theorem occupies the rest of this section and it relies on a new approach to handle the self-similar operator, as it will be explained in the sequel. For now, let us just observe that with this theorem we are able to solve \cref{eq:rayleigh_phi} also in vorticity formulation, namely when identifying $\rmL_k(\phi)$ with $\eta$. In particular, we have the following.
\begin{corollary}
\label{cor:eta}
Let $(\phi,\lambda_\beta)$ be the couple solving \cref{eq:rayleigh_phi} found in Theorem \ref{th:phiH1}. Define 
\begin{equation}
\label{def:IC}
I(r)=r^{\alpha\beta\lambda_\beta-\frac12}\e^{ik\alpha \beta \big(v(0)\log(r)+\int_0^rs^{-1}(v(s)-v(0))\dd s\big)}, \qquad  C_\phi=ik\alpha\beta \int_0^{r_0}\frac{g'(s)}{s^2I(s)} \phi(s)\dd s.
\end{equation}
Then, $\eta=\rmL_k(\phi)$ defined as 
\begin{equation}
\label{def:eta}
\eta(r)=I(r)\big(C_\phi-ik\alpha\beta \int_0^r\frac{g'(s)}{s^2I(s)} \phi(s)\dd s\big)
 \end{equation}
solves \cref{eq:rayleigh_phi} and $\eta\in C^\infty_c(\RR\setminus \{0\})\cap C^\gamma_c(\RR)$ for $\gamma=\alpha \beta\Re(\lambda_\beta)-1/2$. In particular, for $\beta$ sufficiently large $\eta\in C^2_c(\RR)$.
\end{corollary}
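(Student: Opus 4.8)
The plan is to integrate the self-similar Rayleigh equation \cref{eq:rayleigh_phi} explicitly in its vorticity formulation, treating it as a first-order linear ODE for $\eta=\rmL_k(\phi)$. Rewriting \cref{eq:rayleigh_phi} as
\begin{equation}
\label{eq:firstorder}
\left(ikv(r)-\tfrac{1}{\alpha\beta}\big(r\partial_r+\tfrac12\big)+\lambda_\beta\right)\eta=ik\,\tfrac{g'(r)}{r}\phi,
\end{equation}
and multiplying through by $-\alpha\beta/r$, this becomes $\partial_r\eta+a(r)\eta=b(r)$ with $a(r)=\tfrac{1}{2r}-\alpha\beta\lambda_\beta r^{-1}-ik\alpha\beta v(r)r^{-1}$ and $b(r)=-ik\alpha\beta\,g'(r)s^{-2}\phi$. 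The integrating factor is precisely $I(r)^{-1}$ with $I$ as in \cref{def:IC}: indeed $\partial_r\log I(r)=(\alpha\beta\lambda_\beta-\tfrac12)r^{-1}+ik\alpha\beta\big(v(0)r^{-1}+r^{-1}(v(r)-v(0))\big)=(\alpha\beta\lambda_\beta-\tfrac12)r^{-1}+ik\alpha\beta\,v(r)r^{-1}=-a(r)$. (One must check here that $I$ is well defined: since $g$ is smooth and $g(r)=g_0-\gamma_0 r^2$ near $0$, one has $v(r)=\tfrac12 g_0-\tfrac14\gamma_0 r^2+O(r^2)$ near the origin, so $v(s)-v(0)=O(s^2)$ and the integral $\int_0^r s^{-1}(v(s)-v(0))\,\dd s$ converges; the factor $r^{\alpha\beta\lambda_\beta-1/2}$ is the only genuinely singular term as $r\to 0^+$.) Variation of constants then gives \cref{def:eta}, where the constant $C_\phi$ is fixed by requiring $\eta(r_0)=0$, equivalently that $\eta$ (hence the force) be supported in $[0,r_0]$; this is exactly the definition of $C_\phi$ in \cref{def:IC}. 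That this $\eta$ equals $\rmL_k(\phi)$ follows since both solve the same first-order ODE \cref{eq:firstorder} with the same behavior at $r_0$, using that $\phi\in\dot H^1$ solves \cref{eq:rayleigh_phi} by Theorem \ref{th:phiH1}.

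Next I would establish the regularity and support claims. On $[r_1+\delta_1,\infty)$ — more precisely wherever $g'\equiv 0$, so on $[r_0,\infty)$ and on $[0,\delta_0]$ where $g'(r)=-2\gamma_0 r$ contributes, and on the plateau-type regions — one reads off from \cref{def:eta} that $\eta$ is a smooth multiple of $I(r)$ times a smooth function, hence $\eta\in C^\infty$ away from $r=0$; and for $r\geq r_0$ the choice of $C_\phi$ forces $\eta\equiv 0$, giving $\mathrm{supp}(\eta)\subseteq[0,r_0]$. On any compact subset of $(0,\infty)$, $I$ is smooth and nonvanishing, $g'/s^2$ and $\phi$ are smooth (note $\phi\in\dot H^1$ together with the ODE \cref{eq:rayleigh_phi} bootstraps to $\phi\in C^\infty$ on the support, since $v,g$ are smooth), so $\eta\in C^\infty(\RR\setminus\{0\})$. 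The only delicate point is the behavior as $r\to 0^+$: here $I(r)=r^{\alpha\beta\lambda_\beta-1/2}\cdot(\text{smooth nonvanishing})$, and the integral $\int_0^r \tfrac{g'(s)}{s^2 I(s)}\phi(s)\,\dd s$ has integrand behaving like $s^{-1}\cdot s^{-(\alpha\beta\lambda_\beta-1/2)}\cdot \phi(s)$; since $g'(s)=-2\gamma_0 s$ near $0$ kills one power and $\phi\in\dot H^1$ gives $\phi(s)=O(s^{1/2})$ (by Cauchy–Schwarz on $\phi(s)=\int_0^s\phi'$, or more sharply from the ODE $\phi=O(s^{1/2+\alpha\beta\Re\lambda_\beta})$), one checks the integral converges and that $\eta(r)=I(r)C_\phi(1+o(1))$, so $\eta(r)\sim c\, r^{\alpha\beta\Re(\lambda_\beta)-1/2}$ up to the oscillatory phase. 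Taking $\gamma=\alpha\beta\Re(\lambda_\beta)-1/2$, this yields $\eta\in C^\gamma_c(\RR)$ after noting the oscillatory factor $\e^{ik\alpha\beta v(0)\log r}$ is $C^\gamma$ when multiplied by $r^\gamma$ (its derivatives pick up extra $r^{-1}$ but are beaten by $r^{\gamma-1}$ as long as $\gamma<$ next available power — the cleanest route is just to bound $|\partial_r^j(r^\gamma\e^{ic\log r})|\lesssim r^{\gamma-j}$ and interpolate). Finally, for $\beta$ large, $\alpha\beta\Re(\lambda_\beta)-1/2\geq \alpha\beta\,(\eps/2-(\alpha-1)/(\alpha\beta))\cdot(1/1)-1/2 > 2$ by Theorem \ref{th:phiH1} and \cref{def:lambdabeta} (using $\Re(\lambda_\beta)>\eps/2$ fixed and $\beta\to\infty$), so $\gamma>2$ and $\eta\in C^2_c(\RR)$; the zero-mean property $\int_{\RR^2}w\,\dd x=0$ follows from the angular factor $\e^{ik\theta}$ with $|k|\geq 2\neq 0$.

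The main obstacle I anticipate is the sharp regularity analysis at the origin: one must pin down the precise rate of vanishing of $\phi$ as $r\to 0^+$ (which requires using the indicial structure of \cref{eq:rayleigh_phi} near the regular singular point $r=0$, not merely $\phi\in\dot H^1$), and then verify that the oscillatory factor $\e^{ik\alpha\beta(v(0)\log r+\cdots)}$ — which has unbounded derivatives — does not spoil the Hölder count, i.e. that the product $r^\gamma$ times this phase genuinely lies in $C^\gamma$ and not merely $C^{\gamma-}$. This is a routine but careful computation: the key inequality is that for $\gamma\notin\ZZ$, $r^\gamma\e^{ic\log r}\in C^{\lfloor\gamma\rfloor,\gamma-\lfloor\gamma\rfloor}_{\mathrm{loc}}$ near $0$ because each $r$-derivative lowers the power by exactly one while the phase only produces bounded-order polynomial-in-$(\log r)$-free constants times $r^{-1}$. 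Everything else — the integrating-factor computation, the identification $\eta=\rmL_k\phi$, the compact support from the choice of $C_\phi$, and smoothness on $\RR\setminus\{0\}$ — is direct once \cref{eq:rayleigh_phi} is written as the first-order ODE \cref{eq:firstorder}.
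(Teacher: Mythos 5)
Your proposal follows the paper's proof essentially step for step: both rewrite \cref{eq:rayleigh_phi} as a first-order ODE for $\eta=\rmL_k(\phi)$, identify $I(r)^{-1}$ as the integrating factor (the paper reaches it by first changing phase, $\widetilde\eta=\e^{-ik\alpha\beta v(0)\log r}\eta$, precisely so that only the convergent integral $\int_0^r s^{-1}(v(s)-v(0))\,\dd s$ appears; your direct verification that $\de_r\log I=-a$ is the same computation), fix $C_\phi$ by demanding $\eta\equiv 0$ on $[r_0,\infty)$ (equivalently $L^2$ membership), and attribute the loss of regularity at the origin to the factor $r^{\alpha\beta\lambda_\beta-1/2}$ and the $\log$-phase, with elliptic bootstrapping supplying smoothness of $\phi$.

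One intermediate claim in your regularity discussion is wrong as stated, although the conclusions are unaffected. You assert that $\int_0^r s^{-2}g'(s)I(s)^{-1}\phi(s)\,\dd s$ converges because $g'(s)=O(s)$ and $\phi(s)=O(s^{1/2})$. With exactly those bounds the integrand has size $s^{-\alpha\beta\Re(\lambda_\beta)}$ near $s=0$, which is \emph{not} integrable in the regime $\beta\gg 1$ where the corollary is applied; and the fallback rate $\phi=O(s^{1/2+\alpha\beta\Re\lambda_\beta})$ you mention is not available either, since the indicial roots of $\rmL_k$ only allow $\phi\sim r^{1/2+|k|}$ generically, far below that power. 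The formula \cref{def:eta} must therefore be read through the combination $C_\phi-ik\alpha\beta\int_0^r(\cdots)=ik\alpha\beta\int_r^{r_0}(\cdots)$, which is manifestly well defined for every $r>0$ and tends to zero against $I(r)$ as $r\to 0^+$; in particular your claimed asymptotic $\eta(r)\sim c\,r^{\alpha\beta\Re(\lambda_\beta)-1/2}$ with $c=C_\phi\neq 0$ is not the actual leading behavior (the integral is dominated by its lower endpoint, and the powers of $I$ cancel). This does not damage the proof: the exponent $\gamma$ in the statement arises, exactly as you and the paper both say, from the fact that each derivative of $I$ costs a factor $r^{-1}$, not from the vanishing rate of $\eta$ itself, so the $C^\gamma_c$ and $C^2_c$ conclusions and the rest of your argument go through.
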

Notice that all the limitations in the smoothness are related to $I(r)$ which is singular only at the origin. Indeed, for the standard Rayleigh's equation, i.e. $\beta=+\infty$, we would get a smooth solution by standard elliptic arguments. 
Let us present the proof of this corollary which would then readily imply the proof of the nonuniqueness theorem.
\begin{proof}[Proof of Corollary \ref{cor:eta}]
    We can first  rewrite \cref{eq:rayleigh_phi} as 
\begin{equation}
    r\de_r \eta=\big(-\frac12+\alpha\beta(\lambda_\beta+ikv(r))\big)\eta-ik\alpha\beta\frac{g'(r)}{r}\phi.
\end{equation}
Now, one would be tempted to take the integrating factor naturally associated with the equation above with $\phi=0$. However, this would require to integrate $v(r)/r$ and we cannot do so for $r\to 0$. To overcome this issue, we first make a change in ``phase" by considering \begin{equation}
    \widetilde{\eta}(r)\coloneqq \e^{-ik\alpha\beta v(0)\log(r)}\eta(r),
\end{equation}
where $v(0)=g_0/2$ for vortices in class $\mathfrak{G}$ as in \cref{def:class}. A direct computation gives 
\begin{equation}
    r\de_r\widetilde{\eta}=\big(-\frac12+\alpha\beta(\lambda_\beta+ik(v(r)-v(0))\big)\eta-ik\alpha\beta\frac{g'(r)}{r}\e^{-ik\alpha\beta v(0)\log(r)}\phi.
\end{equation}
We now denote
\begin{equation}
    \widetilde{I}(r)=r^{\alpha\beta\lambda_\beta-\frac12}\e^{ik\alpha \beta \int_0^rs^{-1}(v(s)-v(0))\dd s}=I(r)\e^{-ik\alpha\beta v(0)\log(r)}.
\end{equation}
Notice that the integral in the definition of $\widetilde{I}$ is well defined by the properties of $g$ and hence $v$ as in \cref{def:v} and we have $$(\widetilde{I}^{-1}\widetilde{\eta})(r)=(I^{-1}\eta)(r).$$ Then, by the definitions of $\widetilde{\eta},\widetilde{I}$ and $I$, we get
\begin{equation}
r\de_r(\widetilde{I}^{-1}\widetilde{\eta})=r\de_r(I^{-1}\eta)=-ik\alpha \beta \frac{g'(r)}{r\widetilde{I}(r)\e^{ik\alpha\beta v(0)\log(r)}}\phi=-ik\alpha \beta \frac{g'(r)}{rI(r)}\phi.
\end{equation}
 Now, since $g'(r)=0$ for $r\geq r_0$ the only solution $\eta$ that can belong to $L^2(\dd r)$ is given by
\begin{equation}
\eta(r)=\widetilde{I}(r)\Big(C_\phi-ik\alpha\beta \int_0^r\frac{g'(s)}{s^2I(s)} \phi(s)\dd s\Big),
\end{equation}
where $C_\phi$ in \cref{def:IC} is chosen so that $\eta(r)=0$ for any $r\geq r_0$. 

By the formula for $\eta$, we readily deduce that $\eta$ is compactly supported in $[0,r_0]$ since $g'(r)=0$ for $r\geq r_0$. Moreover, since $\phi \in \dot{H}^1(\dd r)$ and $\Re(\lambda_\beta)>0$ we know that $\eta \in L^2(\dd r)$, which also implies $\phi=\rmL_k^{-1}(\eta) \in \dot{H}^2(\dd r)$. We can then take derivatives of \cref{def:eta} and, upon iterating elliptic regularity on $\eta$ when needed, it is not hard to  reach the desired conclusion (note that one derivative of the factor with a $\log(r)$ contributes like dividing by $r$, thus close to the origin it behaves like differentiating once the term $r^{\alpha\beta\lambda_\beta-\frac12}$). Moreover, we can take $\alpha\beta\Re(\lambda_\beta)$ as large as needed since $\Re(\lambda_\beta)>\eps/2$ for all $\beta\geq \beta_0$.
\end{proof}
With the properties of the eigenfunction $\eta$ at hand, we are finally ready to prove Theorem \ref{th:vishik2}.
\begin{proof}[Proof of Theorem \ref{th:vishik2}]
    Let $\eta$ be the function in Corollary \ref{cor:eta}, $\phi=\rmL_k^{-1}\eta$ and $W,\Phi,F$ be defined as in \cref{eq:F}-\cref{eq:WPhi}. We have to show that 
    \begin{equation}
        \omega_{\pm}(x,t)=\frac{1}{t}(\Omega_V\pm W)\big(\frac{x}{t^\frac{1}{\alpha}},t\big)
    \end{equation}
    are equal at time $t=0$ and that for $t>0$ are two non-trivial solutions of the Euler's equation in $L^p$ with the force
    \begin{equation}
        f(x,t)=\frac{1}{t}F\big(\frac{x}{t^\frac{1}{\alpha}},t\big).
    \end{equation}
    First, we observe that since $\Omega_V(\xi)=g(|\xi|)$ is compactly supported we have 
    \begin{equation}
t^{-1}\|\Omega_V(\cdot/t^{\frac{1}{\alpha}})\|_{L^p}\lesssim t^{-1}\big (\int_\RR |g(\rho/t^{\frac{1}{\alpha}})|^p \rho \dd \rho \big)^{\frac{1}{p}}\lesssim t^{\frac{2}{\alpha p}-1}, \end{equation}
which is finite for any $p\leq 2/\alpha$ and it goes to zero as $t\to0$ for any $p<2/\alpha$. Turning our attention to the part containing $W$, we observe that
    \begin{equation}
t^{-1}\|W(\cdot/t^{\frac{1}{\alpha}})\|_{L^p}\lesssim t^{\Re(\beta \lambda)-1}\big (\int_\RR \big|\big(\frac{\eta}{\sqrt{\cdot}}\big)(\rho/t^{\frac{1}{\alpha}})\big|^p \rho \dd \rho \big)^{\frac{1}{p}}\lesssim t^{\Re(\beta\lambda)+\frac{2}{\alpha p}-1}\|r^{\frac{1}{p}-\frac12}\eta\|_{L^p}.
\end{equation}
We also know that $\lambda=\lambda_\beta+(2\alpha-1)/(2\alpha\beta)$ and therefore $\Re(\beta \lambda)=\beta \Re(\lambda_\beta)+1-1/(2\alpha)$. Moreover, when  $\beta$ is sufficiently large, thanks to the characterizations in Corollary \ref{cor:eta} we know that $\|r^{\frac{1}{p}-\frac12}\eta\|_{L^p}<\infty$. Therefore 
 \begin{equation}
t^{-1}\|W(\cdot/t^{\frac{1}{\alpha}})\|_{L^p}\lesssim t^{\beta\Re(\lambda_\beta)+\frac{1}{\alpha}(\frac{2}{ p}-\frac{1}{2})},
\end{equation}
meaning that this term goes to zero as $t\to 0$  for all $p\leq 2/\alpha$ if $\beta$ is sufficiently large (because $\Re(\lambda_\beta)>\eps/2$ for all $\beta\geq \beta_0$). By similar arguments we deduce that $f\in L^1([0,T];L^p(\RR^2))$. Hence
\begin{equation}
    \lim_{t\to 0}\|\omega_+(t)-\omega_-(t)\|_{L^p}=0
\end{equation}
for all $p\leq2/\alpha$ but $\omega_+(t)\neq \omega_-(t)$ for any $t>0$, meaning that the nonuniqueness is proved.
\end{proof}
We are then left with the proof of Theorem \ref{th:phiH1}, which is carried out in the rest of the paper. We first introduce some function spaces and operators which are needed to construct the eigenfunction in Section \ref{sec:functionspace}. Then, to construct $\phi$, we find it convenient to perform an inner-outer splitting in \cref{sec:inout}. The construction of the eigenpair is presented in several steps in Section \ref{sec:construction}.
\subsection{Function spaces and operators}
\label{sec:functionspace}
Given $ R_*>0$, we work in the standard Sobolev spaces with norms (or seminorms for $\dot{H}^1$) \begin{align}
\label{def:H10}&\|f\|_{H^1_0([0,R_*];\dd r)}^2\coloneqq \|\de_r f\|_{L^2([0,R_*];\dd r)}^2+\|r^{-1}f\|^2_{L^2([0,R_*];\dd r)},\\
\label{def:H1dot}&\|f\|_{\dot{H}^1([R_*,+\infty);\dd r)}^2\coloneqq \|\de_r f\|_{L^2([R_*,+\infty);\dd r)}^2+\|r^{-1}f\|^2_{L^2([R,+\infty);\dd r)},\\
\label{def:H1}&\|f\|_{H^1(\RR;\dd r)}^2\coloneqq \|f\|_{L^2(\RR;\dd r)}^2 +\|\de_r f\|_{L^2(\RR;\dd r)}^2+\|r^{-1}f\|^2_{L^2(\RR;\dd r)},\\
\label{def:ZM} &\|f\|_{Z_M([R_*,+\infty];\dd r)}\coloneqq M^\frac12\|f\|_{\dot{H}^1([R_*,+\infty);\dd r)}, \qquad M>0,\\
\label{def:H1r} &\|f\|_{H^1_r([0,R_*];\dd r)}:=\sum_{q=0}^1\|r^q\de_r f\|_{L^2([0,R_*];\dd r)}+\|r^{q-1}f\|_{L^2([0,R_*];\dd r)}.
\end{align}
In the sequel, we will always omit the $\dd r$ in the notation of the spaces and we simplify the notation by omitting the domain when it is clear from context in which one the norm is set. Note that the norms above distinguish between a bounded domain $[0,R_*]$ and $[R_*,+\infty)$. This is useful to exploit the compact support of the unstable vortex and handle separately technical issues that could arise from working in the whole $\RR$. Then, the first  three norms above are natural in view of the presence of the elliptic operator $\rmL_k$ in \cref{eq:rayleigh_phi}. The space $Z_M$, used for instance in \cite{kumar2023simple},  encodes smallness in $\dot{H}^1[R_*,+\infty)$ if $M$ is sufficiently large, which will allow us to treat the \textit{outer} part of the solution perturbatively. On the other hand, the space $H^1_r([0,R_*])$ is adapted to the structure of the \textit{inner} problem, where the need of controlling $r\de_r$ derivatives is related to the fact that we are working in self-similar variables.
Notice that the $H^1_r$ is the strongest norm and 
\begin{equation}
    \|f\|_{H^1_0([0,R_*])}\leq \|f\|_{H^1_r([0,R_*])} \lesssim R_*\|f\|_{H^1_0([0,R_*])}.
\end{equation}

We now introduce a key  operator arising from \cref{eq:rayleigh_phi}. First, we observe that  setting formally $\beta=\infty$, one recovers exactly the standard Rayleigh's equation where usually $\lambda=-ikc$. To construct an eigenfunction, it is  common to divide \cref{eq:rayleigh_phi} by $(ik v(r)+\lambda)$ when $\mathrm{Re}(\lambda)>0$ and start the construction around the Sturm-Liouville problem arising when $\Re(\lambda) =0$ (the neutral limiting mode procedure). Note that dividing by $ik v(r)+\lambda$ is equivalent to considering the resolvent of the transport operator $v(r)\de_{\theta}$. In our case, we need to consider the resolvent of the operator $S_\beta: D(S_\beta)\subset L^2(\dd r)\to L^2(\dd r)$ defined as
\begin{align}
    \label{def:Sbeta}S_\beta&\coloneqq-ikv(r) + \frac{1}{\alpha\beta} \big(r\partial_r(\cdot) + \frac{1}{2}\big),\\
    D(S_\beta)& =\{f\in L^2(\dd r) \, : \, r\de_r f \in L^2(\dd r) \}.
\end{align}
Moreover, $S_\beta$ is skew-adjoint on $L^2(\dd r)$. Then, we can rewrite \cref{eq:rayleigh_phi} as
\begin{equation}
\label{eq:rayleigh_phi_Sbeta}
\left(S_\beta -\lambda_\beta I\right)\rmL_k(\phi)+ik\frac{g'(r)}{r}\phi=0.
\end{equation}

\begin{remark}
\label{rem:Sbeta}
    The operator $S_\beta$ is obviously related to the operator $\mathcal{S}_\beta$ defined in \cref{eq:calSbeta}. Indeed, $S_\beta(e^{ik\theta} f)=r^\frac12\mathcal{S}_\beta r^{-\frac12}(e^{ik\theta} f)$. Namely, $S_\beta$ is the restriction to $k$-th Fourier angular mode of the operator $\mathcal{S}_\beta$ conjugated with the weight $r^\frac12$.
\end{remark}

Our next task is to provide an expansion in $\beta$ for $(S_\beta-\lambda_\beta I)^{-1}$, which is clearly well-defined when $\Re(\lambda_\beta)>0$ since $S_\beta$ is skew-adjoint. 
Once this is done, we can formalize how we treat our problem as a perturbation of the standard Rayleigh's equation. 

In the next proposition we state in which sense $S_\beta$ is a perturbation of the operator
$S_\infty: L^2(\dd r) \to L^2(\dd r)$, defined as
\begin{equation}
\label{def:Sinf}
S_\infty\coloneqq -ikv(r).
\end{equation}
\begin{proposition}
\label{prop:conv_Sbeta}
Let $S_\beta,S_\infty$ be the operators defined in \cref{def:Sbeta},\cref{def:Sinf}. For all $z\in \mathbb{C}$ satisfying $\mathrm{Re}(z)>0$, there exists $\beta_0>0$ such that for all $\beta\geq \beta_0$ the following holds true: there exists an operator $T_{z,\beta} \coloneqq T_\beta :D(S_\beta) \to L^2(\dd r)$ such that
\begin{align}
\label{eq:splitS_beta}
    (S_\beta-zI)^{-1}=-(S_\infty-zI)^{-1}-T_\beta
\end{align}
 where $-(S_\infty-zI)^{-1}=1/(ikv(r)+z)$ and
\begin{equation}
\lim_{\beta \rightarrow \infty} \|T_{\beta} \|_{X \to X} = 0, \qquad \text{ for } X\in \{L^2(\RR),\dot{H}^1(\RR),H^1_r(\RR)\}.
\end{equation}
\end{proposition}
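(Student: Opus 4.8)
The plan is to read off $T_\beta$ from the second resolvent identity and then estimate it quantitatively in $\beta$, the whole difficulty being the unboundedness of the dilation operator. Write $S_\beta=S_\infty+\tfrac1{\alpha\beta}\mathcal A$ with $\mathcal A\coloneqq r\de_r+\tfrac12$. Both $S_\beta$ and $S_\infty$ are skew-adjoint on $L^2(\dd r)$, so for $\Re(z)>0$ the resolvents $(S_\beta-zI)^{-1}$ and $(S_\infty-zI)^{-1}$ exist and satisfy $\|(S_\bullet-zI)^{-1}\|_{L^2\to L^2}\le 1/\Re(z)$; this already handles the $L^2$ scale. To get uniform-in-$\beta$ bounds on the $\dot H^1$ and $H^1_r$ scales I would commute $\de_r$ (resp.\ $r\de_r$) through $S_\beta-zI$: the only dangerous commutator is with the multiplication by $-ikv(r)$, and here one uses that $r v'(r)=g(r)-2v(r)\in L^\infty$ by \cref{def:v}, so $[r\de_r,-ikv]$ is a bounded multiplication operator with \emph{no} derivative loss; the remaining commutator $[\de_r,\tfrac1{\alpha\beta}\mathcal A]=\tfrac1{\alpha\beta}\de_r$ carries a factor $\tfrac1{\alpha\beta}$ that is absorbed once $\beta\geq\beta_0(z)$. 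This yields $\|(S_\beta-zI)^{-1}\|_{X\to X}\lesssim_z 1$ uniformly in $\beta$ for $X\in\{L^2,\dot H^1,H^1_r\}$ (the bound for $S_\infty$ is immediate since $v$ is smooth, bounded, and compactly supported, being zero for $r\geq r_0$ by hypothesis~v)).

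Next, the second resolvent identity gives
\[
(S_\beta-zI)^{-1}-(S_\infty-zI)^{-1}=-\frac1{\alpha\beta}\,(S_\beta-zI)^{-1}\,\mathcal A\,(S_\infty-zI)^{-1},
\]
which identifies $T_\beta$ (up to the sign dictated by the statement, with $-(S_\infty-zI)^{-1}=1/(ikv(r)+z)$ the leading multiplication operator) with $\tfrac1{\alpha\beta}(S_\beta-zI)^{-1}\mathcal A(S_\infty-zI)^{-1}$. The prefactor $\tfrac1{\alpha\beta}$ is the source of smallness, but $\mathcal A(S_\infty-zI)^{-1}$ is an unbounded operator, so a bare estimate fails — this is precisely the ``derivative loss'' warned about in the discussion before the statement. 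I would resolve it by commuting $\mathcal A$ past the multiplication operator $(S_\infty-zI)^{-1}=(-ikv-z)^{-1}$:
\[
\mathcal A\,(S_\infty-zI)^{-1}=(S_\infty-zI)^{-1}\mathcal A+\mathcal B,\qquad \mathcal B=\Big(\frac{ik\,(g-2v)}{(-ikv-z)^2}\Big)\cdot,
\]
where again $rv'=g-2v\in L^\infty$ makes $\mathcal B$ a bounded multiplication operator with $\|\mathcal B\|_{X\to X}\lesssim_{z}|k|\,\|g-2v\|_\infty$ uniformly in $\beta$. Hence $T_\beta=\tfrac1{\alpha\beta}(S_\beta-zI)^{-1}(S_\infty-zI)^{-1}\mathcal A+\tfrac1{\alpha\beta}(S_\beta-zI)^{-1}\mathcal B$, and by Step~1 the second summand is $O(1/\beta)$ on every $X\in\{L^2,\dot H^1,H^1_r\}$.

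It remains to control $\tfrac1{\alpha\beta}(S_\beta-zI)^{-1}(S_\infty-zI)^{-1}\mathcal A$, which still carries the bare operator $\mathcal A=r\de_r+\tfrac12$ on the right. Here I would use that $\mathcal A$ \emph{is} bounded from the stronger scale: $\|\mathcal A f\|_{L^2}\le\|f\|_{H^1_r}$ directly from \cref{def:H1r}, and $\|\mathcal A f\|_{L^2}\lesssim_{r_0}\|f\|_{\dot H^1}$ for $f$ supported in $[0,r_0]$ — which is exactly the situation in \cref{eq:rayleigh_phi_Sbeta}, where the resolvent is applied to $ik\,g'(r)\phi(r)/r$, supported in the compact support $[0,r_0]$ of the vortex. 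Combined with the uniform resolvent bounds of Step~1, this makes the term $O(1/\beta)$ on the relevant scales; the $H^1_r\to H^1_r$ estimate additionally uses the commutation $r\de_r(S_\beta-zI)^{-1}=(S_\beta-zI)^{-1}r\de_r+ik(S_\beta-zI)^{-1}(g-2v)(S_\beta-zI)^{-1}$ from Step~1 to move the output derivative onto the input. Putting the three estimates together gives $\|T_\beta\|_{X\to X}\to 0$ as $\beta\to\infty$. The main obstacle is the step just described: the dilation $r\de_r$ is not bounded on $L^2$ or $\dot H^1$, and what saves the argument is the triple mechanism that (a) every commutator of $r\de_r$ with the coefficient $v$ closes without loss because $rv'=g-2v$ is bounded, (b) there is a genuine gain $1/(\alpha\beta)$ in front, and (c) the data to which the resolvent is applied carries the extra $r\de_r$-integrability coming from the compactly supported smooth vortex.
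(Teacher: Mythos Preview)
Your approach via the second resolvent identity is genuinely different from the paper's, which instead represents both resolvents as Laplace transforms of the semigroups, writes down the explicit flow
\[
f_\beta(r,\tau)=\e^{\frac{\tau}{2\alpha\beta}-ik\int_0^\tau v\big(\e^{(\tau-s)/(\alpha\beta)}r\big)\,\dd s}\,w\big(\e^{\tau/(\alpha\beta)}r\big),
\]
and shows by dominated convergence that $f_\beta-f_\infty\to 0$ in $X$ for each fixed $\tau$; splitting the time integral at a large $T$ then gives the claim.

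Your argument, however, does not close on any of the three scales. After your commutation the residual term is $\tfrac{1}{\alpha\beta}(S_\beta-zI)^{-1}(S_\infty-zI)^{-1}\mathcal A$, and the difficulty is that the $S_\infty$-resolvent is a pure multiplication operator, so it provides no smoothing: the bare $\mathcal A$ on the right must be controlled by the input norm alone. For $X=L^2$ you already concede this fails and retreat to the compactly supported inputs arising in the application, which is weaker than the proposition. The problem persists for $X=H^1_r$: your commutation moves the output $r\partial_r$ to the right, but since $[r\partial_r,\mathcal A]=0$ you then need $\|(r\partial_r)^2 w\|_{L^2}$, one derivative beyond what $H^1_r$ controls; the same obstruction appears for $X=\dot H^1$ with $\partial_r$ in place of $r\partial_r$. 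Structurally, the prefactor $1/(\alpha\beta)$ only cancels the $\alpha\beta$ hidden in the identity $\mathcal A=\alpha\beta(S_\beta-S_\infty)$, so the second resolvent identity alone yields an $O(1)$ bound rather than $o(1)$. The paper's semigroup route sidesteps this because the smallness is read off directly from the explicit flow, not from an operator factorization that has to absorb an unbounded $\mathcal A$.
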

The proof of the proposition above is postponed to the \cref{sec:proof_inverse} since it is slightly technical.

\begin{remark}
    Similarly to what we observed in Remark \ref{rem:Sbeta}, Proposition \ref{prop:conv_Sbeta} is  \cref{th:resolvent} restricted to functions concentrated on the $k$-th angular Fourier mode. To prove \cref{th:resolvent}, it is enough to replace  $r^{-1}$ with $|k|r^{-1}$ in the definition of the norms and take into account this scaling in $k$ in the proof of \cref{prop:conv_Sbeta}. These modifications are straightforward and, to avoid repetitions, we present only the details of the proof of \cref{prop:conv_Sbeta}.
\end{remark}
We now have all the main technical ingredients to start the construction.

\subsection{Inner and outer splitting}
\label{sec:inout}
To avoid some technical issues related to the unboundedness of the domain, we split our solution with an \textit{interior} and an \textit{outer} part, see also \cite{albritton2023linear,kumar2023simple}. To this end, we define two cut-off functions $\chi_1,\chi_2:[0,\infty)\to [0,1]$ with the following properties: let $R=10r_0$ with $r_0$ the fixed constant  in the \cref{def:class} and let $M\gg r_0$ to be specified later.
Let $\chi_1$ be a smooth function such that $\chi_1(r)=1$ for $r\leq M/2$ and $\chi_1(r)=0$ for $r\geq M$. This is the cut-off for the \textit{interior} part. For the \textit{outer} part, we set $\chi_2(r)=0$ for $r\leq R$ and $\chi_2(r)=1$ for $r\geq 2R$. Moreover, these cut-off can be chosen to satisfy
\begin{align}
&\|\chi_1'\|_{L^\infty}+\|r\chi_1''\|_{L^\infty}\lesssim \frac{1}{M},\label{eq:boundschi1}\\
&\|\chi_2'\|_{L^\infty}+\|r\chi_2''\|_{L^\infty}\lesssim 1,\label{eq:boundschi2}
\end{align}
where we keep track of constants depending on $M$ since this is a parameter that needs to be choosen sufficiently large. On the other hand, all constants depending on $R$ will be hidden in the $\lesssim$.
Then, we split 
\begin{equation}
      \phi= \chi_1\phi_{\rmin}+\chi_2\phi_{\rmout},
\end{equation}
and we need to  define the equations satisfied by $\phi_{\rmin}$ and $\phi_{\rmout}$. We observe that 
\begin{align}
\label{com:chiL}
[\chi_j,S_\beta \rmL_k](\phi)&=\left(-ikv(r)+\frac{1}{2\alpha\beta}\right)[\chi_j,\rmL_k](\phi)+\frac{1}{\alpha\beta}[\chi_j,r\de_rL_k](\phi),
\end{align} 
where  the commutators between $\chi_j$ and the operators involved are
\begin{align}
[\chi_j,\rmL_k](\phi)& =[\chi_j,\de_{rr}](\phi)=-2\chi_j'\phi'-\chi_j''\phi, \\
    \label{com:chirL}[\chi_j,r\de_r\rmL_k](\phi)& =[\chi_j,r\de_{rrr}](\phi)  - [\chi_j,r\de_{r} (r^{-2}(k^2-1/4))](\phi) \nonumber\\
    & =-r\de_r(\chi_j''\phi+2\chi_j'\phi')+ (k^2-1/4) r^{-1} \chi_j'\phi.
\end{align}
We finally define the inner and outer problems as 
\begin{align}
\label{eq:inner}&\left(S_\beta -\lambda_\beta I\right)\rmL_k(\phi_{\rmin})+ik\frac{g'(r)}{r}\phi_{\rmin}=\mathcal{C}_2(\phi_{\rmout}), \qquad  &\text{ for } r\in [0,M]\\
\label{eq:outer}&\left(S_\beta -\lambda_\beta I\right)\rmL_k(\phi_{\rmout})=\mathcal{C}_1(\phi_{\rmin}), \qquad& \text{ for } r\in [R,+\infty)
\end{align}
both with Dirichlet boundary conditions, where 
\begin{align}
   \label{def:Rout} &\mathcal{C}_2(\phi_{\rmout})\coloneqq[\chi_2,(S_\beta  - \lambda_\beta I) \rmL_k](\phi_{\rmout})\\
        \label{def:Rin}&\mathcal{C}_1(\phi_{\rmin})\coloneqq[\chi_1,(S_\beta  - \lambda_\beta I) \rmL_k](\phi_{\rmin}).
\end{align}
Since $\chi_2g'=0$, $\chi_1=1$ when $\chi_2'\neq 0$ and vice versa, it is not hard to see that when $\phi_{\rmin},\phi_{\rmout}$ solve \cref{eq:inner} and \cref{eq:outer} respectively,  then $\phi$ solves \cref{eq:rayleigh_phi_Sbeta}.

\subsection{Self-Similar Rayleigh as perturbation of Rayleigh}
Thanks to Proposition \ref{prop:conv_Sbeta}, we can rewrite our inner and outer problems \cref{eq:inner}-\cref{eq:outer} as follows:  in analogy with the common notation in the literature, we introduce the constant
\begin{equation}
\label{def:c}
c\coloneqq-\frac{\lambda_\beta}{ik}    
\end{equation}
Then, we consider $T_\beta$ as the perturbation we were looking for and, thanks to Proposition \ref{prop:conv_Sbeta} with $z=-ik c$, we rewrite the inner problem as
\begin{align}
\label{eq:rayleigh_phi_Sbeta_final_inner}
\rmL_k(\phi_{\rmin})-\frac{g'(r)}{r(v(r)-c)}\phi_{\rmin} =T_\beta\left(ik\frac{g'(r)}{r}\phi_{\rmin}\right)+\left(S_\beta -\lambda_\beta I\right)^{-1}\left(\mathcal{C}_2(\phi_{\rmout})\right).
\end{align}
For the outer problem we do not need a detailed expansion and therefore we simply have 
\begin{align}
\label{eq:Lkout}
\phi_{\rmout}=\left(\rmL_k^{-1}\left(S_\beta -\lambda_\beta I\right)^{-1}\right)\left(\mathcal{C}_1(\phi_{\rmin})\right)=:\cR_1(\phi_{\rmin}).
\end{align}
\subsection{Constructing the eigenpair}
\label{sec:construction}
To prove the existence of the desired eigenpair, we now follow more closely the strategy proposed in \cite{kumar2023simple} (which is itself based on \cite{vishik_notes,albritton2023linear,lin2003instability}), which we divide in 5 steps below. The first key step is to extract a Sturm-Liouville (SL) eigenvalue problem  approximating \cref{eq:rayleigh_phi_Sbeta_final_inner} in a limiting regime. In our case, this can be done by formally setting $\beta=\infty, \phi_{\mathrm{out}}=0$, $k=k_0$ and $c=v(1)$. Note that the latter choice avoids potentially dangerous singularities in view of the specific properties of a vortex in the class $\mathfrak{G}$ (namely, $g'(r)/(v(r)-v(1))$ is bounded for $r\to1$). Then, as we explain in Step 1, one can find an eigenpair $(\lambda_0,\phi_0)$ for the SL problem with $\lambda_0$ related to $k_0$ and $\phi_0$ will in fact be bounded in $H^1_r$. This is the starting point of the perturbative construction with suitable parameters at play. Indeed, in Step 2, we consider  $\phi_{\mathrm{in}}=\phi_0+\varphi$ with $\varphi\perp \phi_0$ in $L^2$, the parameter $0<\eps\ll1$, we set 
\begin{equation}
\label{def:k}
\mathbb{N} \ni k^2=k_0^2+\eps.
\end{equation}
and we look for $\tilde{c}$ so that
\begin{equation}
\label{def:c_with_eps}
c=v(1)+\eps \tilde{c}.
\end{equation}
We would then need to solve a system for $\varphi,\phi_{\mathrm{out}}$ and $\tilde{c}$ is determined via the orthogonality condition $\varphi \perp \phi_0$. The construction of $\varphi,\phi_{\mathrm{out}}$  can be reduced in checking the invertibility of certain operators in the spaces introduced at the beginning of \cref{sec:functionspace}. This will be carried out in great detail in Steps 2-4 and represents the most technical part of the paper. Once this is done, it remains to find $\tilde{c}$ and prove that $\Im(\tilde{c})>0$, a task carried out in Step 5. The existence is essentially a consequence of Rouché's theorem whereas $\Im(\tilde{c})>0$ is a consequence of the Sochocki-Plemelj formula.
We finally remark that the main idea can be summarized as an instability arising from a \textit{neutral limiting mode} associated to the problem with $\eps=0$. 

\medskip

\textbf{Step 1:} We define the approximating problem for  $\phi_{\rmin}$ by considering the formal limits $\eps\to 0$ and $\beta\to \infty$, which give a regular Sturm-Liouville eigenvalue problem
\begin{align}
\label{eq:phi_0}
&-\rmL_{k_0}(\phi_0) + A(r) \phi_0 = -\phi_0''+A(r)\phi_0+\frac{1}{r^2}(k_0^2-\frac14)\phi_0=0, \qquad \text{for } r\in [0,M]\\
\label{def:A}&A(r)\coloneqq\frac{g'(r)}{r(v(r)-v(1))},
\end{align}
with Dirichlet boundary conditions $\phi_0(0)=\phi_0(M)=0$. The standard form of a Sturm-Liouville problem, following the notation in \cite{Kong96}, is $-(py')'+qy=\lambda wy$. In our case, we  identify $y=\phi_0$, $p=1$, $q=A(r)$, $w=r^{-2}$ and $\lambda=-(k_0^2-1/4)$. In the sequel we will apply results from \cite{Kong96}, which only requires that $1/p,q,w\in L^1_{loc}((0,M))$ which are clearly satisfied in our context. Now, the parameter $k_0$ can be chosen so that $\lambda$ is the lowest eigenvalue for \cref{eq:phi_0} (and $\phi_0$ the associated eigenfunction). Thanks to the boundary conditions and $p\geq0$, we know that all eigenvalues are simple and eigenfunctions form a basis for $L^2([0,M])$ \cite{Kong96} Thus, the lowest eigenvalue is characterized by the Rayleigh's quotient as 
\begin{equation}
\label{eq:Rayquot}
    -(k_0^2-\frac14)=\min_{\|f/r\|_{L^2}=1} \int_0^{M}|f'(r)|^2+A(r)|f(r)|^2 \dd r.
\end{equation}
For vortices as in \cref{def:class}, for  $r\to 1$ we have $A(r)= \gamma_1/v'(1)+\smallO(r-1)$, where we recall that $v'(1)=-2\int_0^1g(r)r\dd r+g(1)=-2\int_0^1g(r)r\dd r-g_1$, where the last identity follows by property iii). Choosing appropriately the values of $g_0,g_1,\gamma_1, \delta_0,\delta_1$, one can choose $A(1)\approx -\widetilde{\gamma}_1<0$ for a given $\widetilde{\gamma}_1$,  and therefore $k_0^2-1/4$ can be tuned by choosing appropriately $\widetilde{\gamma}_1$ (one can test the minimization problem \cref{eq:Rayquot} with a function concentrated close to $r=1$). Moreover, by standard properties of regular Sturm-Liouville problems \cite{Kong96}, we know that the eigenvalue $k_0^2-1/4$ depends smoothly on the choices of the parameters in the class $\mathfrak{G}$ in \cref{def:class}. Therefore, we  know that for any $\eps$ there exists a  vortex in the class $\mathfrak{G}$ such that the conditions \cref{def:k} and \cref{eq:Rayquot} are satisfied. In fact, to prove that there exists a vortex in the class $\mathfrak{G}$ so that $k^2$ in \cref{def:k} is a natural number, one can also choose an arbitrary $|k_0|\geq 2$ and then perform a suitable linear combination of vortices in the class $\mathfrak{G}$, as done in Proposition 4.4.6 and Lemma 4.4.7 of \cite{vishik_notes}.

Let us prove that $\phi_0\in H^1_r([0,M])$ with a bound on this norm that does not depend on $M$ (otherwise it is straighforward since  by \cref{eq:Rayquot} we know $\phi_0\in H^1_0([0,M])$ ). Indeed, we can also prove better weighted estimates if $|k_0|$ is sufficiently large. To do this, we test \cref{eq:phi_0} with $-r^{2q}\phi_0$ for $q\geq 0$ to  obtain that 
\begin{equation}
    -\int_0^M r^{2q}\phi_0''\phi_0\dd r+\big(k_0^2-\frac14\big)\|r^{q-1}\phi_0\|^2_{L^2}=-\int_0^M r^{2q}A(r)\phi_0^2\dd r\leq \|r^{2(q+1)}A\|_{L^\infty}\|r^{-1}\phi_0\|_{L^2}.
\end{equation}
Integrating by parts we get
\begin{equation}
  -\int_0^M r^{2q}\phi_0''\phi_0\dd r=\|r^q\phi_0'\|_{L^2}^2+2q\int_0^Mr^{2q-1}\phi_0'\phi_0 \dd r.
\end{equation}
Hence, since $\|\phi_0/r\|_{L^2}=1$,
\begin{equation}
\|r^{q}\phi_0'\|_{L^2}^2+\big(k_0^2-\frac14\big)\|r^{q-1}\phi_0\|^2_{L^2}\leq \|r^{2(q+1)}A\|_{L^\infty}+2q\|r^q\phi_0'\|_{L^2}\|r^{q-1}\phi_0\|_{L^2}.
\end{equation}
Since $g'$ is supported in $[0,r_0]$, we see that $|r^{2(q+1)}A(r)|\leq C_1$ for some constant that does not depend on $M$. Hence, by Young's inequality
\begin{equation}
\label{eq:bdrqphi0}
\|r^{q}\phi_0'\|_{L^2}^2+\big(k_0^2-\frac14\big)\|r^{q-1}\phi_0\|^2_{L^2}\leq C_1+\frac{q}{c}\|r^q\phi_0'\|_{L^2}^2+c\|r^{q-1}\phi_0\|_{L^2}^2
\end{equation}
for any constant $c>0$. Choosing $q=1$ and $c=3/2$ we see that for any $|k_0|\geq 2$ we can easily absorb the last term in the left-hand side and get 
\begin{equation}
    \label{bd:phi0L2}
\|r\phi_0'\|_{L^2}+\|\phi_0\|_{L^2}\lesssim 1
\end{equation}
with an hidden constant that does not depend on $M$. Hence $\phi_0\in H^1_r([0,M])$ with a norm bound independent of $M$. Moreover, setting $q=3$ and $c=3$ in \eqref{eq:bdrqphi0} we also get that for $|k_0|\geq 2$
\begin{equation}
    \label{bd:rqphi0}
    \|r^2\phi_0\|_{L^2}\lesssim 1,
\end{equation}
which will be useful later.
\medskip 

\textbf{Step 2:} 
We now look for a solution to the inner problem in the form 
\begin{equation}
\label{eq:ansatz_phi_min}
\phi_{\rmin}=\phi_0+\varphi.
\end{equation}
We also need to impose that $\varphi\perp \phi_0$. Thus, we require  
\begin{equation}
\label{eq:proj} 
\mathbb{P}(\varphi)\coloneqq\frac{1}{\|\phi_0\|_{L^2}\|\varphi\|_{L^2}}\langle{\varphi,\phi_0}\rangle\phi_0=0,
\end{equation}
where the inner product above is in $L^2(\dd r)$ and the scaling constant is chosen for later convenience.
We can  insert the ansatz \cref{eq:ansatz_phi_min} in \cref{eq:rayleigh_phi_Sbeta_final_inner} and 
impose the last condition \cref{eq:proj}. In particular, given that $\phi_0$ solves \cref{eq:phi_0}, we can rewrite the left-hand side of \cref{eq:rayleigh_phi_Sbeta_final_inner} as 
\begin{equation}
    \rmL_{k_0}(\varphi)-(A(r)+\mathbb{P})(\varphi)-\eps\left(\frac{\tilde{c} g'(r)}{r(v(r)-c)(v(r)-v(1))}+\frac{1}{r^2}\right)(\phi_0+\varphi).
\end{equation}
Note that the factor $1/r^2$ above arise from \cref{def:k}, since $\rmL_{k}=\rmL_{k_0}-\eps/r^2$.
Applying $\rmL_{k_0}^{-1}$, the resulting system for $\varphi$ can then be rewritten as follows
\begin{equation}
   \label{eq:varphi}
    \begin{cases}
        (I-\cK)(\varphi)=(\cR_{\eps}+\cR_\beta)(\phi_0+\varphi)+\cR_{2}(\phi_{\rmout}),\\
        \mathbb{P}(\varphi)=0,
    \end{cases}
\end{equation}
where we define 
\begin{align}
    \label{def:K} &\cK(f)\coloneqq\rmL_{k_0}^{-1}((A(r)+\mathbb{P})(f))\\
   \label{def:cReps} &\cR_{\eps}(f)\coloneqq\eps \, \rmL_{k_0}^{-1}\left(\frac{g'(r)\tilde c f}{r(v(r)-c)(v(r)-v(1))}+\frac{f}{r^2}\right)\\
   \label{def:cRbeta} &\cR_{\beta}(f)\coloneqq\rmL_{k_0}^{-1}\big( T_\beta \big(ik  \frac{g'(r)}{r} f\big)\big)\\
&\cR_{2}(\phi_{\rmout})\coloneqq\rmL_{k_0}^{-1}\big((S_\beta-\lambda_\beta I)^{-1}\mathcal{C}_2(\phi_{\rmout})\big)
\end{align}
We want to solve  the first equation in \cref{eq:varphi} in $H^1_r([0,M])$ with Dirichlet boundary conditions.
 We observe the following. 
 \begin{lemma}
 \label{lemma:I-K}
     The operator $I-\cK:H^{1}_r([0,M])\to H^1_r([0,M])$ is invertible. Moreover, we have 
     \begin{equation}
     \|(I-\cK)^{-1}f\|_{H^1_r([0,M])}\leq C\|f\|_{H^1_r([0,M])}     \end{equation} with $C$ independent of $M$.
 \end{lemma}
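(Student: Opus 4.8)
The plan is to read $I-\cK$ as a compact perturbation of the regular Sturm--Liouville operator from Step~1 and apply the Fredholm alternative for each fixed $M$, and then to upgrade this to a bound uniform in $M$ by a compactness argument against the corresponding half-line problem. Since $\rmL_{k_0}\phi_0=A\phi_0$ by \cref{eq:phi_0}, solving $(I-\cK)\varphi=f$ amounts (applying $\rmL_{k_0}$) to solving $(-\rmL_{k_0}+A)\varphi+\mathbb{P}(\varphi)=-\rmL_{k_0}f$ with Dirichlet conditions, the operator $-\rmL_{k_0}+A$ being exactly the one whose ground state is $\phi_0$. The first thing to prove is that $\cK$ is compact on $H^1_r([0,M])$. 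The map $f\mapsto Af+\mathbb{P}(f)$ sends $H^1_r([0,M])$ into $L^2([0,r_0])\oplus\mathrm{span}(\phi_0)$, because $A$ is bounded with $\mathrm{supp}(A)\subseteq[0,r_0]$ (by the structure of $\mathfrak G$) and $\mathbb{P}$ has rank one; and $\rmL_{k_0}^{-1}$ with Dirichlet conditions is well defined, since $\langle-\rmL_{k_0}\phi,\phi\rangle\geq(k_0^2-\tfrac14)\|\phi/r\|_{L^2}^2>0$, and gains two derivatives where the source sits. Hence the image is bounded in $H^2([0,2r_0])$, precompact there in $H^1$ by Rellich, while on $[2r_0,M]$ it is a combination of the homogeneous solutions $r^{1/2\pm|k_0|}$ fixed by two scalars (the matching value at $2r_0$ and the Dirichlet condition at $M$), hence a finite-rank contribution; the $\mathbb{P}$-piece is finite rank too. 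The same decomposition, together with the fact that the Dirichlet condition forces the coefficient of the growing mode $r^{1/2+|k_0|}$ to be $O(M^{-2|k_0|})$, shows — using $|k_0|\geq 2$ and the resulting $M$-uniform $L^2\to H^1_r$ bound for $\rmL_{k_0}^{-1}$ on sources concentrated near $\mathrm{supp}(A)$ — that $\|\cK\|_{H^1_r\to H^1_r}\lesssim 1$ uniformly in $M$.

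Next I would prove that $I-\cK$ is injective on $H^1_r([0,M])$, which together with the previous step yields invertibility for each $M$. If $(I-\cK)\varphi=0$, then $(-\rmL_{k_0}+A)\varphi=-\mathbb{P}(\varphi)=-\mu\phi_0$ for a scalar $\mu$; pairing with $\phi_0$ in $L^2$, using that $-\rmL_{k_0}+A$ is symmetric under Dirichlet conditions and $(-\rmL_{k_0}+A)\phi_0=0$, forces $\mu\|\phi_0\|_{L^2}^2=0$, hence $\langle\varphi,\phi_0\rangle=0$. Then $(-\rmL_{k_0}+A)\varphi=0$, and since the lowest Sturm--Liouville eigenvalue is simple (\cite{Kong96}, Step~1) we get $\varphi\in\mathrm{span}(\phi_0)$; combined with $\langle\varphi,\phi_0\rangle=0$ this gives $\varphi=0$. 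Note that this argument only uses the normalisation of $\mathbb{P}$ through the equivalence $\mathbb{P}(\varphi)=0\iff\langle\varphi,\phi_0\rangle=0$.

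Finally, for the bound uniform in $M$ I would argue by contradiction: if it failed there would be $M_n\to\infty$ and $\varphi_n$ with $\|\varphi_n\|_{H^1_r([0,M_n])}=1$ and $\|(I-\cK)\varphi_n\|_{H^1_r}\to 0$. Extending by zero to $[0,\infty)$ and using that $A\varphi_n$ lives in the fixed interval $[0,r_0]$, together with the uniform bounds above, one extracts a subsequence with $\cK\varphi_n\to\cK_\infty\varphi_*$ strongly in $H^1_r([0,\infty))$, where $\cK_\infty$ is the analogue of $\cK$ with $\rmL_{k_0}^{-1}$ the half-line resolvent decaying at $+\infty$; then $\varphi_n\to\varphi_*$ strongly, so $\|\varphi_*\|_{H^1_r}=1$ and $(I-\cK_\infty)\varphi_*=0$, contradicting injectivity of $I-\cK_\infty$, proved exactly as above once $0$ is known to be a simple eigenvalue of $-\rmL_{k_0}+A$ on $[0,\infty)$ with a fast-decaying, $L^2$-type eigenfunction (which one obtains by passing to the limit in $\phi_0$, noting it stays nodeless). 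I expect this last step to be the main obstacle: one must set up the half-line operator carefully — the crucial point being that $\rmL_{k_0}^{-1}$ of sources supported in $[0,r_0]$ maps into $H^1_r$ with a bound independent of $M$, which is precisely where $|k_0|\geq 2$ (in practice, ``$|k_0|$ large'') enters — and verify, via the weighted Sturm--Liouville picture in which the relevant eigenvalue $1/4-k_0^2$ sits strictly below the continuous spectrum $[1/4,\infty)$, that the limiting ground state is a genuine simple eigenfunction rather than a threshold resonance.
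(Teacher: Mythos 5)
Your injectivity argument coincides with the paper's: test $\rmL_{k_0}\psi-(A+\mathbb{P})\psi=0$ against $\phi_0$, use that $\rmL_{k_0}-A$ is self-adjoint with $(\rmL_{k_0}-A)\phi_0=0$ to force $\mathbb{P}\psi=0$, then conclude from simplicity of the lowest Sturm--Liouville eigenvalue. Your compactness discussion (sources confined to $[0,r_0]$ plus a rank-one piece, elliptic gain there, explicit tails $r^{1/2\pm|k_0|}$ with the growing coefficient damped to $O(M^{-2|k_0|})$ by the Dirichlet condition) is more detailed than the paper, which simply asserts compactness, and is sound.

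The gap is in the uniform-in-$M$ bound, which is the actual content of the ``moreover'' clause and which you reduce to a limiting half-line statement you do not prove. Two concrete obstructions. First, $\{\cK_M\}$ is not one fixed operator restricted to growing intervals: $\phi_0$, the projection $\mathbb{P}$, and indeed the vortex itself (hence $A$) are tuned to $M$ in Step 1 so that the lowest eigenvalue equals $-(k_0^2-\tfrac14)$ with $k_0^2+\eps\in\NN$; for a fixed vortex that eigenvalue strictly decreases in $M$, so $\tfrac14-k_0^2$ is generically \emph{not} an eigenvalue of your half-line operator, and the injectivity of $I-\cK_\infty$ you invoke concerns the wrong spectral point unless you also prove convergence of the vortices and of $\phi_{0,M_n}$. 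Second, to get $\cK_{M_n}\varphi_n\to\cK_\infty\varphi_*$ strongly from a merely bounded sequence you need collective compactness of the whole family, not compactness of each member, on top of the threshold-resonance issue at the edge of the essential spectrum $[\tfrac14,\infty)$ that you yourself flag. None of this machinery is needed: the paper obtains the constant directly. The $H^1_0$ bound follows from the spectral gap of the Sturm--Liouville problem, controlled by $A$ alone; the weighted part is then a short energy estimate, testing $\rmL_{k_0}\psi-(A+\mathbb{P})\psi=\rmL_{k_0}f$ with $-r^2\psi$, noting that for $|k_0|\ge 2$ the quantity $-\langle r^2\rmL_{k_0}\psi,\psi\rangle$ controls $\|r\psi'\|_{L^2}^2+\|\psi\|_{L^2}^2$, and absorbing the remaining terms via $\|r^4A\|_{L^\infty}\lesssim 1$ and the $M$-independent bounds $\|r\phi_0'\|_{L^2}+\|\phi_0\|_{L^2}+\|r^2\phi_0\|_{L^2}\lesssim 1$ from Step 1. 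Replacing your contradiction argument with this direct estimate closes the gap.
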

 \begin{proof}
     The proof of the invertibility in $H^1_0([0,M])$ is similar to \cite[Lemma 3]{kumar2023simple}. Since $\cK$ is compact we only have to show that $\mathrm{Ker}(I-\cK)=0$, namely that if 
     \begin{equation}
         \rmL_{k_0}\psi-(A+\mathbb{P})\psi=0
     \end{equation}
     then $\psi=0$. Testing against $\phi_0$, since $\rmL_{k_0}-A$ is self-adjoint and $(\rmL_{k_0}-A)(\phi_0)=0$ we deduce that $\mathbb{P}\psi\perp \phi_0$, which is only possible if $\psi=0$. Moreover, the bound on the  $H^1_0([0,M])$ norm of $(I-\cK)^{-1}$ only depends on the spectral gap associated to the Sturm-Liouville problem \cref{eq:phi_0}. Indeed, if $\psi=(I-\cK)^{-1}f\in \mathrm{span}\{\phi_0\}$ we have nothing to prove thanks to the properties of $\phi_0$. Otherwise, since $k_0^2-1/4$ is the maximal eigenvalue of $-\de_{rr}+A(r)$, by standard Sturm-Lioviulle theory \cite{Kong96} we have a spectral gap that only depends on the properties of the function $A$, which itself does not depend on $M$.
     
     We thus have to show the bound only on the part with $q=1$ in the $H^1_r$ norm. By the definition of the operators,  let $\psi=(I-\cK)^{-1}f$. Then
\begin{equation}
         \rmL_{k_0}\psi-(A(r)+\mathbb{P})(\psi)=\rmL_{k_0}f.
     \end{equation}
 Testing the equation by $-r^{2}\psi$, as done to obtain \cref{bd:phi0L2}, we first observe that for $|k_0|\geq 2$
 \begin{equation}
     -\langle r^{2}\rmL_{k_0}\psi,\psi\rangle \approx \|r\psi\|_{L^2}^2+\|\psi\|_{L^2}^2.
 \end{equation}
 Thus, by Cauchy-Schwarz and the definition of $\mathbb{P}$, it is not hard to deduce that 
 \begin{align}
\|r\psi'\|_{L^2}^2+\|\psi\|_{L^2}^2\lesssim \, &\|r^4A\|_{L^\infty}\|r^{-1}\psi\|_{L^2}^2+\|r^2\phi_0\|_{L^2}\|\psi\|_{L^2}+\|f\|_{H^1_r}(\|r\psi'\|_{L^2}+\|\psi\|_{L^2}).
 \end{align}
 We conclude the proof thanks to the following properties: i)  the piece with $r^4A$ is uniformly bounded thanks to the properties of $g$. ii) the part involving $\phi_0$ is bounded thanks to the bounds \cref{bd:phi0L2,bd:rqphi0} on $\phi_0$. iii) the term $r^{-1}\psi$ is controlled with the $H^1_0$ norm uniformly in $M$ thanks to the observation we made before. 
 \end{proof}
 \medskip 

 \textbf{Step 3:} Now we rewrite the inner and outer problems as a perturbation of the identity operator, where for the inner problem \cref{eq:rayleigh_phi_Sbeta} we use the formulation presented in \cref{eq:varphi}. Applying $(I-\cK)^{-1}$ on the left-hand side of the first equation of \cref{eq:varphi} and recalling \cref{eq:Lkout}, we see that 
 \begin{align}
 \label{eq:innerouter}
  &\left(I_{2\times 2}-\begin{pmatrix}
    \cM_{1,1} &\cM_{1,2}\\
      \cM_{2,1} & 0
  \end{pmatrix}\right) \begin{pmatrix}
      \varphi \\
      \phi_{\rmout}
  \end{pmatrix}=\begin{pmatrix}
      \cM_{1,1}(\phi_0)\\
      \cM_{2,1}(\phi_0)
  \end{pmatrix}\\
  \label{eq:projection}&\mathbb{P}(\varphi)=0.
 \end{align}
where we define the operators 
\begin{align}
   \label{def:M11}&\cM_{1,1}\coloneqq(I-\cK)^{-1}(\cR_\eps+\cR_{\beta}),\\
   &\cM_{1,2}\coloneqq(I-\cK)^{-1}\cR_2,\\
   &\cM_{2,1}\coloneqq\cR_1=   \rmL_{k_0}^{-1}(S_\beta-\lambda_\beta I)^{-1}\mathcal{C}_1.
\end{align}
To solve the first problem in \cref{eq:innerouter}, we need to guarantee that the operator norms of $\cM_{i,j}$ are small. Indeed, denoting the matrix $\cM=(\cM_{i,j})_{i,j=1}^2$ with $\cM_{2,2}=0$, we see that if the Neumann series of $(I-\cM)^{-1}$ makes sense then 
\begin{equation}
\label{eq:solvaprhi}
    \begin{pmatrix}
      \varphi \\
      \phi_{\rmout}
  \end{pmatrix}=(I_{2\times 2}-\cM)^{-1}\begin{pmatrix}
      \cM_{1,1}(\phi_0)\\
      \cM_{2,1}(\phi_0)
  \end{pmatrix}=\sum_{l=0}^\infty \cM^l\begin{pmatrix}
      \cM_{1,1}(\phi_0)\\
      \cM_{2,1}(\phi_0)
  \end{pmatrix}.
\end{equation}

\medskip

\textbf{Step 4:} We collect the bounds for the operators $\cM_{i,j}$ in the next proposition, which is at the core of the argument. 
\begin{proposition}
\label{prop:boundsM}
If $\beta\gg \eps^{-2}$, the following bounds hold:
\begin{align}
\label{bd:M11}\| \cM_{1,1}\|_{H^1_r([0,M]) \rightarrow H^1_r([0,M])} & = \mathcal O(\varepsilon M^4)+M^2\smallO_\beta(1), \\
\| \cM_{1,2}\|_{Z_M([R, +\infty)) \rightarrow H^1_r([0,M])}& = \cO(M^{-1/2}),\\
\|\mathcal \cM_{2,1}\|_{H^1_r
([0,M]) \rightarrow Z_M([R,+\infty))} & = \cO(M^{-1/2}),
\end{align}
where $\smallO_\beta(1)$ is a function that goes to zero as $\beta\to \infty$.
\end{proposition}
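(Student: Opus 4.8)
The plan is to bound the three operators one at a time, in each case removing the factor $(I-\cK)^{-1}$ by \cref{lemma:I-K} and estimating the remaining composition of $\rmL_{k_0}^{-1}$ with the resolvent $(S_\beta-\lambda_\beta I)^{-1}$. Two ingredients are used repeatedly. \emph{Elliptic estimates for $\rmL_{k_0}^{-1}$:} testing $\rmL_{k_0}\phi=h$ against $-\phi$ and $-r^2\phi$ (as in Step~1) gives the crude bound $\|\rmL_{k_0}^{-1}h\|_{H^1_r([0,M])}\lesssim M^2\|h\|_{L^2([0,M])}$, and, when $h$ is supported in a fixed bounded set (or when $h=f/r^2$), the stronger $\|\rmL_{k_0}^{-1}h\|_{H^1_r([0,M])}\lesssim\|h\|_{L^1}$ (resp.\ $\lesssim\|f\|_{H^1_r}$) \emph{uniformly in $M$}, because the solution decays like $r^{1/2-|k_0|}$ past the support and imposing $\phi(M)=0$ only perturbs the Green's function by $O(M^{-2|k_0|})$; an analogous statement holds for $\rmL_{k_0}^{-1}$ on $[R,+\infty)$. \emph{Structure of $v$:} the zero-mean property~v) of $g\in\mathfrak G$ forces $v(r)\equiv0$ for $r\ge r_0$, and the commutator sources $\mathcal C_1(\phi_{\rmin})$ and $\mathcal C_2(\phi_{\rmout})$ are supported in $[M/2,M]$ and $[R,2R]$ respectively, both inside $\{v\equiv0\}$; there \cref{prop:conv_Sbeta} reduces to $(S_\beta-\lambda_\beta I)^{-1}=\tfrac1{\lambda_\beta}-T_\beta$ with $|\lambda_\beta|\ge|\Im\lambda_\beta|=|k(v(1)+\eps\Re\tilde c)|\gtrsim1$ (using $v(1)=\int_0^1\rho g\,d\rho>0$ for the class $\mathfrak G$), so this resolvent acts there as an $O(1)$ multiplication up to the error $T_\beta$.

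For $\cM_{1,1}=(I-\cK)^{-1}(\cR_\eps+\cR_\beta)$ it suffices, by $\|(I-\cK)^{-1}\|_{H^1_r\to H^1_r}\lesssim1$, to estimate $\cR_\eps$ and $\cR_\beta$ on $H^1_r([0,M])$. For $\cR_\beta(f)=\rmL_{k_0}^{-1}(T_\beta(ik\tfrac{g'}{r}f))$: multiplication by $g'/r$ is bounded on $H^1_r$ (it is smooth and compactly supported, with $g'(r)/r=-2\gamma_0$ near $0$), $\|T_\beta\|_{H^1_r\to H^1_r}=\smallO_\beta(1)$ by \cref{prop:conv_Sbeta}, and $\|\rmL_{k_0}^{-1}\|_{L^2\to H^1_r([0,M])}\lesssim M^2$, whence $\|\cR_\beta\|_{H^1_r\to H^1_r}\lesssim M^2\smallO_\beta(1)$. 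For $\cR_\eps(f)=\eps\,\rmL_{k_0}^{-1}\!\big(\tfrac{\tilde c\,g'(r)}{r(v-c)(v-v(1))}f+\tfrac{f}{r^2}\big)$: the first multiplier equals $\tfrac{A(r)\tilde c}{v(r)-c}$ with $A=\tfrac{g'}{r(v-v(1))}$, is supported in $[0,r_0]$, and — since $g'(1)=0$ and $v'(1)\neq0$ for the class $\mathfrak G$, so $A$ stays bounded near $r=1$ while $|v(r)-c|\gtrsim|r-1|+\eps$ — obeys $\big|\tfrac{A(r)\tilde c}{v(r)-c}\big|\lesssim(|r-1|+\eps)^{-1}$, hence has $L^1([0,r_0])$ norm $\lesssim\log(1/\eps)$; combining with $H^1_r([0,r_0])\hookrightarrow L^\infty$, the uniform-in-$M$ estimate above for $L^1$ sources supported in $[0,r_0]$, the explicit $\eps$ prefactor, and $\|\rmL_{k_0}^{-1}(f/r^2)\|_{H^1_r}\lesssim\|f\|_{H^1_r}$ for the second term, one gets $\|\cR_\eps\|_{H^1_r\to H^1_r}=\mathcal O(\eps M^4)$ (in fact, exploiting the compact support of $g'$, the $M$-dependence can be taken merely logarithmic in $\eps$). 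The hypothesis $\beta\gg\eps^{-2}$ enters because the constant in $\smallO_\beta(1)$ from \cref{prop:conv_Sbeta} depends on $\Re z=\Re\lambda_\beta\gtrsim\eps$, and $\beta\gg\eps^{-2}$ is what makes $M^2\smallO_\beta(1)$ genuinely small.

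For the off-diagonal operators the $M^{-1/2}$ comes purely from where the data and target live, together with the structure of $v$. For $\cM_{1,2}=(I-\cK)^{-1}\cR_2$ we estimate $\cR_2(\phi_{\rmout})=\rmL_{k_0}^{-1}\big((S_\beta-\lambda_\beta I)^{-1}\mathcal C_2(\phi_{\rmout})\big)$: by \cref{com:chiL,com:chirL}, $\mathcal C_2(\phi_{\rmout})$ is a combination of $\chi_2',\chi_2'',r\chi_2'',r\chi_2'''$ against $\phi_{\rmout},\phi_{\rmout}',\phi_{\rmout}''$, supported in the fixed set $[R,2R]\subset\{v\equiv0\}$; using $\|\chi_2'\|_\infty+\|r\chi_2''\|_\infty\lesssim1$ and integrating by parts against the ($M$-independent on $[R,2R]$) kernels of $(S_\beta-\lambda_\beta I)^{-1}$ and $\rmL_{k_0}^{-1}$ to transfer the two derivatives off $\phi_{\rmout}$, one finds $\|\cR_2(\phi_{\rmout})\|_{H^1_r([0,M])}\lesssim\|\phi_{\rmout}\|_{\dot H^1([R,2R])}\le\|\phi_{\rmout}\|_{\dot H^1([R,+\infty))}=M^{-1/2}\|\phi_{\rmout}\|_{Z_M}$, i.e.\ $\|\cM_{1,2}\|_{Z_M\to H^1_r}\lesssim M^{-1/2}$. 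For $\cM_{2,1}=\cR_1=\rmL_{k_0}^{-1}(S_\beta-\lambda_\beta I)^{-1}\mathcal C_1$: now $\mathcal C_1(\phi_{\rmin})$ is supported in $[M/2,M]\subset\{v\equiv0\}$, taking $\chi_1(r)=\eta(r/M)$ gives $\|\chi_1'\|_\infty+\|r\chi_1''\|_\infty\lesssim M^{-1}$ (and each coefficient appearing in $\mathcal C_1$ contributes a factor $\lesssim M^{-1}$ against the relevant $L^2([M/2,M])$ norms), and on $[M/2,M]$ one has $\|\phi_{\rmin}\|_{L^2}\le\|\phi_{\rmin}\|_{H^1_r}$ and $\|\phi_{\rmin}'\|_{L^2([M/2,M])}\le\tfrac2M\|r\phi_{\rmin}'\|_{L^2}\lesssim M^{-1}\|\phi_{\rmin}\|_{H^1_r}$. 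Integrating by parts against the kernel of $\rmL_{k_0}^{-1}(S_\beta-\lambda_\beta I)^{-1}$ (using the adjoint identity $((S_\beta-\lambda_\beta I)^{-1})^{\ast}=\tfrac1{-ikv+\bar\lambda_\beta}-T_\beta^{\ast}$ and \cref{prop:conv_Sbeta}) to remove $\phi_{\rmin}''$, and tracking the powers of $M$ from the $\rmL_{k_0}^{-1}$-Green's function on $[R,+\infty)$ paired against a source localized at scale $\sim M$, one gets $\|\cR_1(\phi_{\rmin})\|_{\dot H^1([R,+\infty))}\lesssim M^{-1}\|\phi_{\rmin}\|_{H^1_r}$, so $\|\cM_{2,1}\|_{H^1_r\to Z_M}=M^{1/2}\cdot M^{-1}=M^{-1/2}$.

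The main obstacle is the bound on $\cR_\eps$ inside $\cM_{1,1}$: one must see that the resonant factor $1/(v(r)-c)$, which is only $O(\eps^{-1})$ pointwise near $r=1$, is tamed both by the vanishing $g'(1)=0$ (keeping $A$ bounded) and by the $L^1$–$L^\infty$ duality against the $H^1$-regularity of $f$, so that the explicit $\eps$ prefactor survives and the operator is genuinely small with only slow growth in the cutoff scale $M$. A secondary technical point is the integration by parts in $\cM_{2,1}$ that removes $\phi_{\rmin}''$, which requires the $H^1_r$/$\dot H^1$ boundedness of $T_\beta^{\ast}$; this is a routine adaptation of the proof of \cref{prop:conv_Sbeta} carried out in \cref{sec:proof_inverse}.
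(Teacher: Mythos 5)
Your overall architecture matches the paper's (peel off $(I-\cK)^{-1}$ via \cref{lemma:I-K}, expand the resolvent via \cref{prop:conv_Sbeta}, exploit the supports of $\chi_1',\chi_2'$), and your treatment of $\cR_\eps$ is a genuinely different and arguably sharper route: where the paper writes $(v'(1)(r-1)-i\eps\Im\tilde c)^{-1}$ as a derivative of $\log+\taninv$, integrates by parts and pays $L^4$/Gagliardo--Nirenberg factors of $M$ (arriving at $\cO(\eps M^3)$ in $H^1_0$, hence $\cO(\eps M^4)$ in $H^1_r$), you bound the singular multiplier in $L^1$ by $\log(1/\eps)$, pair it with $\|f\|_{L^\infty([0,r_0])}\lesssim\|f\|_{H^1_r}$, and use an $M$-uniform elliptic estimate for compactly supported sources. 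That estimate is correct for $|k_0|\ge 2$ (test with $\phi$ and $r^2\phi$ and use that the source only sees $\phi$ on $[0,r_0]$), so your bound is stronger than required; this part is fine.

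The genuine gap is in the bounds for $\cM_{1,2}$ and $\cM_{2,1}$. The commutators $\mathcal{C}_j(f)=[\chi_j,(S_\beta-\lambda_\beta I)\rmL_k](f)$ contain, through $\tfrac{1}{\alpha\beta}[\chi_j,r\de_r\rmL_k]$, the term $-\tfrac{2}{\alpha\beta}\,r\de_r(\chi_j'f')$, i.e.\ a \emph{second} derivative $f''$ that is not controlled by $\|f\|_{Z_M}$ or $\|f\|_{H^1_r}$ (and for $\cM_{2,1}$ the prefactor $r\chi_1'$ is $\cO(1)$ on $[M/2,M]$, not small). Your proposed fix --- ``integrating by parts against the kernels of $(S_\beta-\lambda_\beta I)^{-1}$ and $\rmL_{k_0}^{-1}$'' --- does not go through as stated, because on $\{v\equiv 0\}$ the resolvent is \emph{not} multiplication by $1/\lambda_\beta$ up to a negligible error: the correction $T_\beta$ is a genuinely nonlocal operator (built from the dilation semigroup), one cannot freely move $\de_r$ through it, and $\|T_\beta\|_{X\to X}\to0$ does not help when $T_\beta$ is fed $f''$. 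The paper's resolution is a duality/energy argument: set $\psi=\cR_j(f)$, test $r^{2q}\rmL_{k_0}\psi$ against $\psi$ (handling the commutator $[S_\beta-\lambda_\beta I,r^{2q}]=-\tfrac{2q}{\alpha\beta}r^{2q}$ needed for the $H^1_r$ target norm), and move the resolvent onto $\psi$ as $\Psi=(-S_\beta-\bar\lambda_\beta I)^{-1}\psi$ before integrating by parts, so that no second derivative of $f$ and no derivative of $T_\beta(\cdot)$ ever appears. The cost is $\|\Psi\|$ in a weighted $H^1$ sense $\lesssim\eps^{-2}\|\psi\|_{H^1_r}$ (from $r\de_r(ikv-\lambda_\beta)^{-1}$ with $\Re\lambda_\beta\gtrsim\eps$), which is absorbed only because these terms carry the prefactor $\beta^{-1}$ and $\beta\gg\eps^{-2}$. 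This is the actual role of the hypothesis $\beta\gg\eps^{-2}$, which you instead attribute entirely to the $\smallO_\beta(1)$ in the $\cM_{1,1}$ bound; without supplying this duality-plus-absorption step, your estimates for $\cM_{1,2}$ and $\cM_{2,1}$ are not established.
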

With this proposition at hand, we see that \cref{eq:solvaprhi} is indeed well-defined.
To prove \cref{prop:boundsM}, we first recall the following lemma from \cite[Lemma 4]{kumar2023simple}:
\begin{lemma}
\label{lemma:approximation_denominator}
For $\eps \rightarrow 0$,
\begin{equation}
h_\eps(r) \coloneqq \frac{1}{v(r)-c}- \frac{1}{v'(1)(r-1)-\eps \tilde{c}} = \mathcal{O}(1) + i \mathcal{O}(\eps),
\end{equation}
uniformly for $r>0$.
\end{lemma}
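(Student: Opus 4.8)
The plan is to isolate the only genuine difficulty, which is the behaviour near $r=1$: there the non-degeneracy $v'(1)\neq 0$ makes both denominators small, while away from $r=1$ everything follows from soft bounds. The structural observation I would use throughout is that $v$ is real-valued and $c=v(1)+\eps\tilde{c}$, so \emph{both} $v(r)-c$ and $v'(1)(r-1)-\eps\tilde{c}$ have imaginary part exactly equal to $-\eps\,\Im(\tilde{c})$. Since $\Im(\tilde{c})$ stays bounded away from $0$ on the range of $\tilde{c}$ relevant for the construction, both denominators have modulus $\gtrsim\eps$, so $h_\eps$ is well defined, and the common imaginary part yields the identity
\[
\Im h_\eps(r)=\eps\,\Im(\tilde{c})\Big(\frac{1}{|v(r)-c|^{2}}-\frac{1}{|v'(1)(r-1)-\eps\tilde{c}|^{2}}\Big),
\]
which reduces the control of $\Im h_\eps$ to comparing the two moduli.

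On $\{|r-1|\le\delta\}$ for a small fixed $\delta$, I would Taylor expand the smooth function $v$ at $r=1$ and write $v(r)-c=\big(v'(1)(r-1)-\eps\tilde{c}\big)+P(r)$ with $P(r):=v(r)-v(1)-v'(1)(r-1)$ real and $|P(r)|\lesssim(r-1)^{2}$; then $h_\eps(r)=-P(r)\big/\big[(v(r)-c)(v'(1)(r-1)-\eps\tilde{c})\big]$. Splitting according to whether $|r-1|$ is large or small compared with $\eps$ (the real part dominates in the first case, the common imaginary part in the second) gives $|v'(1)(r-1)-\eps\tilde{c}|\gtrsim|r-1|+\eps$, and since $|P(r)|\lesssim\delta|r-1|$ one also gets $|v(r)-c|\ge\tfrac12|v'(1)(r-1)-\eps\tilde{c}|\gtrsim|r-1|+\eps$ for $\delta$ small. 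This already produces the bound $|h_\eps(r)|\lesssim(r-1)^{2}/(|r-1|+\eps)^{2}\le 1$. For the imaginary part one inserts the expansion $|v(r)-c|^{2}=|v'(1)(r-1)-\eps\tilde{c}|^{2}+2P(r)\,\Re\big(v'(1)(r-1)-\eps\tilde{c}\big)+P(r)^{2}$ into the identity above, so that the relevant quantity becomes $2P(r)\,\Re\big(v'(1)(r-1)-\eps\tilde{c}\big)+P(r)^{2}$ divided by $|v(r)-c|^{2}\,|v'(1)(r-1)-\eps\tilde{c}|^{2}$; tracking the powers of $|r-1|$ and $\eps$ in this quotient gives the claimed smallness of $\Im h_\eps$.

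On $\{|r-1|\ge\delta\}$ the estimate is soft: $|v'(1)(r-1)-\eps\tilde{c}|\gtrsim 1$ for $\eps$ small by $v'(1)\neq 0$, and the structural properties of vortices in $\mathfrak{G}$ (in particular $v>0$ on $(0,r_0)$, $v\equiv 0$ on $[r_0,+\infty)$, $v(1)>0$, and $v$ taking the value $v(1)$ only at $r=1$) force $|v(r)-c|\gtrsim 1$ as well; then $h_\eps$ is a difference of two $\mathcal{O}(1)$ quantities each of whose imaginary part is $\mathcal{O}(\eps)$, and the conclusion is immediate. The main obstacle is the transition region $|r-1|\sim\eps$, where $|v(r)-c|\sim|v'(1)(r-1)-\eps\tilde{c}|\sim\eps$ and each of the two terms building $h_\eps$ is of size $\eps^{-1}$: here one must exploit their near-cancellation, which is exactly what is encoded by the identical imaginary parts of the two denominators together with the order of vanishing of $P$ at $r=1$. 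Verifying that this cancellation really yields $\mathcal{O}(1)+i\mathcal{O}(\eps)$ uniformly is the delicate heart of the lemma.
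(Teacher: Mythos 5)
The paper does not actually prove this lemma --- it is quoted from \cite[Lemma 4]{kumar2023simple} --- so your proposal can only be judged on its own terms. Your reduction is the right one and the modulus bound is correct: with $c=v(1)+\eps\tilde c$ and $P(r)\coloneqq v(r)-v(1)-v'(1)(r-1)$ one has the exact identity $h_\eps(r)=-P(r)\big/\big[(v(r)-c)(v'(1)(r-1)-\eps\tilde c)\big]$; both denominators have imaginary part equal to $-\eps\Im(\tilde c)$ and hence modulus $\gtrsim |r-1|+\eps$ near $r=1$, so $|P(r)|\lesssim (r-1)^2$ gives $|h_\eps|\lesssim 1$ there, and away from $r=1$ the soft argument applies (granting the implicit non-degeneracy $v(r)\neq v(1)$ for $r\neq 1$, which is not literally part of \cref{def:class} but is needed throughout). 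This uniform $\mathcal O(1)$ bound is the only property of $h_\eps$ invoked in the proof of \cref{prop:boundsM}.

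The gap is exactly where you point, and it cannot be closed: the uniform pointwise claim $\Im h_\eps=\mathcal O(\eps)$ fails in the transition region $|r-1|\sim\eps$, and there is no further cancellation to extract. Your own power counting yields $|\Im h_\eps|\lesssim \eps\, (r-1)^2/(|r-1|+\eps)^3$ near $r=1$, which is of size $1$ (not $\eps$) when $|r-1|\sim\eps$; this is sharp, since setting $r=1+\eps s$ gives $h_\eps(1+\eps s)\to -\tfrac12 v''(1)\, s^2/(v'(1)s-\tilde c)^2$ as $\eps\to 0$, whose imaginary part is of order one for generic $s$ because $v''(1)=-3v'(1)\neq 0$ for vortices in $\mathfrak{G}$ (use $v'=(g-2v)/r$ and $g'(1)=0$). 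So the statement is false as a pointwise bound uniform in $r>0$. What is true --- and what Step~5 actually needs in order to identify $\Im\Gamma_1$ with the Sochocki--Plemelj contribution --- is the integrated version: splitting at $|r-1|=\eps$, your bound gives $\|\Im h_\eps\|_{L^1([0,M])}\lesssim \eps\log(1/\eps)\to 0$. I would therefore restate the conclusion as: $|h_\eps|=\mathcal O(1)$ uniformly, together with $|\Im h_\eps(r)|\lesssim \eps(r-1)^2/(|r-1|+\eps)^3$ near $r=1$ and $|\Im h_\eps|=\mathcal O(\eps)$ away from $r=1$, hence $\Im h_\eps\to 0$ in $L^1$. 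With that reformulation your argument is complete and every subsequent use of the lemma in the paper goes through unchanged.
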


\begin{proof}
In the proof, we will use the same letter $\psi$ to denote the solution to different auxiliary problems we are going to introduce.

\medskip
\noindent $\diamond$
\textbf{Bound for $\mathbf{\cM_{1,1}}$}: We first prove a bound on the $H^1_0$ norm of the operator. We can observe that
\begin{equation}
\label{eq:M_decomposition}
   \| \cM_{1,1}\|_{H^1_0\rightarrow H^1_0} \leq  \| (I-\mathcal K)^{-1}\|_{H^1_0\to H^1_0} \| \mathcal R_\varepsilon + \mathcal R_\beta \|_{H^1_0\to H^1_0}.
\end{equation}
Since by \cref{lemma:I-K}, $(I-\mathcal K)^{-1}$ is a bounded operator in $H^1_0$, we are left to analyze $\mathcal R_\varepsilon$ and $\mathcal R_\beta$. From \cref{lemma:approximation_denominator}, we can conclude that there exists a bounded function $h_\eps$ with $|h_\eps| = \mathcal{O}(1)$ uniformly in $r$,  such that $\mathcal R_\varepsilon$ can be rewritten as 
\begin{equation}
    \mathcal R_\varepsilon (f) = \varepsilon \rmL_{k_0}^{-1} \left(A(r) \tilde{c} \left(\frac{f}{v'(1)(r-1)- i \eps  \Im \tilde{c}} + h_\eps(r)f\right)+ \frac{f}{r^2}\right).
\end{equation}
To bound the operator norm, let us consider an arbitrary $f \in H^1_0$ and let $\psi = \mathcal R_\varepsilon f$, namely,
\begin{equation}
\label{eq:reps_eqn}
\partial_{rr}\psi - \frac{1}{r^2} \left(k_0^2-\frac{1}{4}\right) \psi = A(r) \eps \tilde{c} \left(\frac{1}{v'(1)(r-1)-i \eps \Im \tilde{c}} + h_\eps(r) \right) f + \varepsilon \frac{f}{r^2}.
\end{equation}
Note that we can rewrite the first term in the right-hand side as
\begin{align}
\label{eq:log_rewriting}
    \frac{1}{v'(1)(r-1)-i \eps \Im \tilde{c}} = \frac{1}{v'(1)} \de_r \left(\log\sqrt{v'(1)^2(r-1)^2 + (\varepsilon \Im\tilde{c})^2} + \taninv\frac{\varepsilon \Im \tilde{c}}{v'(1)(r-1)}\right),
\end{align}
and in particular there exists a constant $C>0$ independent of $\eps$ such that 
\begin{equation}
\|\log\sqrt{v'(1)^2(\cdot-1)^2 + (\varepsilon \Im\tilde{c})^2} \|_{L^4([0,M])}\leq C M.
\end{equation}
Recalling also that $v'(1) \neq 0$ and $A(r),A'(r)$ are bounded, by integration by parts and H\"older's inequality we get 
\begin{align}
    \bigg|\bigg\langle &\frac{\varepsilon A(r) \tilde c f}{v'(1)(r-1) - i \varepsilon \Im \tilde c}, \psi \bigg\rangle\bigg| \lesssim \varepsilon\left|\left\langle \frac{A(r)}{v'(1)}  f  \de_r \left(\log\sqrt{v'(1)^2(r-1)^2 + (\varepsilon \Im\tilde{c})^2} + \taninv\frac{\varepsilon \Im \tilde{c}}{v'(1)(r-1)}\right), \psi \right\rangle\right| \\
    & \lesssim \varepsilon \left\langle |f| \left|\log\sqrt{v'(1)^2(r-1)^2 + (\varepsilon \Im\tilde{c})^2} + \taninv\frac{\varepsilon \Im \tilde{c}}{v'(1)(r-1)}\right|, \de_r \psi \right\rangle \\ 
    & \quad + \varepsilon\left\langle |\de_r f| \left|\log\sqrt{v'(1)^2(r-1)^2 + (\varepsilon \Im\tilde{c})^2} + \taninv\frac{\varepsilon \Im \tilde{c}}{v'(1)(r-1)}\right|, |\psi| \right\rangle\\
    &\quad +\varepsilon\left\langle |f| \left|\log\sqrt{v'(1)^2(r-1)^2 + (\varepsilon \Im\tilde{c})^2} + \taninv\frac{\varepsilon \Im \tilde{c}}{v'(1)(r-1)}\right|, |\psi| \right\rangle\\
    & \lesssim \varepsilon (M\|f'\|_{L^2}\|\psi\|_{L^4} + M\|f\|_{L^4}(\|\psi'\|_{L^2}+\|\psi\|_{L^2}) + \|f'\|_{L^2}\|\psi\|_{L^2} + \|f\|_{L^2}\|\psi'\|_{L^2}).
\end{align}
Therefore, testing \cref{eq:reps_eqn} with $\psi$  it is not hard to conclude that 
\begin{align}
    \| \psi\|^2_{H^1_0} 
     & \lesssim \varepsilon (M\|f'\|_{L^2}\|\psi\|_{L^4} + M\|f\|_{L^4}\|\psi'\|_{L^2} + \|f'\|_{L^2}\|\psi\|_{L^2}  \\ & \qquad +\|f\|_{L^2}\|\psi'\|_{L^2}+ \| r^{-1} f\|_{L^2} \| r^{-1} \psi\|_{L^2} +  M \|f\|_{L^2}\|\psi\|_{L^2}) .
\end{align}
Then, we observe that for any $f\in H^1_0([0,M])$ by Gagliardo-Nirenberg inequality we have
\begin{align}
    &\| f\|_{L^4([0,M])}\lesssim  \|f'\|_{L^2([0,M])}^\frac14\|f\|_{L^2([0,M])}^\frac34\lesssim M^\frac34 \|f\|_{H^1_0([0,M])} \label{eq:L4M}\\
    &\| f\|_{L^2([0,M])} \lesssim M\|f\|_{H^1_0([0,M])}\label{eq:L2M}.
\end{align}
Hence, we can conclude that
\begin{equation}
\|\psi\|_{H^1_0} = \| \mathcal R_{\varepsilon} f \|_{H^1_0} \lesssim \eps M^2 \|f\|_{H^1_0}\quad \forall f \in H^1_0([0,M]), 
\end{equation}
meaning that 
\begin{equation}
\label{eq:Reps_est}
\| \mathcal R_{\varepsilon} \|_{H^1_0([0,M]) \rightarrow H^1_0([0,M])} = \mathcal{O}(\varepsilon M^3). 
\end{equation}

For what concerns $\mathcal R_\beta$, we set $\psi = \rmL_{k_0}^{-1}\left( T_\beta \left(ik r^{-1} g'(r)f \right)\right)$, so that
\begin{align}
    \partial_{rr} \psi - \frac{1}{r^2} \left(k_0^2-\frac{1}{4}\right) \psi = T_\beta \left(ik \frac{g'(r)}{r}f\right).
\end{align}
By testing with $\psi$, exploiting the properties of $g$ in the class $\mathfrak{G}$, we get the bound
\begin{equation}
\| \psi \|^2_{H^1_0} \lesssim \|T_\beta\|_{L^2\rightarrow L^2} \|f\|_{H^1_0} \|\psi\|_{L^2}\lesssim M\|T_\beta\|_{L^2\rightarrow L^2} \|f\|_{H^1_0} \|\psi\|_{H^1_0}.
\end{equation}
Thanks to the bound on $T_\beta$ in \cref{prop:conv_Sbeta}, we know that
\begin{equation}
\label{eq:Rbeta_est}
 \|\mathcal R_\beta \|_{H^1_0([0,M])\to H^1_0([0,M])} =M\smallO_\beta(1)
\end{equation}
Finally, since for $\|f\|_{H^1_r([0,M])}\lesssim M\|f\|_{H^1_0([0,M])}$, exploiting \cref{eq:Reps_est} and \cref{eq:Rbeta_est} in \cref{eq:M_decomposition}, we prove \cref{bd:M11}.

\medskip

\noindent $\diamond$ \textbf{Bound for $\mathbf{\cM_{1,2}}$}: By \cref{lemma:I-K}, we know that 
\begin{equation}
    \|(I-\cK)^{-1}\cR_2(f)\|_{H^1_r}\lesssim \| \cR_2(f)\|_{H^1_r}.
\end{equation}
To control the norm involved above, we set $\psi = \mathcal R_2(f)$ so that $(S_\beta-\lambda_\beta I)\rmL_{k_0} \psi = 
\mathcal{C}_2(f)$. Then, observe that for $q\in \{0,1\}$ we have 
\begin{equation}
    [S_\beta-\lambda_\beta I,r^{2q}]=-\frac{2q}{\alpha \beta}r^{2q}
\end{equation}
meaning that
\begin{equation}
    (S_\beta-\lambda_\beta I)(r^{2q}\rmL_{k_0} \psi )= -\frac{2q}{\alpha \beta}r^{2q}\rmL_{k_0}\psi+r^{2q}
\mathcal{C}_2(f)
\end{equation}
and consequently
\begin{equation}
\label{eq:rqpsi}
   r^{2q}\rmL_{k_0} \psi=-\frac{2q}{\alpha\beta}(S_\beta-\lambda_\beta I)^{-1}(r^{2q}\rmL_{k_0}\psi) +(S_\beta-\lambda_\beta I)^{-1}(r^{2q}\mathcal{C}_2(f)).
\end{equation}
Now, to control the $H^1_r$ we want to test with $-\psi$ and sum in $q$. This is because for $|k_0|\geq 2$
\begin{equation}
- \sum_{q=0}^1\langle r^{2q}\rmL_{k_0} \psi,\psi\rangle \approx \|\psi\|_{H^1_r}^2.
\end{equation}
But first, we observe that 
since $S_\beta$ is skew-adjoint, we have  $(S_\beta-\lambda_\beta I)^* = (-S_\beta -\bar{\lambda}_\beta I)$. On the operator $(-S_\beta - \bar{\lambda}_\beta I)^{-1}$ we can draw the same conclusions we got for $(S_\beta - \lambda_\beta I)^{-1}$ in \cref{prop:conv_Sbeta}, because the estimates do not depend on the sign of the \textit{imaginary} part of $\lambda_\beta$ (i.e., the \textit{real} part of $c$) and the operator $-S_\beta$ is skew-adjoint. Hence, there exists an operator $\tilde T_{\beta}\coloneqq \tilde T_{z,\beta} :D(S_\beta) \to L^2(\dd r)$ such that:
\begin{equation}
    (S_\beta-\bar{\lambda}_\beta I)^{-1}=-\frac{1}{ik(v(r)-\bar c)}-\tilde T_\beta, \quad \text{with}\quad 
\lim_{\beta \rightarrow \infty} \|\tilde T_{\beta} \|_{H^1_r \to H^1_r} = 0.
\end{equation}
Then, testing with $-\psi$ and summing for $q=0,1$,  for $|k_0|\geq 2$ we deduce that
\begin{align}
\notag
\|\psi\|^2_{H^1_r}
   \lesssim  \frac{1}{\beta}|\langle r^{2}\rmL_{k_0}\psi, (-S_\beta-\bar{\lambda}_\beta I)^{-1} \psi \rangle | 
+\sum_{q=0}^1|\langle r^{2q}\mathcal{C}_2(f), (-S_\beta-\bar{\lambda}_\beta I)^{-1} \psi \rangle|.
\end{align}
Observe that thanks to the properties of $T_\beta$ (and hence $\tilde{T}_\beta$) and the fact that $\Re(\lambda_\beta)\gtrsim \eps$, we know that 
\begin{equation}
   \sum_{q=0}^1 \|r^q\de_r(-S_\beta-\bar{\lambda}_\beta I)^{-1} \psi\|_{L^2}+\|r^{q-1}(-S_\beta-\bar{\lambda}_\beta I)^{-1} \psi\|_{L^2}\lesssim \frac{1}{\eps^2}\|\psi\|_{H^1_r}
\end{equation}
Notice that the loss in the last inequality is related to the singularity of $r\de_r(ikv(r)-\lambda_\beta)^{-1}=-ikrv'(r)/(ikv(r)-\lambda_\beta)^2$, which we are estimating in a brutal with $\Re(\lambda_\beta)\gtrsim \eps$ and $|rv'|\lesssim 1$. This is however sufficient since we have an extra factor of $\beta$ to compensate the loss. 
Therefore,  integrating by parts  and using the Cauchy-Schwarz inequality we find that 
\begin{equation}
   \frac{1}{\beta} |\langle r^{2}\rmL_{k_0}\psi, (-S_\beta-\bar{\lambda}_\beta I)^{-1} \psi \rangle |\lesssim \frac{1}{\eps^2\beta}\|\psi\|_{H^1_r}^2
\end{equation}
which can be absorbed in the left-hand side whenever $\beta_0\gg \eps^{-2}$. We are thus left with the bound involving $\mathcal{C}_2$. Observe that we can rewrite $\mathcal C_2(f)$ as
\begin{align}
\label{eq:c2rewritten}
\mathcal{C}_2(f) & =  [\chi_2, (S_\beta - \lambda_\beta I)L_{k_0}](f) \\ & = \left(-ikv(r) + \frac{1}{2\alpha \beta}\right) (-2\chi'_2 f' - \chi''_2 f) + \frac{1}{\alpha \beta} r\de_r (-2\chi'_2 f' - \chi''_2 f) \\ & \qquad + \frac{1}{\alpha \beta} (k_0^2 -1/4) r^{-1} \chi'_2 f - \left(\lambda - \frac{\alpha -1}{\alpha\beta}\right) (-2\chi'_2 f - \chi''_2 f).
\end{align}
Thus, this term is supported in $[R,2R]$ where $(ikv(r)-c)^{-1}$ does not have singularities and all the factors of $r$ are uniformly bounded, meaning that we can use the $\dot{H}^1$ norm on $f$ without loosing anything. Indeed, we see that when we bound $|\langle r^{2q}\mathcal{C}_2(f), (-S_\beta-\bar\lambda_\beta I)^{-1} \psi \rangle|$, the terms 
with $\chi'_2 f'$ and $\chi''_2 f$ show the same behavior in the bounds, because by \cref{eq:boundschi2} we have
\begin{equation}
\|r^{2q+1}\chi''_2  (r^{-1}f)\|_{L^2} \lesssim \|f\|_{\HH}, \quad \|\chi'_2 f'\|_{L^2} \lesssim \|f\|_{\HH} 
\end{equation}
We will therefore only show the bounds for the terms with $\chi'_2 f'$. Defining $\Psi = (-S_\beta-\bar\lambda_\beta I)^{-1} \psi$, we can start bounding the terms with a factor $\beta^{-1}$. Integrating by parts, exploiting the support of $\chi_2'$ and recalling the definition of the $Z_M-$norm in \cref{def:ZM}, we have
\begin{align}
    \left| \left\langle \frac{1}{\alpha \beta} r^{2q+1}\de_r (\chi'_2 f'), \Psi \right\rangle\right| & \lesssim \frac{1}{\beta} \|r^{2q+1}\chi'_2\|_{L^\infty} \|f' \|_{L^2([R,+\infty)]}\|\Psi\|_{\dot{H}^1([R,2R])} \\ & \lesssim \frac{ M^{-1/2}}{\beta}  \|f \|_{Z_M([R,+\infty))} \|\Psi\|_{\HH([R,2R])},\label{eq:boundbeta1}
\end{align}
Similarly,
\begin{align}
\left|\left\langle \frac{1}{\alpha \beta} (k_{0}^2-1/4) \chi'_2 r^{2q-1} f, \Psi \right\rangle\right| 
    \lesssim \frac{M^{-1/2}}{\beta} \|f\|_{Z_M([R,+\infty))} \|\Psi\|_{\HH([R,2R])},\label{eq:boundbeta2}
\end{align}
and 
\begin{align}
    \left| \left\langle \left(\frac{1}{2\alpha \beta} + \frac{\alpha -1}{\alpha\beta}\right) r^{2q}(\chi'_2 f'), \Psi \right\rangle\right| \lesssim \frac{M^{-1/2}}{\beta} \|f\|_{Z_M([R,+\infty))} \|\Psi\|_{\HH([R,2R])}
    %
\label{eq:boundbeta3}
\end{align}
The terms that do not contain factors of $\beta$ can be bounded as follows
\begin{align}
|\langle (ikv(r) - \lambda) r^{2q}(\chi'_2 f'), \Psi\rangle| & \lesssim \langle r^{2q}|\chi'_2 f'|, |\Psi| \rangle   \lesssim \|f'\|_{L^2([R,2R])} \|\chi'_2 \Psi\|_{L^2([R,2R])} \\ & \lesssim \|f'\|_{L^2([R,2R])} \|r^{-1} \Psi\|_{L^2([R,2R])}  \\ & \lesssim M^{-1/2} \|f\|_{Z_M([R,+\infty))} \|\Psi\|_{\HH([R,2R])},
\end{align}
and since $(-ikv(r)-\bar c)^{-1}$ does not have singularities in $[R,2R]$ and by the properties of $\tilde T_\beta$, we have
\begin{equation}
\label{eq:psiPsi}
    \| \Psi\|_{\HH([R,2R])} \lesssim \|\psi\|_{H^1_r([0,M])}.
\end{equation}
Therefore, putting all the estimates together, we have proved that when $\beta\gg \eps^{-2}$ we have
\begin{align}
\|\psi\|_{H^1_r} & \lesssim M^{-1/2} \| f\|_{Z_M([R, +\infty))} .
\end{align}
This enables us to conclude that
\begin{equation}
\|\mathcal R_2\|_{Z_M([R, +\infty)) \rightarrow H^1_r([0, M])} = \cO(M^{-1/2}).
\end{equation}
\\
\noindent $\diamond$ \textbf{Bound for $\mathbf{\cM_{2,1}}$}: we now want to bound the operator $\cR_1: H^1_r([0,M]) \to Z_M$. 
Recalling that $\cR_1 = \rmL_{k_0}^{-1}(S_\beta-\lambda_\beta I)^{-1}\mathcal{C}_1$, we choose $f \in H^1_r([0,M]) $ and set $\psi = \cR_1(f)$, so that 
$\rmL_{k_0} \psi = (S_\beta-\lambda_\beta I)^{-1}\mathcal{C}_1(f)$. Similarly to what we did for $\cM_{1,2}$, we get
\begin{align}
\label{eq:M21}
    \langle \rmL_{k_0} \psi, \psi \rangle = \langle \mathcal{C}_1(f) , (-S_\beta-\bar \lambda_\beta I)^{-1} \psi \rangle.
\end{align}
The strategy to bound the several terms is the same as before, but the different bound available on $\chi_1$ from \cref{eq:boundschi1} and the fact that $f\in H^1_r$ plays a key role in the estimates.
The term that requires a special attention is again 
\begin{equation}
    |\langle (ikv(r) - \lambda) (\chi'_1 f'), (-S_\beta-\bar \lambda_\beta I)^{-1}  \psi\rangle|.
\end{equation}
Setting $\Psi = (-S_\beta-\bar \lambda_\beta I)^{-1}  \psi$ and integrating by parts, we see that
\begin{align}
    |\langle (ikv(r) - \lambda_\beta) (\chi'_1 f'), \Psi\rangle|\lesssim \langle |rv' \chi'_1 f|, \frac{1}{r}|\Psi| \rangle|+\langle |(ikv-\lambda_\beta) r\chi''_1 f|, \frac{1}{r}|\Psi| \rangle|+\langle |(ikv(r)-\lambda)\chi'_1 f|, |\Psi '| \rangle|.
\end{align}
Since $|rv'(r)|\lesssim 1$ and $|\chi'|+r|\chi''|\lesssim M^{-1}$, we get
\begin{align}
|\langle (ikv(r) - \lambda)\chi'_1 f', \Psi \rangle | \lesssim   M^{-1} \|f\|_{L^2} \| \Psi\|_{\dot{H}^1}\lesssim M^{-1} \|f\|_{H^1_r} \|\psi\|_{\dot{H}^1}.
\end{align}
Therefore, handling in a similar way the other terms in \cref{eq:M21}, we deduce that 
\begin{align}
    \| \psi \|_{\dot{H}^1([R,+\infty])}^2 \lesssim M^{-1} \| f\|_{H^1_r([0,M])} \|\psi\|_{\dot{H}^1([R,+\infty))}.
\end{align}
By the definition of the $Z_M$ norm in \cref{def:ZM}, this enables us to conclude that
\begin{equation}
\|\mathcal R_1\|_{H^1_r([0,M]) \rightarrow Z_M([R,+\infty))} = \cO(M^{-1/2}).
\end{equation}
\end{proof}

\medskip 

\textbf{Step 5:} we are finally left with solving the problem \cref{eq:projection}. This is equivalent to  
\begin{equation}
    \langle \varphi, \phi_0 \rangle=0
\end{equation}
where the inner product is in the space $L^2([0,M];\dd r)$. With this solvability condition we are able to determine the value of $c$. Indeed, from \cref{eq:innerouter} we have that 
\begin{equation}
    \varphi=\left((I-\cM)^{-1}\begin{pmatrix}
        \cM_{1,1}(\phi_0)\\
        \cM_{2,1}(\phi_0)
    \end{pmatrix}\right)_1=\cM_{1,1}(\phi_0) +\sum_{l= 1}^\infty\left(\cM^l\begin{pmatrix}
        \cM_{1,1}(\phi_0)\\
        \cM_{2,1}(\phi_0)
    \end{pmatrix}\right)_1.
\end{equation}
By the bounds in \cref{prop:boundsM}, we get 
\begin{align}
\label{eq:zero_projection}
    \langle \varphi,\phi_0\rangle_{L^2}=\,&\langle \cM_{1,1}(\phi_0),\phi_0\rangle+\mathcal{O}(\eps^2 M^4+M^{-1})+ M^4\smallO_\beta(1)\\
    \notag =\,&\eps \bigg\langle(I-\cK)^{-1}\rmL_{k_0}^{-1}\left(\frac{\phi_0}{r^2}+\frac{\tilde{c}g'(r)\phi_0}{r(v(r)-c)(v(r)-v(1))}\right),\phi_0\bigg\rangle\\
    \notag &+\big\langle(I-\cK)^{-1}\rmL_{k_0}^{-1}\big(T_\beta\big(ik\frac{g'(r)}{r}\phi_0\big),\phi_0\big\rangle+\mathcal{O}(\eps^2 M^4+M^{-1})+ M^4\smallO_\beta(1).
\end{align}
Then, we observe that $(I-\cK)^{-1}\rmL_{k_0}^{-1}$ is self-adjoint in $L^2(\dd r)$. Indeed, let $f, g\in L^2$ be given and $h=(I-\cK)^{-1}\rmL_{k_0}^{-1}(f)$, $q=(I-\cK)^{-1}\rmL_{k_0}^{-1}(g)$. Then $h$ is the unique solution to $\rmL_{k_0}(I-\cK) h=f$ and analogously for $q$. Since the operator $\rmL_{k_0}(I-\cK)= \rmL_{k_0}-(A(r)+\mathbb{P})$ is self-adjoint, one has 
\begin{equation}
    \langle (I-\cK)^{-1}\rmL_{k_0}^{-1} (f),g\rangle =\langle h,\rmL_{k_0}(I-\cK)(q)\rangle=\langle \rmL_{k_0}(I-\cK) (h),q\rangle=\langle  f,(I-\cK)^{-1}\rmL_{k_0}^{-1}(g)\rangle,
\end{equation}
thus proving that $(I-\cK)^{-1}\rmL_{k_0}^{-1}$ is self-adjoint. Moreover, by the definition of $\phi_0$ and $\mathbb{P}$ in \cref{eq:phi_0} and \cref{eq:proj} respectively, we know that $\mathbb{P}(\phi_0)=\phi_0$ and 
\[
\phi_0=-(I-\cK)^{-1}\rmL_{k_0}^{-1}(\phi_0).
\]
Therefore, recalling that $\|\phi_0/r\|_{L^2}=1$, we can rewrite \cref{eq:zero_projection} as 
\begin{equation}
    \langle \varphi,\phi_0\rangle =-\eps(1+\tilde{c}\Gamma_1(c))+\Gamma_2(\beta)+\mathcal{O}(\eps^2 M^4+M^{-1})+ M^4\smallO_\beta(1)
\end{equation}
where we denote 
\begin{align}
    \Gamma_1(c)=\left\langle \frac{g'(r)}{r(v(r)-c)(v(r)-v(1))}\phi_0,\phi_0 \right\rangle, \qquad \Gamma_2(\beta)=\left\langle T_\beta\big ( ik \frac{g'(r)}{r}\phi_0\big),\phi_0 \right\rangle.
\end{align}
Recalling the definition of $A(r)$ in \cref{def:A} and $h_\eps$ in \cref{lemma:approximation_denominator}, we rewrite $\Gamma_1(c)$  as 
\begin{align}
    \Gamma_1(c)=\, \int_{0}^{M}\frac{1}{v'(1)(r-1)-\eps \tilde c}A(r)|\phi_0|^2(r)\dd r+\int_0^{M}h_\eps(r)A(r)|\phi_0|^2(r)\dd r.
\end{align}
Thanks to \cref{lemma:approximation_denominator} and properties of $\phi_0, A$, the last integral is bounded and its imaginary part converges to zero as $\eps\to 0$. Moreover, since $\phi_0\in C^{0,1/2}([0,R_2])$ by Morrey's inequality and $A(r)\to \gamma_1/v'(1)$ as $r\to 1$, we can invoke the Sochocki-Plemelj formula and deduce that there exists a constant $\kappa$ such that 
\begin{equation}
    \lim_{\eps\to 0} \Gamma_1(c(\eps))= \lim_{\eps\to 0} \Gamma_1(v(1)+\eps \tilde c)=\kappa+i\pi \gamma_1\frac{|\phi_0(1)|^2}{|v'(1)|^2}=:z_0.
\end{equation}
In view of \cref{prop:conv_Sbeta} and the fact that $\phi_0\in H^1_r$, we know that $\Gamma_2(\beta)=\smallO_\beta(1)$.
Therefore
\begin{align}
        \langle \varphi,\phi_0\rangle&=-\eps (\tilde c z_0+1)-\eps \tilde c (\Gamma_1(v(1)+\eps\tilde c)-z_0)+\mathcal{O}(\eps^2 M^4+M^{-1})+ M^4\smallO_\beta(1).
\end{align}
We denote
\begin{align}
f(\tilde c)&=-\eps(\tilde c z_0+1), \\
g(\tilde c, \eps,\beta,M)&=-\eps \tilde c (\Gamma_1(v(1)+\eps\tilde c)-z_0)+\mathcal{O}(\eps^2 M^4+M^{-1})+ M^4\smallO_\beta(1) \\ & 
= \mathcal{O}(\eps^2 M^4+M^{-1})+ M^4\smallO_\beta(1).
\end{align}
Then, we know that  $f(\tilde c)$ has a unique zero in $B_{1/|2z_0|}(- z_0^{-1})$ and this ball is 
 fully contained in the upper half plane since $\Im(-z_0^{-1})>0$ and the radius is sufficiently small. By choosing $\eps$ sufficiently small and $\beta,M$ sufficiently large, one can prove that $|g(\tilde c,\eps,\beta,M)|<|f(\tilde c)|$ for all $\tilde c\in B_{1/|2z_0|}(- z_0^{-1})$. Hence, Rouché's Theorem implies that there exists a unique zero for $f+g(\cdot,\eps,\beta,M)$ in $B_{1/|2z_0|}(z_0^{-1})$, that with a slight abuse in notation we still denote as $\tilde c^*$ . Hence, we have found the desired eigenvalue $c=v(1)+\eps \tilde c$ since  $\Im(c)=\eps \Im(\tilde c^*)>0 $.
\qed 

\section{Expansion of the resolvent of $S_\beta$}
\label{sec:proof_inverse}
In this section, we aim at proving \cref{prop:conv_Sbeta}; we refer to \cref{sec:functionspace} for the definition of the norms, and we simplify the notation by only using $L^2, \HH, H^1_r$.

First of all, we express the resolvent via the Laplace transform of the semigroup associated to the skew-adjoint operator $S_\beta$, namely for all $z$ such that $\mathrm{Re}(z)>0$ we have
\begin{equation}
\label{eq:S_beta_integral}
(S_\beta-zI)^{-1}(w)=-\int_0^{\infty}\e^{-z \tau}\e^{\tau S_\beta }(w) \dd \tau,
\end{equation}
where $w$ is a given function in $L^2$, $H^1$ or $\HH$. We also denote $f_\beta \coloneqq \e^{\tau S_\beta }(w)$, which solves 
\begin{equation}
\label{eq:S_beta}
\begin{cases}
    \partial_\tau f_\beta+ ikv(r)f_\beta-\frac{1}{\alpha\beta}(r\de_r+\frac12)f_\beta = 0\\
    f_\beta|_{\tau = 0} = w.
\end{cases}
\end{equation}
We define $f_\infty \coloneqq\e^{\tau S_\infty}(w)$ where we recall that $S_\infty=-ikv(r)$. The goal of \cref{prop:conv_Sbeta} is to compare $f_\beta$ and $f_\infty$. To this end, observe that $f_\beta, f_\infty$ are explicitly given by 
\begin{equation}
\label{eq:fbetainfty}
    f_\beta(r,\tau)=\e^{\frac{\tau}{2\alpha\beta}-ik\int_0^\tau v\big(\e^{\frac{\tau-s}{\alpha\beta}}r\big)\dd s}w(\e^{\frac{\tau}{\alpha\beta}}r) 
,\qquad f_\infty(r,\tau)=\e^{-ik\tau v(r)}w(r).
\end{equation}
Then, we can split the solution to \cref{eq:S_beta} as 
\begin{equation}
    f_\beta= f_\infty+(f_\beta-f_\infty)
\end{equation}
and rewrite  
\begin{equation}
\label{eq:op_integral}
(S_\beta-z I)^{-1}(w)=-\int_0^{\infty}\e^{-(ikv(r)+z)\tau}(w) \dd \tau-T_{z,\beta}(w)=-(S_\infty-zI)^{-1}(w)-T_{z,\beta}(w),
\end{equation}
where we finally define the operator
\begin{equation}
\label{def:T_beta}
    T_{\beta}(w)\coloneqq T_{z,\beta}(w)\coloneqq\int_0^{\infty}\e^{-z\tau}
({f}_\beta-f_\infty)(\tau)\dd \tau
\end{equation}
Notice that \cref{eq:op_integral} is exactly the expansion stated in \cref{prop:conv_Sbeta}. 
It thus remain to show the bounds we have claimed on $T_{\beta}$, which are a consequence of the pointwise in time estimates of the integrand which we collect below. 
\begin{lemma}
\label{lemma:small_tau}
    Let $f_\beta, f_\infty$ be given as in \cref{eq:fbetainfty} with $w\in X\in \{L^2,\dot{H}^1,H^1_r\}$. Then, for any $T>0$ 
    \begin{equation}
        \lim_{\beta\to \infty}\|(f_\beta-f_\infty)(\tau)\|_{X}=0, \qquad \text{for all } 0\leq \tau \leq T \text{ and } X\in \{L^2,\, \dot{H}^1, \, H^1_r\}.
    \end{equation}
\end{lemma}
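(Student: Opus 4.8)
The plan is to use the explicit formulas \cref{eq:fbetainfty} for $f_\beta$ and $f_\infty$ and to show that, for fixed $\tau$ in a bounded interval, each discrepancy between the two expressions is $\smallO_\beta(1)$ in the relevant norm. I would write
\[
(f_\beta-f_\infty)(\tau) = \underbrace{\Big(\e^{\frac{\tau}{2\alpha\beta}}-1\Big)\e^{-ik\int_0^\tau v(\e^{\frac{\tau-s}{\alpha\beta}}r)\dd s}w(\e^{\frac{\tau}{\alpha\beta}}r)}_{(\mathrm{I})}
+ \underbrace{\Big(\e^{-ik\int_0^\tau v(\e^{\frac{\tau-s}{\alpha\beta}}r)\dd s}-\e^{-ik\tau v(r)}\Big)w(\e^{\frac{\tau}{\alpha\beta}}r)}_{(\mathrm{II})}
+ \underbrace{\e^{-ik\tau v(r)}\Big(w(\e^{\frac{\tau}{\alpha\beta}}r)-w(r)\Big)}_{(\mathrm{III})}.
\]
For $(\mathrm{I})$ the prefactor $\e^{\tau/(2\alpha\beta)}-1 = \cO(\tau/\beta)$ uniformly on $[0,T]$, the phase has modulus one, and the dilation $w(\e^{\tau/(\alpha\beta)}r)$ is bounded in each of $L^2(\dd r)$, $\dot H^1(\dd r)$, $H^1_r(\dd r)$ by a constant uniform for $\beta$ large (a rescaling $r\mapsto \e^{\tau/(\alpha\beta)}r$ with Jacobian $\e^{\tau/(\alpha\beta)}\to 1$; for the $\dot H^1$ and $H^1_r$ norms one also uses that $r\de_r$ and the weight $r^{-1}$ are scale invariant). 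Hence $\|(\mathrm{I})\|_X = \cO_T(1/\beta)$.

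For $(\mathrm{II})$ I would use $|\e^{i a}-\e^{ib}|\le|a-b|$ together with the bound
\[
\Big|\int_0^\tau v(\e^{\frac{\tau-s}{\alpha\beta}}r)\dd s-\tau v(r)\Big| \le \int_0^\tau \big|v(\e^{\frac{\tau-s}{\alpha\beta}}r)-v(r)\big|\dd s \le \tau\, \|r v'\|_{L^\infty}\,\frac{\tau}{\alpha\beta},
\]
since $|v(\e^{h}r)-v(r)|\le |h|\,\|\rho v'(\rho)\|_{L^\infty}$ and $v\in C^\infty$ with $rv'$ bounded (from \cref{def:v} and $g\in C^\infty_c$). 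Thus the scalar factor in $(\mathrm{II})$ is $\cO_T(1/\beta)$ uniformly in $r$, while $w(\e^{\tau/(\alpha\beta)}r)$ is again bounded in $X$; for the $\dot H^1$ and $H^1_r$ norms one must differentiate the phase, which produces an extra factor $ik\tau\de_r(\cdots)$ of size $\cO_T(1)$ pointwise (again controlled by $\|rv'\|_{L^\infty}$), so after multiplying by the $\cO(1/\beta)$ amplitude the contribution is still $\smallO_\beta(1)$. Care is needed because differentiating $(\mathrm{II})$ also hits the amplitude, but every term carries at least one factor $1/\beta$.

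For $(\mathrm{III})$ the only genuinely non-quantitative step appears: continuity of dilations on Sobolev spaces, $\|w(\e^{h}\cdot)-w(\cdot)\|_X\to 0$ as $h\to 0$, for $X\in\{L^2,\dot H^1,H^1_r\}$. This holds by density of smooth compactly supported functions together with the uniform boundedness of the dilation operators established above (the standard $3\varepsilon$ argument). This is the step I expect to be the main obstacle, mostly because of the weighted norms $H^1_r$ and $\dot H^1$: one should check that $r^{-1}$ and $r\de_r$ interact well with the dilation (they do, being homogeneous of degree $0$ and $0$ respectively under $r\mapsto e^h r$), so the continuity reduces to the unweighted $L^2$ statement applied to $w$, $\de_r w$ (resp.\ $r\de_r w$), $r^{-1}w$. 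Collecting $(\mathrm{I})$, $(\mathrm{II})$, $(\mathrm{III})$ gives $\|(f_\beta-f_\infty)(\tau)\|_X\to 0$ for each fixed $\tau\in[0,T]$, which is the claim; the dominated-convergence step needed to pass from this to $\|T_\beta\|_{X\to X}\to0$ is carried out afterwards in \cref{sec:proof_inverse} and is not part of this lemma.
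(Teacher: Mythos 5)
Your proposal is correct, and it rests on the same ingredients as the paper's proof: the explicit formulas \cref{eq:fbetainfty}, the boundedness of $rv'$ coming from \cref{def:v} and $g\in C^\infty_c$, the scale-invariance of $r\de_r$ and $r^{-1}$ under dilations, and the pairing of the phase derivative (which costs a factor $r^{-1}$) with $r^{-1}w\in L^2$. The difference is one of packaging: the paper writes $d_\beta[w]=f_\beta-f_\infty$, commutes $r^{-1}$, $\de_r$ and $r\de_r$ through the formula to reduce everything to $d_\beta$ applied to $r^{-1}w$, $\de_r w$, etc., plus explicit correction terms, and then concludes by pointwise convergence together with dominated convergence; you instead split into the three terms (I)--(III) and estimate the first two quantitatively in $\cO_T(1/\beta)$, isolating the only genuinely soft step in (III), the continuity of dilations on $X$, which you prove by density. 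Your route is, if anything, slightly more careful on this last point: for a general $w\in L^2$ the dilation $w(\e^{\tau/(\alpha\beta)}r)$ need not converge to $w(r)$ pointwise a.e., so the density/$3\varepsilon$ argument you invoke is the correct justification underlying the paper's dominated-convergence step. One small imprecision: in (II) the $r$-derivative of the phase is of size $\cO_T(r^{-1})$ rather than $\cO_T(1)$, but as you implicitly use elsewhere this is harmless because it pairs with $r^{-1}w\in L^2$ (this is exactly the structure of the last term in the paper's formula for $\de_r d_\beta[w]$). No gap.
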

\begin{proof}
 We denote 
 \[
 d_\beta[w](r,\tau) \coloneqq (f_\beta-f_\infty)(r,\tau)=\e^{\frac{\tau}{2\alpha\beta}-ik\int_0^\tau v\big(\e^{\frac{\tau-s}{\alpha\beta}}r\big)\dd s}w(\e^{\frac{\tau}{\alpha\beta}}r) 
-\e^{-ik\tau v(r)}w(r).
 \]
    Then $d_\beta[w](r,\tau)\to 0$ pointwise as $\beta\to \infty$ and $|d_\beta[w](r,\tau)|\leq \e^{\frac{\tau}{2\alpha\beta}}|w(\e^{\frac{\tau}{\alpha\beta}}r)|+|w(r)|\coloneqq g_\beta(r,\tau)$. Since $\|g_\beta(\tau)\|_{L^2(dr)}\leq 2\|w\|_{L^2(dr)}$,  by dominated convergence theorem we conclude that if $w\in L^2$ we have $\|d_\beta(\tau)\|_{L^2} \to 0$ as $\beta \to \infty$ uniformly in $\tau$. To control a piece of the $\dot{H}^1$ norm, observe that 
    \begin{equation}
        r^{-1}d_\beta[w](r,\tau)=d_\beta[r^{-1}w](r,\tau)+\e^{\frac{\tau}{2\alpha\beta}-ik\int_0^\tau v\big(\e^{\frac{\tau-s}{\alpha\beta}}r\big)\dd s}\big(\e^{\frac{\tau}{\alpha \beta}}-1\big)(r^{-1}w)(\e^{\frac{\tau}{\alpha\beta}}r) 
    \end{equation}
    We can then argue as before to conclude that for any fixed $T>0$, if $r^{-1}w\in L^2$ then $\|r^{-1}d_\beta(\tau)\|_{L^2}\to 0$ as $\beta \to \infty$ for all $\tau\in [0,T]$. To control the derivative in $r$, notice that
    \begin{align}
        &\de_r d_\beta[w](r,\tau)=d_\beta[\de_rw](r,\tau)+\e^{\frac{\tau}{2\alpha\beta}-ik\int_0^\tau v\big(\e^{\frac{\tau-s}{\alpha\beta}}r\big)\dd s}\big(\e^{\frac{\tau}{\alpha \beta}}-1\big)(\de_rw)(\e^{\frac{\tau}{\alpha\beta}}r) \\
        &+\e^{ik\tau v(r)} ikr\tau v'(r) (r^{-1}w)(r)-ik\Big(\int_0^\tau (rv')(\e^{\frac{\tau-s}{\alpha \beta}}r)\dd s\Big)\e^{\frac{\tau}{2\alpha\beta}-ik\int_0^\tau v\big(\e^{\frac{\tau-s}{\alpha\beta}}r\big)\dd s}r^{-1}w(\e^{\frac{\tau}{\alpha\beta}}r).
    \end{align}
    From the identity above, since $rv'\in L^\infty$, it is not hard to deduce that for any fixed $T>0$,  if $w\in \dot{H}^1$ then $\|\de_rd_\beta(\tau)\|_{L^2}\to0$ as $\beta\to \infty$ for all $\tau\in [0,T]$.

    For the bound of the $H^1_r$ norm, we only have to control the $r\de_rd_\beta$ term, which readily follows by analogous arguments we have presented for $r^{-1}d_\beta$ and $\de_rd_\beta$ and therefore we omit the proof.
\end{proof}
\begin{remark}
    One can easily prove the result in \cref{lemma:small_tau} also for higher derivatives. However, one should not expect to obtain convergence in $\dot{H}^\beta$ for instance. Indeed, if we take a number of derivatives comparable to $\beta$, then we lose a uniform control with respect to $\beta$. This was foreseen, due to the concentration caused by  the $r\de_r$ term.
\end{remark}
With \cref{lemma:small_tau} at hand, we are finally ready to prove \cref{prop:conv_Sbeta}.
\begin{proof}[Proof of \cref{prop:conv_Sbeta}]
Let us consider $w \in X$ with $X\in \{L^2, \dot{H}^1,H^1_r\}$ and $\|w\|_X=1$. By the definition of $T_{\beta}$ given in \ref{def:T_beta}, we have:
\begin{equation}
\|T_{\beta} w\|_{X} \leq \int_0^T e^{-\Re(z)\tau} \| (f_\beta - f_\infty)(\tau)\|_{X} d\tau + \int_T^\infty e^{-\Re(z)\tau}  \big(\| f_\beta(\tau)\|_{X} +\|f_\infty(\tau)\|_{X}\big)d\tau
\end{equation}
for some $T>1$ to be specified later. For the second integral, we use the rough bound
\begin{equation}
    \| f_\beta(\tau)\|_{X} +\|f_\infty(\tau)\|_{X}\lesssim (1+\tau)\e^{\frac{\tau}{\alpha\beta}}.
\end{equation}
Since $T>1 $ and $\Re(z)>0$, we know that for all $\beta>\beta_0$ with $\beta_0>2/(\alpha\Re(z))$ we have
\begin{equation}
    \int_T^\infty e^{-\Re(z)\tau}  \big(\| f_\beta(\tau)\|_{X} +\|f_\infty(\tau)\|_{X}\big)d\tau\lesssim \int_T^{\infty}\tau \e^{-(\Re(z)-\frac{1}{\alpha\beta})\tau}\dd \tau\leq\frac{C}{\Re(z)}\e^{-\frac12\Re(z)T}.
\end{equation}
Hence
\begin{equation}
    \|T_{\beta} w\|_{X} \leq \frac{1}{\Re(z)}\sup_{\tau\in [0,T]}\|(f_\beta-f_\infty)(\tau)\|_X+C\frac{1}{\Re(z)}\e^{-\frac12\Re(z)T}.
\end{equation}
For any $\eps>0$, we choose $T$ large enough so that the second term is of size $\eps/2$ and use \cref{lemma:small_tau} to conclude that there exists a $\beta_0$ sufficiently large such that for all $\beta\geq \beta_0$ we have  $\|T_\beta\|_{X\to X}\leq \eps$, thus proving the desired result.
\end{proof}

\bigskip 

\subsection*{Acknowledgements} The authors would like to D. Albritton and M. Colombo for useful discussions about the result, and the anonymous referees for valuable comments and suggestions. The research of MD and GM was supported by the Swiss State Secretariat for Education, Research and lnnovation (SERI) under contract number MB22.00034 through the project TENSE. MD was supported also by the Swiss National Science Foundation (SNF Ambizione grant PZ00P2\_223294) and is a member of the GNAMPA-INdAM.

\bibliographystyle{siam}
\bibliography{Nonuniqueness2d}

\end{document}